\documentclass{amsart}
\usepackage{amsbsy,amssymb,amscd,amsfonts,latexsym,amstext,delarray, amsmath,graphicx,color,caption,enumerate,lscape,stackrel, stackengine,mathtools,leftidx}
\usepackage{graphicx}
\usepackage{caption}
\usepackage{subcaption}
\usepackage{epsfig}
\usepackage{tikz,tikz-cd}
\usetikzlibrary{matrix,calc,arrows,decorations.markings,patterns,calc,decorations.pathmorphing}
\input xy

\xyoption{all}
\pagestyle{plain}
\usepackage{euscript}
\usepackage{multirow}
\usepackage{etex, pictexwd,dcpic}
\usepackage[makeroom]{cancel}

\topmargin -1in
\headheight 0in
\headsep 0in
\textheight 9.5in
\textwidth 7.1in
\oddsidemargin 0in
\evensidemargin 0in
\headheight 24pt
\headsep 0.25in
\hoffset=-0.8cm

\newtheorem{theorem}{Theorem}
\newtheorem{definition}[theorem]{Definition}
\newtheorem{proposition}[theorem]{Proposition}
\newtheorem{lemma}[theorem]{Lemma}
\newtheorem{corollary}[theorem]{Corollary}
\newtheorem{example}[theorem]{Example}

\newtheorem{remark}[theorem]{Remark}
\def\F{{\mathcal F}}

\newcommand{\db}[1]{\mathopen{\{\!\!\{}#1\mathopen{\}\!\!\}}}

\newcommand\oast{\stackMath\mathbin{\stackinset{c}{0ex}{c}{0ex}{\ast}{\bigcirc}}}

\newcommand{\git}{\mathbin{
  \mathchoice{/\mkern-6mu/}
    {/\mkern-6mu/}
    {/\mkern-5mu/}
    {/\mkern-5mu/}}}

\tikzset{
->-/.style={decoration={
  markings,
  mark=at position .5 with {\arrow{>}}},postaction={decorate}},
->--/.style={decoration={
  markings,
  mark=at position .3 with {\arrow{>}}},postaction={decorate}},
-->-/.style={decoration={
  markings,
  mark=at position .7 with {\arrow{>}}},postaction={decorate}},
-<-/.style={decoration={
  markings,
  mark=at position .5 with {\arrow{<}}},postaction={decorate}},
-<--/.style={decoration={
  markings,
  mark=at position .3 with {\arrow{<}}},postaction={decorate}},
--<-/.style={decoration={
  markings,
  mark=at position .7 with {\arrow{<}}},postaction={decorate}},
--->-/.style={decoration={
  markings,
  mark=at position .85 with {\arrow{>}}},postaction={decorate}},
---->-/.style={decoration={
  markings,
  mark=at position .95 with {\arrow{>}}},postaction={decorate}},
----->-/.style={decoration={
  markings,
  mark=at position .97 with {\arrow{>}}},postaction={decorate}},
subset/.style={
    draw=none,
    every to/.append style={
      edge node={node [sloped, allow upside down, auto=false]{$\subset$}}}
  },
equal/.style={
    draw=none,
    every to/.append style={
      edge node={node [sloped, allow upside down, auto=false]{$=$}}}
  }
assign/.style={
    draw=none,
    every to/.append style={
      edge node={node [sloped, allow upside down, auto=false]{$=$}}}
  }
}
\tikzset{
  symbol/.style={
    draw=none,
    every to/.append style={
      edge node={node [sloped, allow upside down, auto=false]{$#1$}}}
  }
}

\title{Noncommutative Networks on a Cylinder}

\author{S.~Arthamonov}
\address{S.A.: Department of Mathematics, University of California, Berkeley, CA 94720 USA}
\email{artamonov@berkeley.edu}

\author{N.~Ovenhouse}
\address{N.O.: School of Mathematics, University of Minnesota, Minneapolis, MN 55455 USA}
\email{ovenh001@umn.edu}

\author{M.~Shapiro}
\address{M.S.: Department of Mathematics, Michigan State University, East Lansing, MI 48824 USA and National Research University Higher School of Economics, Moscow, Russia}
\email{mshapiro@msu.edu}

\begin{document}

\begin{abstract}
    In this paper a double quasi Poisson bracket in the sense of Van den Bergh is constructed on the space of
    noncommutative weights of arcs of a directed graph embedded in a disk or cylinder $\Sigma$, which gives rise to the quasi Poisson bracket of G.Massuyeau and V.Turaev
    on the group algebra $\mathbf k\pi_1(\Sigma,p)$ of the fundamental group of a surface based at $p\in\partial\Sigma$.
    This bracket also induces a noncommutative Goldman Poisson bracket on the \textit{cyclic space} $\mathcal C_\natural$, which is a $\mathbf k$-linear space of unbased loops.
    We show that the induced double quasi Poisson bracket between boundary measurements can be described via noncommutative $r$-matrix formalism.
    This gives a more conceptual proof of the result of \cite{Ovenhouse'2020} that traces of powers of Lax operator form an infinite collection of
    noncommutative Hamiltonians in involution with respect to noncommutative Goldman bracket on $\mathcal C_\natural$.
\end{abstract}

\maketitle

\section{Introduction}

The current manuscript is obtained as a continuation of papers \cite{Ovenhouse'2020,DiFrancesco'2009,DiFrancesco'2015,BerensteinRetakh'2011}
where the authors develop noncommutative generalizations of discrete completely integrable dynamical systems
and \cite{BerensteinRetakh'2018, GoncharovKontsevich'2021} where a large class of noncommutative cluster algebras was constructed.
Cluster algebras were introduced in \cite{fomin2002cluster} by S.Fomin and A.Zelevisnky in an effort to describe the (dual) canonical basis
of universal enveloping algebra ${\mathcal U}({\mathfrak b})$, where $\mathfrak b$ is a Borel subalgebra of a simple complex Lie algebra $\mathfrak g$.
Cluster algebras are commutative rings of a special type, equipped with a distinguished set of generators (cluster variables) subdivided into overlapping subsets
(clusters) of the same cardinality subject to certain polynomial relations (cluster transformations).
In the last twenty years the theory of cluster algebras experienced an explosive development motivated by discovered connections of cluster theory with topology,
integrable systems, theory of positivity, representation theory, etc.

One of the most exciting development is an application of cluster theory to integrable systems. It was shown, for instance,  in \cite{okubo2013discrete}
that discrete Hirota integrable systems are closely related to special sequences of cluster transformations (cluster evolution).
On the other hand, A.Postnikov introduced in \cite{Postnikov'2006} a convenient way to describe some families of cluster coordinates on Grassmannian
in terms of oriented graphs embedded in a disk with weighted faces. In \cite{GSV'2009} and \cite{GSV'2012} the third author, along with M.Gekhtman and A.Vainshtein,
defined a family of Poisson brackets on the space of weights of Postnikov's directed graphs, and showed that the entries of Postnikov's \emph{boundary measurement matrix}, $B$, satisfy Sklyanin $r$-matrix Poisson relations:
\[ \{B(\lambda), B(\mu)\} = \left[ r(\lambda,\mu), \, B(\lambda) \otimes B(\mu) \right] \]
Here, $r(\lambda,\mu)$ is the standard trigonometric $r$-matrix for $SL(N)$.
One of the main results of the current paper is to give a noncommutative generalization of this statement.

A generalization of Postnikov's construction to oriented graphs embedded in a torus leads to
coordinates on affine group. This case is equivalent to considering dimer models on a bipartite graph $\Gamma$ on a torus considered in
\cite{goncharov2013dimers}, which gives rise to a cluster integrable system. The phase space contains, as an open dense subset, the moduli space of line bundles
with connection on the graph $\Gamma$. The complexification of the phase space is birationally isomorphic to a finite cover of the Beauville complex algebraic
integrable system related to a toric surface. In a particular case of a square bipartite graph on a torus, the construction above is equipped additionally with
a celebrated pentragram discrete integrable system commuting with Hamiltonians of the cluster integrable system.

The pentagram map was introduced earlier by R.Schwartz \cite{Schwartz'1992} in a completely different geometric model  as a map on the space of (``twisted'') polygons in the projective plane.
The pentagram map is  equivariant with respect to projective transformations, hence one can consider it on the projective classes of twisted polygons. Iterations of pentagram map generate discrete dynamical system on the phase space of projective classes of twisted polygons in $\mathbb RP^2$.
It was proved in \cite{OvsienkoSchwartzTabachnikov'2010,GSTV'2016} that the pentagram map is completely integrable in the Liouville sense;
i.e., the space of projective classes of polygons can be equipped with a Poisson structure invariant under the pentagram map, and there are a sufficient number of integrals in involution with respect to this Poisson bracket. Using \cite{glick2011pentagram} the pentagram
map and some of its generalizations were represented as a cluster dynamics of weights for a square bipartite graph on a cylinder in \cite{GSTV'2016} where also the complete integrability of all these systems are proved.

Later,  G.Mar\'{i}-Beffa and R.Felipe \cite{MariBeffaFelipe'2019} generalized the construction of the pentagram map to the space of twisted polygons in Grassmannians and found the Lax form of the pentagram transformation, and A.Izosimov \cite{Izosimov'2018} described an invariant Poisson bracket for this dynamical system and showed that the pentagram map is an example of refactorizational dynamics in an affine group.

The Grassmann pentagram map was reformulated by N.Ovenhouse in terms of a discrete dynamics on the space of matrix-valued weights on the same graphs embedded in a torus  \cite{Ovenhouse'2020}. The corresponding dynamics was generalized to the space of weight with values in a noncommutative free division domain $\F$.

In particular, he constructed an invariant noncommutative (Goldman type) Poisson bracket on the cyclic space $\F/[\F,\F]$ and found an infinite sequence of integrals in involution with respect to the Poisson bracket in the form of traces of powers of the Lax matrix.
One of the main difficulties of the approach in \cite{Ovenhouse'2020} came from the fact that the noncommutative Poisson bracket was defined with values in the cyclic space only, which is not an algebra, and therefore the Leibniz identity is not applicable.
To prove involutivity of the infinite family of hamiltonians, Ovenhouse used clever topological arguments using the topological nature of this Goldman type Poisson bracket.
Notice that in commutative case (for the actual pentagram map) the involutivity of traces of powers of the Lax matrix (i.e., coefficients of the spectral curve) follows for free from the fact that the corresponding invariant Poisson bracket is an $r$-matrix bracket (\cite{GSV'2012,GSTV'2016}).
It is a classical result  \cite{ReymanSemenov-Tian-Shansky'1994} that coefficients of the spectral curve are in involution for an $r$-matrix Poisson bracket.

The main goal of the current paper is to introduce an $r$-matrix formalism for the corresponding noncommutative Poisson bracket.
Based on the previous works \cite{Arthamonov'2018, ArthamonovRoubtsov, MassuyeauTuraev'2014, VandenBergh'2008}, we introduce in this paper an $r$-matrix noncommutative double bracket in the sense of Van den Bergh on the space of noncommutative weights of a directed graph embedded in a disk, or a cylinder. The main advantage of the double algebra is that the corresponding double bracket satisfies Leibniz relations which make computations much more transparent. The $r$-matrix formalism allows to give more algebraic and conceptual proofs of the results of \cite{Ovenhouse'2020}.

The plan of the paper is the following.

In section 2 the double quasi Poisson bracket on the space of open arcs of a graph on a general 2D surface is introduced.
In section 3 the expressions for the double bracket on graphs in a disk or a cylinder are computed.
In section 4 its $r$-matrix formulation is defined.
In section 5 we define an infinite family of Hamiltonians in involution and compute explicit Lax form of the corresponding continuous Hamiltonian system. In section 6 we describe sufficient conditions for an $r$-matrix to define a double quasi Poisson bracket. Finally, in section 7 we show that some of the classical results about refactorization dynamics are generalized  to noncommutative case.

\section*{Acknowledgements}

S.A. is grateful to A.~Alexeev, A.~Berenstein, and M.~Kontsevich for many fruitful discussions and to V.~Retakh and V.~Roubtsov
for thesis advising \cite{Arthamonov'2018} and long collaboration \cite{ArthamonovRoubtsov}, which resulted in starting formulas
of Section \ref{sec:DoubleBracketForOpenArcs}. S.A. is also grateful to IHES, Michigan State University, University of Angers,
and University of Geneva for hospitality during my visits.
M.S. is grateful to the following institutions and programs: Research Institute for Mathematical Sciences, Kyoto (Spring 2019),
Research in Pairs Program at the Mathematisches Forschungsinstitut Oberwolfach (Summer 2019), Mathematical Science Research Institute,
Berkeley (Fall 2019) for their hospitality and outstanding working conditions they provided. S.A. was partially supported by RFBR-18-01-00926 and RFBR-19-51-50008-YaF.
N.O. was partially supported by the NSF grant DMS-1745638.
M.S. was partially supported by the NSF grant DMS-1702115 and partially supported by International Laboratory of Cluster Geometry NRU HSE, RF Government grant, ag. № 075-15-2021-608 from 08.06.2021.

\section{Double Brackets for Open Arcs on Oriented Surfaces}

\subsection{A Review of Constructive Approach to Goldman Brackets}

For an oriented surface $\Sigma$ and a linear algebraic group $G$ we can associate the character variety
\begin{align*}
\mathrm{Hom}(\pi_1(\Sigma),G)\git G.
\end{align*}
It is well-known that the smooth locus of the character variety can be equipped with a Poisson structure \cite{AtiyahBott'1983, Goldman'1986, GuruprasadHuebschmannJeffreyWeinstein'1997}. Moreover, it was shown in \cite{Goldman'1986} that the Poisson bracket on the character variety can be computed from the Lie bracket on the vector space freely generated by homotopy equivalence classes of loops on $\Sigma$.

An elegant way to define the Lie bracket on, generally infinite dimensional, vector space of loops was suggested by G.~Massuyeau and V.~Turaev in \cite{MassuyeauTuraev'2014}.
To this end they have defined a double quasi Poisson bracket on the group algebra $A=\mathbf k\pi_1(\Sigma,p)$ of the fundamental group of a surface based at $p\in\partial\Sigma$.
A major technical advantage of the double bracket \cite{VandenBergh'2008} compared to the Lie bracket on the space of loops, is that a double bracket can be defined on a finite number of generators of $A$ and then be extended to $A\otimes A$ by some form of Leibniz identity. By construction, double brackets induce the Lie bracket on the cyclic space $A_\natural=A/[A,A]$.

The construction of G.~Massuyeau and V.~Turaev can be easily generalized to the case of a surface with several marked points on the boundary.
This was done in \cite{Arthamonov'2018} (see also \cite{ArthamonovRoubtsov}). This generalization is especially convenient in application to the
cluster algebras of tagged triangulations of 2d surfaces \cite{FominShapiroThurston'2008}, because it allows one to define mutations locally in terms of open arcs.
This idea was used in \cite{Arthamonov'2018} to define a noncommutative analogue of the spider move and in \cite{Ovenhouse'2020} in application to the noncommutative pentagram map.

\subsection{Double Quasi Poisson Brackets for Open Arcs}
\label{sec:DoubleBracketForOpenArcs}

Let $\Sigma$ be an oriented surface with nonempty boundary and $n>1$ marked points $p_1,\dots,p_n\in\partial\Sigma$ on the boundary.
Denote by $\mathcal C=\mathbf k\pi_1(\Sigma,p_1,\dots,p_n)$ the $k$-linear category generated by paths starting and terminating at
one of the marked points. Following the standard convention in topology we denote concatenation of (linear combinations of) paths
simply by juxtaposition\footnote{Note that concatenation of paths is commonly written in the opposite order to the composition of morphisms in $\mathcal C$, i.e. $fg=g\circ f$.}
\begin{align}
\mu:\quad \mathrm{Hom}(A,B)\otimes\mathrm{Hom}(B,C)\rightarrow \mathrm{Hom}(A,C),\qquad x \otimes y \mapsto xy.
\label{eq:ConcatenationLinear}
\end{align}
We extend concatenation (\ref{eq:ConcatenationLinear}) on arbitrary tensor products of componentwise composable paths as
\begin{align*}
(f\otimes g)(h\otimes m)=(fh\otimes gm).
\end{align*}
In addition, throughout the text we denote by $\tau$ the transposition of tensor factors $(f\otimes g)^\tau=g\otimes f$ for all $f,g\in\mathcal C$.

Together with a category $\mathcal C$ we can associate a \textit{cyclic space} $\mathcal C_\natural$, which is a $\mathbf k$-linear space of unbased loops
\begin{align}
\mathcal C_\natural=\left(\bigoplus_{X\in\mathrm{Obj}(\mathcal C)}\mathrm{Hom}(X,X)\right)\Big/[\mathcal C,\mathcal C].
\label{eq:Abelianization}
\end{align}
For a loop $f\in\mathcal C$ we denote by $\overline f\in\mathcal C_\natural$ its natural image in the cyclic space.\footnote{One can further assume that $\overline g=0$ for every $g\in\mathcal C$ which is not a loop. We do not need this convention in our text, however, it is worth noting that such convention allows one to define a map $\mathcal C\rightarrow\mathcal C_\natural,\;f\mapsto \overline f$ which is commonly referred to as ``universal trace''.}

Following the idea of double geometry suggested in \cite{Crawley-BoeveyEtingofGinzburg'2007,VandenBergh'2008} we define vector fields on $\mathcal C$ as double derivations.
\begin{definition}
Let $V,W\in\mathrm{Obj}(\mathcal C)$ be a pair of objects. We say that a family of $\mathbf k$-linear maps
\begin{align*}
\delta:\quad \mathrm{Hom}(A,B)\rightarrow\mathrm{Hom}(A,W)\otimes\mathrm{Hom}(V,B)\qquad\textrm{for all}\quad A,B\in\mathrm{Obj}(\mathcal C)
\end{align*}
is a $(V,W)$-vector field if it satisfies the following form of Leibnitz identity
\begin{align*}
\delta(fg)=(f\otimes{\mathbf 1}_V)\delta(g)+\delta(f)({\mathbf 1}_W\otimes g)\qquad\textrm{for all composable}\quad f,g\in\mathcal C.
\end{align*}
\label{def:VectorFields}
\end{definition}

\begin{example}
With every object $V\in\mathrm{Obj}(\mathcal C)$ we can associate a $(V,V)$-vector field, such that
\begin{equation}
\begin{aligned}
\partial_V:&\quad \mathrm{Hom}(A,B)\rightarrow \mathrm{Hom}(A,V)\otimes \mathrm{Hom}(V,B)\\[5pt]
\partial_V(f)=&\left\{\begin{array}{lll}
f\otimes \mathbf{1}_V-\mathbf{1}_V\otimes f,&A=V,&\quad B=V,\\[3pt]
-\mathbf{1}_V\otimes f,&A=V,&\quad B\neq V,\\[3pt]
f\otimes \mathbf{1}_V,&A\neq V,&\quad B=V,\\[3pt]
0,&A\neq V,&\quad B\neq V,
\end{array}\right.
\end{aligned}
\label{eq:UniderivationV}
\end{equation}
for all $A,B\in\mathrm{Obj}(\mathcal C)$ and $f\in\mathrm{Hom}(A,B)$. We refer to $\partial_V$ as uniderivation associated to an object $V\in\mathrm{Obj}(\mathcal C)$.
\end{example}

Collection of all vector fields defines a bifunctor of $\mathcal C$. The latter allows one to introduce a category $\mathcal V^{\mathcal C}$ generated by vector fields.\footnote{More details can be found in \cite{Arthamonov'2018,ArthamonovRoubtsov}.} We refer to $\mathcal V^{\mathcal C}$ as \textit{category of polyvector fields (on $\mathcal C$)}. Hereinafter, we denote the composition of polyvector fields by an asterisk $\ast$ in order to distinguish it from composition of paths. Note that $\mathcal V^{\mathcal C}$ comes equipped with the grading coming from the degree of a polyvector field.

\begin{remark}
An important particular case of Definition \ref{def:VectorFields} is when $\mathcal C$ has a single object \cite{Crawley-BoeveyEtingofGinzburg'2007,VandenBergh'2008}.
In other words, when $\mathcal C = \mathcal A$ is an associative algebra. In this case, the collection of all vector fields $Der(\mathcal A,\mathcal A\otimes\mathcal A)$
forms an $\mathcal A$-bimodule. The algebra of polyvector fields on $\mathcal A$ is nothing but the tensor algebra $T_{\mathcal A} Der(\mathcal A,\mathcal A\otimes\mathcal A)$
over $\mathcal A$ generated by vector fields.
\end{remark}

Similar to (\ref{eq:Abelianization}) we define the cyclic space $\mathcal V^{\mathcal C}_\natural$ consisting of ``traces'' of polyvector fields.
The cyclic space inherits grading by the degree of a polyvector field from $\mathcal V^{\mathcal C}$. It was shown in \cite{VandenBergh'2008,Arthamonov'2018}
that homogeneous components of $\mathcal V^{\mathcal C}_\natural$ are in one-to-one correspondence with polyderivations on $\mathcal C$.

\begin{definition}
We say that a collection of linear maps
\begin{align}
\db{,}:\quad \mathrm{Hom}(A,B)\otimes\mathrm{Hom}(C,D) \rightarrow \mathrm{Hom}(C,B)\otimes\mathrm{Hom}(A,D),\qquad\textrm{for all\quad $A,B,C,D\in\mathrm{ Obj}(\mathcal C)$}
\label{eq:DoubleBracketSourcesAndTargets}
\end{align}
is a \textit{double quasi Poisson bracket} on $\mathcal C$, if it satisfies the following properties:
\begin{subequations}
\begin{itemize}
\item \textbf{Skew-symmetry}
\begin{align}
\db{f,g}=-\left(\db{g,f}\right)^\tau,
\label{eq:DoubleBracketSkewSymmetry}
\end{align}
\item \textbf{Double Leibniz Identity}
\begin{align}
\db{fg,h}=&(\mathbf{1}_{s(h)}\otimes f)\db{g,h}+\db{f,h}(g\otimes \mathbf{1}_{t(h)}),\qquad\textrm{whenever $f,g$ are composable,}
\label{eq:DoubleLeibnizIdentityI}\\
\db{f,g h}= &(g\otimes\mathbf{1}_{s(f)})\db{f,h}+\db{f,g}(\mathbf{1}_{t(f)}\otimes h),\qquad\textrm{whenever $g,h$ are composable,}
\label{eq:DoubleLeibnizIdentityII}
\end{align}
here $s(f)$ and $t(f)$ stand for the source and target of $f\in\mathcal C$ respectively.
\item \textbf{Double Quasi Jacobi Identity}
\begin{align}
R_{1,2}R_{2,3}+R_{2,3}R_{3,1}+R_{3,1}R_{1,2}=\frac14\sum_{V\in\mathrm{Obj}(\mathcal C)}\overline{\partial_V\ast\partial_V\ast\partial_V}.
\label{eq:DoubleQuasiJacobiIdentity}
\end{align}
\end{itemize}
\end{subequations}
Here $R_{i,j}: (Mor\,\mathcal C)^{\otimes 3}\rightarrow(Mor\,\mathcal C)^{\otimes 3}$ stands for the double bracket between $i$'th and $j$'th tensor component:
\begin{equation*}
\begin{aligned}
R_{i,j}(f_1\otimes\dots\otimes f_3)=&f_1\otimes\dots\otimes\underbrace{\db{f_i, f_j}'}_i\otimes \dots\otimes\underbrace{\db{f_i, f_j}''}_j\otimes\dots\otimes f_3,
\end{aligned}
\end{equation*}
where we have used Sweedler notation $\db{f_i,f_j}=\db{f_i,f_j}'\otimes\db{f_i,f_j}''$ for the tensor components of the double bracket.

Each term of the form $\frac14\,\overline{\partial_V\ast\partial_V\ast\partial_V}$ on the r.h.s. of double quasi Jacobi identity (\ref{eq:DoubleQuasiJacobiIdentity}) is a triple derivation associated to a tri-vector field $\frac14\partial_V\ast\partial_V\ast\partial_V$.
\label{def:DoubleQuasiBracket}
\end{definition}

\begin{remark}
Definition \ref{def:DoubleQuasiBracket} is essentially equivalent to what is called a ``$B$-linear double quasi Poisson bracket'' in \cite{VandenBergh'2008}, where $B$ stands for the subalgebra generated by idempotents. For our purpose, however, it is very fruitful to keep track of sources and targets explicitly. Not only this approach is aesthetically more pleasing, but it also explains the meaning of the ansatz (\ref{eq:RMatrixAnsatz}) for the $r$-matrix. (See also Remark \ref{rem:TwistedCommutatorSourcesAndTargets}.)
\end{remark}

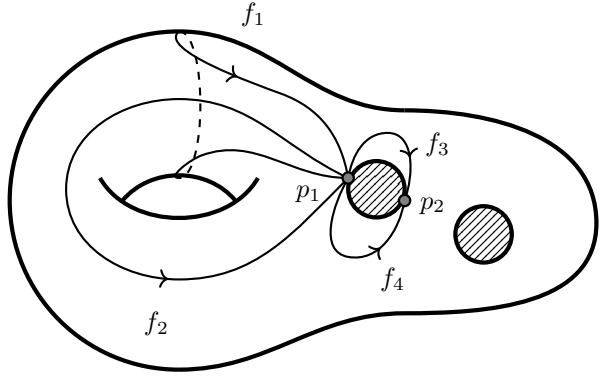
\begin{figure}
\begin{tikzpicture}[scale=1.5]
  \draw[thick, -->-] (1.5,0.2) to[out=155,in=0] (0,0.9) to[out=180,in=90] (-1,0.1) to[out=270,in=180] (0,-0.7) to[out=0,in=225] (1.5,0.2);
  \draw[thick,--<-] (1.5,0.2) to[out=100,in=-20] (0.9,0.95) to[out=160,in=220,looseness=0.6] (0,1.5);
  \draw[thick,dashed] (0,1.5) to[out=0,in=0,looseness=0.5] (0,0.2);
  \draw[thick] (0,0.2) to[out=180,in=200] (0.2,0.4) to[out=20,in=180] (1.5,0.2);
  \draw[thick,-->-] (1.5,0.2) to[out=240,in=170] (1.5,-0.5) to[out=350,in=260] (2,0);
  \draw[thick,-->-] (1.5,0.2) to[out=80,in=180] (1.85,0.6) to[out=0,in=70] (2,0);
  \fill [pattern= north east lines] (1.75,0.1) circle (0.25);
  \draw [ultra thick] (1.75,0.1) circle (0.25);
  \fill [pattern= north east lines] (2.7,-0.3) circle (0.25);
  \draw [ultra thick] (2.7,-0.3) circle (0.25);
  \fill (1.5,0.2) circle (0.06);
  \fill[gray] (1.5,0.2) circle (0.04);
  \fill (2,0) circle (0.06);
  \fill[gray] (2,0) circle (0.04);
  \draw[ultra thick] (0,1.5) to[out=0,in=180] (2,0.8);
  \draw[ultra thick] (0,1.5) arc (90:270:1.5);
  \draw[ultra thick] (0,-1.5) to[out=0,in=180] (2,-1) to[out=0,in=270] (3.7,-0.2) to[out=90,in=0] (2,0.8);
  \draw[ultra thick] (-0.5,0) to[out=50,in=130] (0.5,0);
  \draw[ultra thick] (-0.7,0.2) to[out=-60,in=240] (0.7,0.2);
  \draw (0.85,1.65) node[left] {$f_1$};
  \draw (-0.2,-0.9) node[below] {$f_2$};
  \draw (2.1,0.5) node[right] {$f_3$};
  \draw (1.7,-0.7) node[right] {$f_4$};
  \draw (1.15,-0.1) node[above] {$p_1$};
  \draw (2.25,-0.2) node[above] {$p_2$};
\end{tikzpicture}
\caption{Example of generating arcs for $\pi_1(\Sigma_{1,2},p_1,p_2)$.}
\label{fig:ExampleOfGeneratingArcs}
\end{figure}

In order to define the double quasi Poisson bracket on $\mathcal C$, pick a collection of open arcs $f_1,\dots,f_k$ which freely generate
$\mathcal C=\mathbf k\pi_1(\Sigma,p_1,\dots,p_n)$. This is possible, because $\partial \Sigma\neq\emptyset$.
Moreover, without loss of generality we can assume that generating arcs do not intersect anywhere except for the endpoints $p_1,\dots,p_n\in\partial\Sigma$.
As a result we obtain an oriented multigraph with vertex set $\{p_1,\dots,p_n\}$. Orientation of a surface defines the total order of half-edges adjacent to a given vertex. An example of such a collection of generating arcs for a torus with two boundary components and two marked points can be found on Figure \ref{fig:ExampleOfGeneratingArcs}.

Consider a marked point $p_l\in\partial\Sigma$ and denote by
\begin{align*}
S_l=\{x_1,\dots x_m\},
\end{align*}
the ordered set of half-edges adjacent to this vertex listed in the counterclockwise order according to the fixed orientation of the surface.
It will be convenient for us to label the half-edges by generators and their inverses, so $x_i=f_{n(i)}^{\epsilon(i)}$ with $\epsilon(i)=+1$
is for an outgoing half-edge and $\epsilon(i)=-1$ is for an incoming half-edge. The contribution from $p_l$ to the quasi Poisson bivector reads
\begin{subequations}
\begin{align}
P_{p_l}=\frac12\sum_{i<j}\big( x_i\ast\frac\partial{\partial x_i}\ast x_j\ast\frac\partial{\partial x_j}-x_j\ast\frac\partial{\partial x_j}\ast x_i\ast\frac\partial{\partial x_i}\big).
\label{eq:BivectorContribution}
\end{align}
Here $\frac\partial{\partial f_i}\in D_{t(f_i),s(f_i)}$ is a vector field on a category $\mathcal C$ defined on generators as
\begin{align*}
\frac\partial{\partial f_i}(f_j)=\left\{\begin{array}{ll}
1_{s(f_i)}\otimes 1_{t(f_i)},&i=j,\\
0,&i\neq j.
\end{array}\right.
\end{align*}
The derivation with respect to the inverses of generators is defined as
\begin{align*}
\frac\partial{\partial(f_i^{-1})}=-f_i\ast\frac\partial{\partial f_i}\ast f_i.
\end{align*}
Finally, the double quasi Poisson bracket on $\mathcal C$ is then computed by adding contributions from all the marked points on the boundary
\begin{align}
P_{\Sigma}=\sum_{l=1}^nP_{p_l},\qquad\qquad \db{,}=\overline{P_\Sigma},
\label{eq:DoubleBracketRibbonGraphSum}
\end{align}
\label{eq:DoubleBracketRibbonGraph}
\end{subequations}
where $\overline{P_\Sigma}$ stands for the biderivation associated to a noncommutative bivector $P_\Sigma$.

\subsection{Double Bracket and Standard Operations on Surfaces}

One of the most important properties of double brackets (\ref{eq:DoubleBracketRibbonGraph}), which becomes manifest in the approach of \cite{MassuyeauTuraev'2014}, is that the bracket on $\mathbf k\pi_1(\Sigma,p_1,\dots,p_n)$ does not depend on the choice of generating arcs.
Moreover, the double bracket behaves naturally with respect to the standard operations like adding/removing a marked point or gluing two surfaces together.
In particular, from the construction of \cite{MassuyeauTuraev'2014} we immediately get the following lemma.
\begin{lemma}[Adding/Removing a Marked Point]
    Let $\Sigma$ be an oriented surface with nonempty boundary and marked points $p_0,\dots,p_n\in\partial\Sigma$. Consider a full subcategory $\mathcal C'=\mathbf k\pi_1(\Sigma,p_1,\dots,p_n)$ and denote by $\imath:\mathcal C'\hookrightarrow\mathcal C$, the natural inclusion functor. Then, double brackets (\ref{eq:DoubleBracketRibbonGraph}) on $\mathcal C$ and $\mathcal C'$ are related as
    \begin{align*}
    \imath\left(\db{f,g}_{\mathcal C'}\right)=\db{\imath(f),\imath(g)}_{\mathcal C}\qquad\qquad\textrm{for all $f,g\in Mor\,\mathcal C'$.}
    \end{align*}
    \label{lemm:AddingRemovingPoint}
\end{lemma}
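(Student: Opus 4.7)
The plan is to exploit the invariance of the double bracket (\ref{eq:DoubleBracketRibbonGraph}) under change of generating arcs, which is stated in the paragraph immediately preceding the lemma and proved in \cite{MassuyeauTuraev'2014}. This freedom reduces the lemma to computing both sides of the equality in a generating set that is adapted to the inclusion $\mathcal{C}' \hookrightarrow \mathcal{C}$.

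First I would fix a collection $\{g_1,\dots,g_m\}$ of arcs that freely generate $\mathcal{C}'=\mathbf k\pi_1(\Sigma,p_1,\dots,p_n)$ and whose interiors avoid $p_0$. I would then extend this to a free generating set $\{g_1,\dots,g_m,e_1,\dots,e_r\}$ of $\mathcal{C}=\mathbf k\pi_1(\Sigma,p_0,p_1,\dots,p_n)$ by adjoining only arcs $e_j$ that are incident to $p_0$: if $p_0$ lies on an existing boundary component one need only add a single arc from $p_0$ to some $p_i$, while if $p_0$ lies on a new boundary component one must additionally add an arc wrapping that component. This is a standard statement about free generators of fundamental groupoids of surfaces with boundary.

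Next I would examine the resulting bivector $P_\Sigma = \sum_{l=0}^n P_{p_l}$ from (\ref{eq:DoubleBracketRibbonGraphSum}). Since every half-edge at $p_0$ corresponds to one of the new generators $e_j$, the contribution $P_{p_0}$ is a sum of terms of the form $x_i\ast\frac{\partial}{\partial x_i}\ast x_j\ast\frac{\partial}{\partial x_j}$ with $x_i,x_j\in\{e_1^{\pm},\dots,e_r^{\pm}\}$. Each such term contains at least one derivation $\frac{\partial}{\partial e_j}$, which annihilates every morphism of $\mathcal{C}'$ because the latter are words in the $g_l$'s alone. At the remaining marked points $p_1,\dots,p_n$, the set of half-edges is obtained from the set used for $\mathcal{C}'$ by interleaving (in the cyclic order) possibly some new half-edges coming from the $e_j$'s; importantly the induced cyclic order on the old half-edges is unchanged. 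Thus every extra summand in $P_{p_i}$ relative to the $\mathcal{C}'$-bivector also carries a factor $\frac{\partial}{\partial e_j}$ and vanishes on morphisms of $\mathcal{C}'$.

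What survives, when the biderivation $\overline{P_\Sigma}$ is evaluated on a pair $\imath(f),\imath(g)$ with $f,g\in\mathrm{Mor}\,\mathcal{C}'$, is precisely the biderivation built from $\sum_{l=1}^n P_{p_l}$ restricted to the $g$-half-edges, which by construction equals $\imath$ applied to $\db{f,g}_{\mathcal{C}'}$. The main technical hurdle is purely bookkeeping: one must verify that extending generators this way is always possible and that the cyclic orders behave as claimed at each $p_i$; once this is in place the identity follows directly from Definition \ref{eq:DoubleBracketRibbonGraph} and the $g$-derivation vanishing property of the $e_j$-dependent terms.
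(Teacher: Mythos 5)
The paper does not actually prove Lemma \ref{lemm:AddingRemovingPoint}: it asserts that the statement follows ``immediately'' from the construction of \cite{MassuyeauTuraev'2014}, together with the (also uncited-in-detail) fact that the bracket (\ref{eq:DoubleBracketRibbonGraph}) is independent of the choice of generating arcs. Your proposal supplies a genuine direct verification, and its core mechanism is sound: choose generators of $\mathcal C$ adapted to the inclusion, observe that every summand of $P_{p_0}$, and every summand of $P_{p_i}$ ($i\geq 1$) involving a half-edge of a new arc $e_j^{\pm}$, contains a factor $\frac{\partial}{\partial e_j}$ (note $\frac{\partial}{\partial(e_j^{-1})}=-e_j\ast\frac{\partial}{\partial e_j}\ast e_j$ also does), and such a factor annihilates any word in the old generators, so the associated biderivation kills $f\otimes g$ for $f,g\in\mathrm{Mor}\,\mathcal C'$. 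Since the counterclockwise order of the old half-edges at each $p_i$ is unchanged by inserting the new ones, the surviving terms reproduce $P_{\Sigma}^{\mathcal C'}$ exactly. This is the same vanishing mechanism the paper itself uses later in the proof of Proposition \ref{prop:GluingBivector}, so your argument is very much in the spirit of the text even though the text omits it here. One inaccuracy to fix: your claim that when $p_0$ lies on a boundary component carrying no other marked point one must add \emph{both} a connecting arc and an arc wrapping that component overcounts. Passing from $n$ to $n+1$ basepoints of a connected surface enlarges the free groupoid's rank by exactly one, so a single arc from $p_0$ to some $p_i$ always suffices; the wrapping loop at $p_0$ is already expressible as a conjugate (by that arc) of a word in the old generators, and adjoining it as an extra generator would destroy freeness, which formula (\ref{eq:DoubleBracketRibbonGraph}) requires. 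With that correction the argument goes through unchanged, since the single new arc is still incident to $p_0$ and the vanishing argument applies to it.
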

In what follows we always omit natural inclusion functors $\imath:\mathcal C'\hookrightarrow\mathcal C$, whenever the inclusion is clear from the context.

The standard procedure for combining two quasi Poisson algebras into a new one is known as \textit{fusion} \cite{AlekseevKosmann-SchwarzbachMeinrenken'2002}.
In our case, this corresponds to gluing two surfaces $\Sigma_1$ and $\Sigma_2$ along the (segment of the) boundary, matching the marked point as shown on Figure \ref{fig:Fusion}. Let $\Sigma_1,\Sigma_2$ be oriented surfaces with nonempty boundary together with a choice of marked points $p_0,\dots, p_m\in\partial \Sigma_1$ and $q_0,\dots,q_n\in\partial\Sigma_2$. Consider a surface $\Sigma=\Sigma_1\#_{L}\Sigma_2$ obtained as gluing of $\Sigma_1$ and $\Sigma_2$ along the segment of the boundary $L$ matching the marked points $p_0\in\partial\Sigma_1$ and $q_0\in\partial\Sigma_2$ as shown on Figure \ref{fig:Fusion}. We assume that no other marked points appear on the glued segments. The choice of marked points on both halves defines a choice of $m+n-1$ marked points $\phi_0,p_1,\dots,p_n,q_1,\dots,q_m\in\partial\Sigma$, where $\phi_0$ stands for the common image of $p_0$ and $q_0$ in $\partial\Sigma$. By analogy with notations used in \cite{AlekseevKosmann-SchwarzbachMeinrenken'2002} we denote the resulting $\mathbf k$-linear category of paths on $\Sigma$ as $\mathcal C=\mathcal C_1\prescript{}{p_0}{\oast}^{}_{q_0}\mathcal C_2,$ where $\oast$ stands for the fusion symbol.

Note that an arbitrary polyvector field $\delta\in\mathcal V^{\mathcal C_1}$ on $\mathcal C_1$ (resp. $\mathcal C_2$) can be extended to a polyvector field on $\mathcal C$ by simply declaring that $\delta$ acts trivially on generators of $\mathcal C_2$ (res. $\mathcal C_1$). To avoid cumbersome notations we denote the induced (poly)vector field by the same symbol $\delta\in\mathcal V^{\mathcal C}$. In particular, the bivectors $P_{\Sigma_1},P_{\Sigma_2}$ defined in (\ref{eq:DoubleBracketRibbonGraph}) induce bivectors on $\mathcal C$. At the same time, uniderivations $\partial_{p_0}\in\mathcal V^{\mathcal C_1}$ and $\partial_{q_0}\in\mathcal V^{\mathcal C_2}$ induce vector fields on $\mathcal C$ of degree one.

The following lemma is originally due to \cite{AlekseevKosmann-SchwarzbachMeinrenken'2002,AlekseevMalkinMeinrenken'1998} (see also \S6 in \cite{VandenBergh'2008} for equivalent operation on quasi Poisson brackets for quiver path algebras, and \cite{Nie'2013} for a good recent review of fusion in the context of surface brackets).
\begin{figure}
\centering
\begin {tikzpicture}
    \fill[blue, opacity=0.2] (-3,0) -- (0,0) -- (0,3) -- (-3,3) -- cycle;
    \fill[red,  opacity=0.2] (3,0) -- (0,0) -- (0,3) -- (3,3) -- cycle;

    \draw [red, -latex] (0,0) to [out=30, in=180] (3,1);
    \draw [red, -latex] (0,0) to [out=45, in=180] (3,2);
    \draw [red, -latex] (0,0) to [out=60, in=-90] (1,3);

    \draw [blue, -latex] (0,0) to [out=120, in=-90] (-1,3);
    \draw [blue, -latex] (0,0) to [out=135, in=-90] (-2,3);
    \draw [blue, -latex] (0,0) to [out=150, in=0] (-3,1);

    \draw (-3.5,1.5) node {\color{blue} $\Sigma_1$};
    \draw (3.5,1.5)  node {\color{red} $\Sigma_2$};

    \draw (2,0.6) node {\color{red} $y_1$};
    \draw (2,1.5) node {\color{red} $y_2$};
    \draw (1.5,2.2) node {\color{red} $\ddots$};
    \draw (0.7,2.5) node {\color{red} $y_n$};

    \draw (-2,0.6)   node {\color{blue} $x_m$};
    \draw (-2.2,1.5) node {\color{blue} \reflectbox{$\ddots$}};
    \draw (-1.7,2.5) node {\color{blue} $x_2$};
    \draw (-0.7,2.5) node {\color{blue} $x_1$};

    \draw (-3,0) -- (3,0);
    \draw [dashed] (0,0) -- (0,3);

    \draw[fill=black] (0,0) circle (0.05);
    \draw (0,0) node [below] {$\phi_0$};
\end {tikzpicture}
\caption{Fusion of two quasi Poisson structures associated to gluing of surfaces}
\label{fig:Fusion}
\end{figure}
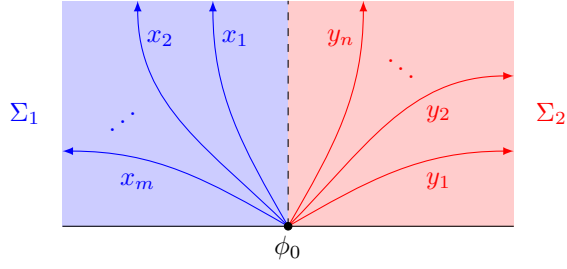

\begin{lemma}[Fusion]
Let $\Sigma=\Sigma_1\#_{L}\Sigma_2$ be a surface obtained by gluing $\Sigma_1,\Sigma_2$ along the segment of the boundary matching $p_0\in\partial\Sigma_1$ with $q_0\in\partial\Sigma_2$ as shown on Figure \ref{fig:Fusion}. The quasi Poisson bivectors (\ref{eq:DoubleBracketRibbonGraph}) associated to the three surfaces are related as follows:
\begin{align}
P_{\Sigma}-P_{\Sigma_1}-P_{\Sigma_2}= \frac12\left(\partial_{p_0}\ast\partial_{q_0}-\partial_{q_0}\ast\partial_{p_0}\right).
\label{eq:Fusion}
\end{align}
\label{lemm:Fusion}
\end{lemma}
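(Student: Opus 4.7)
The plan is to localize the difference $P_\Sigma - P_{\Sigma_1} - P_{\Sigma_2}$ at the fused marked point $\phi_0$ and then identify the local excess with the commutator of uniderivations on the right-hand side of (\ref{eq:Fusion}).

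First, I would compare contributions vertex-by-vertex using the sum formula in (\ref{eq:DoubleBracketRibbonGraphSum}). Every marked point other than $\phi_0$ retains its cyclic order of adjacent half-edges under gluing, since the gluing affects only a neighborhood of $\phi_0$; consequently $P^{\Sigma}_{p_i} = P^{\Sigma_1}_{p_i}$ for $i \geq 1$ and $P^{\Sigma}_{q_j} = P^{\Sigma_2}_{q_j}$ for $j \geq 1$. The problem therefore reduces to showing
\begin{align*}
P_{\phi_0} - P_{p_0} - P_{q_0} \;=\; \tfrac{1}{2}\bigl(\partial_{p_0}\ast\partial_{q_0} - \partial_{q_0}\ast\partial_{p_0}\bigr).
\end{align*}

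Second, I would read off the cyclic order of half-edges at $\phi_0$ from Figure \ref{fig:Fusion}: counterclockwise it is the concatenation of the orderings at $q_0$ and $p_0$, namely $(y_1,\ldots,y_n,x_1,\ldots,x_m)$. Splitting the sum (\ref{eq:BivectorContribution}) for $\phi_0$ according to whether both indices lie among the $x$'s, both among the $y$'s, or one of each, the diagonal blocks recover $P_{p_0}$ and $P_{q_0}$ exactly, while the mixed block collects the terms
\begin{align*}
\tfrac{1}{2}\sum_{i=1}^{n}\sum_{j=1}^{m}\left( y_i\ast\tfrac{\partial}{\partial y_i}\ast x_j\ast\tfrac{\partial}{\partial x_j} \;-\; x_j\ast\tfrac{\partial}{\partial x_j}\ast y_i\ast\tfrac{\partial}{\partial y_i}\right).
\end{align*}

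Third, I would establish a factorization of this mixed block via the key local identity asserting that, for each marked point $V$, the uniderivation $\partial_V$ admits the half-edge decomposition
\begin{align*}
\partial_V \;=\; \sum_{z\in S_V} z \ast \tfrac{\partial}{\partial z},
\end{align*}
where $S_V$ is the ordered set of half-edges adjacent to $V$. This identity is proved by direct verification on each generator $f_k$, using the four-case definition (\ref{eq:UniderivationV}) of $\partial_V$ together with the defining rule $\tfrac{\partial}{\partial f_i^{-1}} = -f_i \ast \tfrac{\partial}{\partial f_i} \ast f_i$ for the derivation with respect to an inverse half-edge: an outgoing half-edge of $V$ produces the terms of $\partial_V$ in which $V = s(f_k)$, an incoming half-edge produces those in which $V = t(f_k)$, and a loop at $V$ produces both, matching the full four-case expression. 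Granting this, the mixed block factors as a commutator $\tfrac{1}{2}\bigl(\partial_{p_0}\ast\partial_{q_0} - \partial_{q_0}\ast\partial_{p_0}\bigr)$ of single derivations, yielding (\ref{eq:Fusion}).

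The main obstacle is the half-edge decomposition of $\partial_V$ in the third step: it demands careful bookkeeping of sources, targets, and the signs introduced by inverting generators (together with the correct interpretation of the non-commutative composition $\ast$ when a half-edge is adjacent to $V$ via both its endpoints). By contrast, the vertex-wise localization in step one is immediate from (\ref{eq:DoubleBracketRibbonGraph}), and the combinatorial split in step two is just a reading of Figure \ref{fig:Fusion}.
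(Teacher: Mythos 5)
Your proposal follows essentially the same route as the paper's proof: the contributions of all marked points other than the fused one cancel, the cyclic order at $\phi_0$ is the concatenation $(y_1,\dots,y_n,x_1,\dots,x_m)$, and the cross terms of the sum (\ref{eq:BivectorContribution}) at $\phi_0$ assemble into $\frac12\left(\partial_{p_0}\ast\partial_{q_0}-\partial_{q_0}\ast\partial_{p_0}\right)$. The only difference is that you state and propose to verify explicitly the half-edge decomposition $\partial_V=\sum_{z\in S_V}z\ast\frac{\partial}{\partial z}$, which the paper uses silently in the last line of its computation.
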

\begin{proof}
Choose two collections of arcs which generate freely $\mathcal C_1=\mathbf k\pi_1(\Sigma_1,p_0,\dots,p_n)$ and $\mathcal C_2=\mathbf k\pi_1(\Sigma_2,q_0,\dots,q_n)$ respectively.
The contributions from $P_{p_1},\dots,P_{p_m}$ and $P_{q_1},\dots,P_{q_n}$ cancel on the r.h.s. of (\ref{eq:Fusion}). On the other hand, let $S_{p_0}=\{x_1,\dots,x_m\}$
and $S_{q_0}=\{y_1,\dots,y_n\}$ denote the ordered sets of half-edges adjacent to $p_0$ and $q_0$ respectively (see Figure \ref{fig:Fusion}).
As a corollary, the ordered set of half-edges adjacent to their common image in $\Sigma$ reads $S_{\phi_0} = \left\{y_1,\dots,y_n,x_1,\dots,x_m\right\}$.
Hence, the difference of the associated bivectors (\ref{eq:BivectorContribution}) has the following form
\begin{align*}
P_{\phi_0}^{\Sigma}-P_{p_0}^{\Sigma_1}-P_{q_0}^{\Sigma_2}= &\frac12\sum_{i=1}^{m}\sum_{j=1}^n\big(x_i\ast\frac\partial{\partial x_i}\ast y_j\ast\frac\partial{\partial y_j} -y_j\ast\frac\partial{\partial y_j}\ast x_i\ast\frac\partial{\partial x_i}\big)\\
=&\frac12\left(\partial_{p_0}\ast\partial_{q_0}-\partial_{q_0}\ast\partial_{p_0}\right).
\end{align*}
\end{proof}

\begin{remark}
Note that the r.h.s. of (\ref{eq:Fusion}) is not symmetric with respect to exchange $\Sigma_1\leftrightarrow\Sigma_2$. As a result, fusion of the two quasi Poisson categories is not symmetric either
\begin{align*}
\mathcal C_1\prescript{}{p_0}{\oast}^{}_{q_0}\mathcal C_2\neq \mathcal C_2\prescript{}{q_0}{\oast}^{}_{p_0}\mathcal C_1.
\end{align*}
This exactly corresponds to the two ways of gluing $\Sigma_1,\Sigma_2$ matching the marked point as shown on Figure \ref{fig:FusionNonSymmetric}.
\begin{figure}
\begin {tikzpicture}
    \fill[red, opacity=0.3]  (1,0) -- (2,1) -- (2,-1) -- cycle;
    \fill[blue, opacity=0.3] (0,0) -- (-1,1) -- (-1,-1) -- cycle;

    \draw [dashed] (0,0) -- (-1,1);
    \draw [dashed] (1,0) -- (2,1);

    \draw [thick, dotted] (0,0) -- (-1,-1);
    \draw [thick, dotted] (1,0) -- (2,-1);

    \draw [fill=black] (0,0) circle (0.05);
    \draw [fill=black] (1,0) circle (0.05);

    \draw (-0.6,0) node {$\Sigma_1$};
    \draw (1.7,0)  node {$\Sigma_2$};

    \draw (0,0.4) node {$p_0$};
    \draw (1,0.4) node {$q_0$};


    \draw [-latex] (3,1) -- (4,1.5);
    \draw [-latex] (3,-1) -- (4,-1.5);

    \draw (3.5,2) node {$\mathcal{C}_1 \prescript{}{p_0}{\oast}^{}_{q_0} \mathcal{C}_2$};
    \draw (3.5,-2) node {$\mathcal{C}_2 \prescript{}{q_0}{\oast}^{}_{p_0} \mathcal{C}_1$};


    \fill [blue, opacity=0.3] (5,1) -- (6,1) -- (6,2) -- (5,2) -- cycle;
    \fill [red, opacity=0.3]  (6,1) -- (7,1) -- (7,2) -- (6,2) -- cycle;

    \draw [thick, dotted] (5,1) -- (7,1);
    \draw [dashed] (6,1) -- (6,2);

    \draw [fill=black] (6,1) circle (0.05);


    \fill [blue, opacity=0.3] (5,-1) -- (6,-1) -- (6,-2) -- (5,-2) -- cycle;
    \fill [red, opacity=0.3]  (6,-1) -- (7,-1) -- (7,-2) -- (6,-2) -- cycle;

    \draw [dashed] (5,-1) -- (7,-1);
    \draw [thick, dotted] (6,-1) -- (6,-2);

    \draw [fill=black] (6,-1) circle (0.05);

\end {tikzpicture}
\caption{Two ways of gluing $\Sigma_1$ with $\Sigma_2$ matching the marked point.}
\label{fig:FusionNonSymmetric}
\end{figure}
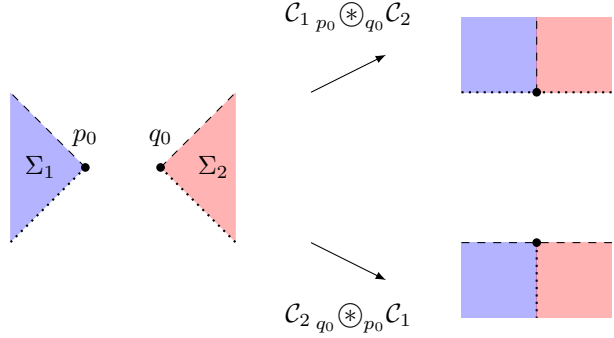
\end{remark}

On the other hand, the r.h.s. of (\ref{eq:Fusion}) does not contribute to the quasi Poisson bracket on full subcategory
\begin{align*}
\mathcal C'=\mathbf k\pi_1(\Sigma,p_1,\dots,p_m,q_1,\dots,q_n),
\end{align*}
where we have forgotten the vertex $\phi_0\in\partial\Sigma$. This leads to the following
\begin{proposition}
Let $\Sigma_1,\Sigma_2$ be oriented surfaces with nonempty boundary together with a choice of marked points $p_0,\dots,p_n\in\partial \Sigma_1$ and $q_0,\dots,q_n\in\partial\Sigma_2$. Consider a surface $\Sigma=\Sigma_1\#_L\Sigma_2$ obtained as gluing $\Sigma_1$ and $\Sigma_2$ along the (segment of) the boundary component containing $p_0,q_0$.
Denote by $\mathcal C'=\mathbf k\pi_1(\Sigma,p_1,\dots,p_m,q_1,\dots,q_n)$ the full subcategory of $\mathcal C=\mathbf k\pi_1(\Sigma,\phi_0,p_1,\dots,p_m,q_1,\dots,q_n)$. Then
\begin{align}
\left(P_{\Sigma}-P_{\Sigma_1}-P_{\Sigma_2}\right)\big|_{\mathcal C'}=0.
\label{eq:GluingBivector}
\end{align}
In particular, $P_{\Sigma_1}+P_{\Sigma_2}$ preserves $\mathcal C'$, hence reduces to a bivector on a subcategory.
\label{prop:GluingBivector}
\end{proposition}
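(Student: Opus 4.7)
My plan is to apply Lemma \ref{lemm:Fusion} to rewrite the difference as
\[
P_\Sigma-P_{\Sigma_1}-P_{\Sigma_2}=\tfrac12\bigl(\partial_{p_0}\ast\partial_{q_0}-\partial_{q_0}\ast\partial_{p_0}\bigr),
\]
and then show that the biderivation induced by the right-hand side vanishes on every pair $(f,g)\in\mathrm{Mor}\,\mathcal C'\times\mathrm{Mor}\,\mathcal C'$.

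The conceptual heart of the argument is the identity $\partial_{p_0}+\partial_{q_0}=\partial_{\phi_0}$ of vector fields on $\mathcal C$, where $\partial_{\phi_0}$ is the uniderivation at the fused object $\phi_0$ defined in (\ref{eq:UniderivationV}). Both sides are $(\phi_0,\phi_0)$-vector fields, hence determined by their action on a free generating set of $\mathcal C$; since $\partial_{p_0}$ (resp.\ $\partial_{q_0}$) acts trivially on the generators of $\mathcal C_2$ (resp.\ $\mathcal C_1$) by the extension convention recalled before the Fusion Lemma, equality on generators is immediate from (\ref{eq:UniderivationV}).

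Next I show that $\partial_{\phi_0}(f)=0$ for every $f\in\mathrm{Mor}\,\mathcal C'$. Decomposing $f=h_1\cdots h_k$ into generators of $\mathcal C$ and applying the Leibniz identity iteratively, $\partial_{\phi_0}(f)$ becomes a sum indexed by the $i$ for which $\phi_0\in\{s(h_i),t(h_i)\}$. Since $s(f),t(f)\neq\phi_0$, only interior crossings $t(h_i)=s(h_{i+1})=\phi_0$ contribute, and each such crossing produces two terms of the form $\pm(h_1\cdots h_i)\otimes(h_{i+1}\cdots h_k)$ (the $+$ from $\partial_{\phi_0}(h_i)=h_i\otimes\mathbf{1}_{\phi_0}$ and the $-$ from $\partial_{\phi_0}(h_{i+1})=-\mathbf{1}_{\phi_0}\otimes h_{i+1}$) which cancel. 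Combining with the first step yields the key identity $\partial_{p_0}(f)=-\partial_{q_0}(f)$ in $\mathcal C\otimes\mathcal C$ for every $f\in\mathrm{Mor}\,\mathcal C'$.

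Finally, the proof concludes from the skew-symmetric shape of the bivector. For any degree-two polyvector field $X\ast Y$ the associated biderivation $\overline{X\ast Y}(f,g)$ is a specific bilinear function of $X(f)$ and $Y(g)\in\mathcal C\otimes\mathcal C$. Substituting $\partial_{p_0}(f)=-\partial_{q_0}(f)$ and $\partial_{q_0}(g)=-\partial_{p_0}(g)$ into $\overline{\partial_{p_0}\ast\partial_{q_0}}(f,g)$, the two signs multiply to $+1$ and the resulting expression is precisely $\overline{\partial_{q_0}\ast\partial_{p_0}}(f,g)$. Hence the two terms in $\partial_{p_0}\ast\partial_{q_0}-\partial_{q_0}\ast\partial_{p_0}$ cancel on $\mathrm{Mor}\,\mathcal C'\times\mathrm{Mor}\,\mathcal C'$, proving (\ref{eq:GluingBivector}). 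The main subtlety is the Sweedler bookkeeping in the last step, since the two tensor factors of the biderivation depend separately on the ``primed'' and ``double-primed'' components of $\partial_{p_0}(f)$; once the bilinear dependence is made explicit, the sign cancellation is automatic.
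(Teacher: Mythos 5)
Your proof is correct and follows essentially the same route as the paper's: both reduce to the Fusion Lemma and then show that the correction bivector $\tfrac12(\partial_{p_0}\ast\partial_{q_0}-\partial_{q_0}\ast\partial_{p_0})$ annihilates $\mathcal C'$ because $\partial_{p_0}(f)=-\partial_{q_0}(f)$ for $f\in\mathrm{Mor}\,\mathcal C'$. The only difference is presentational: the paper verifies this sign cancellation on a generating set of $\mathcal C'$ (the $x_i$, the $y_j$, and one arc $x_0y_0$ crossing the seam, where $\partial_{p_0}(x_0y_0)=x_0\otimes y_0=-\partial_{q_0}(x_0y_0)$), whereas you package the same mechanism as the identity $\partial_{p_0}+\partial_{q_0}=\partial_{\phi_0}$ together with $\partial_{\phi_0}\big|_{\mathcal C'}=0$ and then carry out the cancellation for arbitrary pairs of morphisms.
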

\begin{proof}
Note that on the right hand side of the fusion equation (\ref{eq:Fusion}) we have a bivector which is constructed out of two uniderivations $\partial_{p_0}$ and $\partial_{q_0}$. We claim that this bivector acts trivially on $\mathcal C'$. Note that because the double bracket on $\mathcal C'$ does not depend on the choice of generating arcs, it is enough for us to prove the above claim for any choice of generators. To this end, let $x_1,\dots, x_M$ be generators for $\mathcal C_1'=\mathbf k\pi_1(\Sigma_1,p_1,\dots,p_m)$, while $y_1,\dots,y_N$ be generators for $\mathcal C_2'=\mathbf k\pi_1(\Sigma_2,q_1,\dots,q_n)$. From (\ref{eq:UniderivationV}) we know that both $\partial_{p_0}$ and $\partial_{q_0}$ act trivially on $\mathcal C_1'$ and $\mathcal C_2'$, as a corollary
\begin{align}
(\partial_{p_0}\ast\partial_{q_0}-\partial_{q_0}\ast\partial_{p_0}) (x_j\otimes\rule{0.3cm}{0.15mm}) =(\partial_{p_0}\ast\partial_{q_0}-\partial_{q_0}\ast\partial_{p_0}) (y_j\otimes\rule{0.3cm}{0.15mm})=0.
\label{eq:FusionRHSTrivialActionDisjoint}
\end{align}
On the other hand, consider $f=x_0 y_0$, where $x_0$ is an arbitrary arc from $p_1$ to $p_0$, while $y_0$ is an arbitrary arc from $q_0$ to $q_1$. By (\ref{eq:UniderivationV}) we have
\begin{align*}
\partial_{p_0}f=&\partial_{p_0}(x_0 y_0)=(\partial_{p_0}x_0)(\mathbf 1_{pq_0}\otimes y_0)
\stackrel{(\ref{eq:UniderivationV})}{=}(x_0\otimes\mathbf 1_{pq_0})(\mathbf 1_{pq_0}\otimes y_0)=(x_0\otimes y_0),\\[5pt]
\partial_{q_0}f=&\partial_{q_0}(x_0 y_0)= (x_0\otimes\mathbf 1_{pq_0})(\partial_{q_0}y_0)\stackrel{(\ref{eq:UniderivationV})}{=} -(x_0\otimes\mathbf 1_{pq_0})(\mathbf 1_{pq_0}\otimes y_0)=-(x_0\otimes y_0).
\end{align*}
As a result we have
\begin{align}
(\partial_{p_0}\ast\partial_{q_0}-\partial_{q_0}\ast\partial_{p_0})(f\otimes f)=x_0\otimes x_0 y_0\otimes y_0-x_0\otimes x_0 y_0\otimes y_0=0.
\label{eq:FusionRHSTrivialActionExtraMorphism}
\end{align}
To finalize the proof note that $f$ together with $x_1,\dots, x_M$ and $y_1,\dots,y_N$ generate $\mathcal C'$.
From (\ref{eq:FusionRHSTrivialActionDisjoint}) and (\ref{eq:FusionRHSTrivialActionExtraMorphism}) we conclude that the right hand side of the (\ref{eq:Fusion}) acts trivially on $\mathcal C'$.
\end{proof}

Analogues of Lemma \ref{lemm:Fusion} and Proposition \ref{prop:GluingBivector} hold for gluing two different segments of the boundary of the same surface matching the marked points. We omit the proof here, because it involves more cumbersome notation, yet, is essentially identical to the two-component case.

Note that Lemma \ref{lemm:AddingRemovingPoint} together with formula (\ref{eq:GluingBivector}) provides a constructive way of calculating the double bracket on a surface glued from many elementary pieces. Our calculations in Section \ref{sec:PlanarAndCylindricalNetworks} will be based on the following
\begin{figure}
\begin {tikzpicture}
    \fill [gray, opacity=0.3] (0,-1) -- (0,1) -- (-1,1) -- (-1,-1) -- cycle;
    \draw (0,-1) -- (0,1);

    \draw (-0.5,0.7) node {$\Sigma$};

    \draw [fill=black] (0,0) circle (0.05);

    \draw (0.25,0) node {\small $p_1$};

    \draw [-latex] (0,0) -- (-1,-0.3);
    \draw [-latex] (0,0) -- (-1,0.1);
    \draw [-latex] (0,0) -- (-1,0.4);

    \fill [gray, opacity=0.3] (1,-1) -- (1,1) -- (2,1) -- (2,-1) -- cycle;
    \draw (1,-1) -- (1,1);

    \draw (1.5,0.7) node {$\Sigma$};

    \draw (1,0) [fill=black] circle (0.05);

    \draw (0.75,0) node {\small $p_2$};

    \draw [-latex] (1,0) -- (2,0.2);
    \draw [-latex] (1,0) -- (2,-0.2);


    \draw [-latex] (2.5,0) -- (3.5,0);


    \fill [gray, opacity=0.3] (4,-1) -- (6,-1) -- (6,1) -- (4,1) -- cycle;

    \draw [dashed] (5,-1) -- (5,1);

    \draw (4.5,0.7) node {$\Sigma'$};

    \draw [fill=black] (5,0) circle (0.05);

    \draw (5.3,0.25) node {\small $p_{1,2}$};

    \draw [-latex] (5,0) -- (4,-0.3);
    \draw [-latex] (5,0) -- (4,0.1);
    \draw [-latex] (5,0) -- (4,0.4);
    \draw [-latex] (5,0) -- (6,0.2);
    \draw [-latex] (5,0) -- (6,-0.2);

\end {tikzpicture}
\caption{Gluing matching internal marked point.}
\label{fig:GluingInternal}
\end{figure}
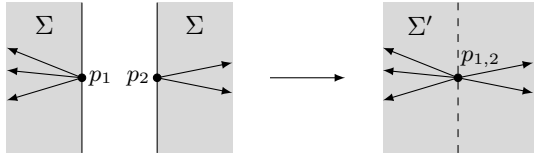

\begin{corollary}
Let $\Sigma$ be an oriented surface with arbitrary number of connected components.\footnote{For our purpose we actually need only one or two connected components.} Assume further that $\Sigma$ has nonempty boundary with $m\geqslant3$ marked points $p_1,\dots,p_m\in\partial\Sigma$ and consider a surface $\Sigma'$ obtained by gluing a segment of $\partial\Sigma$ near $p_1$ and $p_2$ matching the two marked points as shown on Figure \ref{fig:GluingInternal}.

For any choice of generating arcs $x_1,\dots,x_k\in\mathbf\pi_1(\Sigma,p_1,\dots,p_m)$ and  $y_1,\dots,y_l\in\mathbf k\pi_1(\Sigma',p_3,\dots,p_m)$, where
\begin{align*}
y_i=F_i(x_1,\dots,x_k),\qquad 1\leqslant i\leqslant l
\end{align*}
we have
\begin{align}
\db{y_i,y_j}_{\Sigma'}=\db{F_i(x_1,\dots,x_k),F_j(x_1,\dots,x_k)}_{\Sigma},\qquad 1\leqslant i,j\leqslant l.
\label{eq:GluingCommute}
\end{align}
\label{cor:GluingCommute}
\end{corollary}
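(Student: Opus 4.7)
The proof should reduce to the self-gluing analogues of Lemma~\ref{lemm:Fusion} and Proposition~\ref{prop:GluingBivector} that the paragraph preceding the corollary promises hold by an argument parallel to the two-surface case. Writing $\tilde{\mathcal{C}} = \mathbf{k}\pi_1(\Sigma', p_{1,2}, p_3, \dots, p_m)$ and $\mathcal{C}'' = \mathbf{k}\pi_1(\Sigma', p_3, \dots, p_m)$ for its full subcategory obtained by forgetting the glued marked point, the plan is first to express $P_{\Sigma'} - P_\Sigma$ as a combination of uniderivations at $p_1, p_2$, and then to show that this difference annihilates $\mathcal{C}''$.

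For the first step I would fix generating arcs $x_1, \dots, x_k$ of $\mathbf{k}\pi_1(\Sigma, p_1, \dots, p_m)$, which also freely generate $\tilde{\mathcal{C}}$. All contributions $P_{p_l}$ for $l \geq 3$ are identical in $P_\Sigma$ and $P_{\Sigma'}$, so everything reduces to comparing $P_{p_{1,2}}^{\Sigma'}$ with $P_{p_1}^\Sigma + P_{p_2}^\Sigma$. Because the ordered set of half-edges at $p_{1,2}$ is the concatenation of those at $p_2$ with those at $p_1$ (in some order determined by the orientation of the gluing), the diagonal sums inside (\ref{eq:BivectorContribution}) cancel and the surviving cross-terms assemble exactly as in the proof of Lemma~\ref{lemm:Fusion}, yielding
\[ P_{\Sigma'} - P_\Sigma = \tfrac{1}{2}\bigl(\partial_{p_1} \ast \partial_{p_2} - \partial_{p_2} \ast \partial_{p_1}\bigr). \]

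For the second step I would repeat, essentially verbatim, the argument in the proof of Proposition~\ref{prop:GluingBivector}: choose a generating set of $\mathcal{C}''$ consisting of arcs supported away from $p_1, p_2$, on which both $\partial_{p_1}$ and $\partial_{p_2}$ act trivially by (\ref{eq:UniderivationV}), together with a minimal number of auxiliary ``bridge'' arcs $f = x_0 y_0$ linking marked points in $\{p_3,\dots,p_m\}$ through $p_1$ and $p_2$. For each such $f$ the computation (\ref{eq:FusionRHSTrivialActionExtraMorphism}) shows that the antisymmetric combination $(\partial_{p_1}\ast\partial_{p_2} - \partial_{p_2}\ast\partial_{p_1})(f \otimes f)$ vanishes, and the same kind of cancellation handles mixed pairs.

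The conclusion follows immediately. Since $(P_{\Sigma'} - P_\Sigma)|_{\mathcal{C}''} = 0$ and $y_i, y_j \in \mathcal{C}''$, the biderivations of $P_{\Sigma'}$ and $P_\Sigma$ agree on the pair $(y_i, y_j)$. Substituting $y_i = F_i(x_1, \dots, x_k)$ and expanding via the double Leibniz identities (\ref{eq:DoubleLeibnizIdentityI})--(\ref{eq:DoubleLeibnizIdentityII}) then gives (\ref{eq:GluingCommute}). The only genuine technical step is the self-gluing version of Lemma~\ref{lemm:Fusion}; I expect the main obstacle to be purely notational, namely careful bookkeeping of the cyclic order of half-edges at the identified vertex $p_{1,2}$ (especially when the two segments glued lie on the same boundary component of $\Sigma$), but no new conceptual input beyond the two-surface case is needed.
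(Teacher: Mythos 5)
Your proposal is correct and follows exactly the route the paper intends: the paper states the corollary without a written proof, deferring to the self-gluing analogues of Lemma~\ref{lemm:Fusion} and Proposition~\ref{prop:GluingBivector} (which it explicitly declines to write out, calling them ``essentially identical to the two-component case'') together with Lemma~\ref{lemm:AddingRemovingPoint}. Your reconstruction of the self-gluing fusion formula, the triviality of $\tfrac12(\partial_{p_1}\ast\partial_{p_2}-\partial_{p_2}\ast\partial_{p_1})$ on the subcategory, and the final Leibniz expansion is precisely the omitted argument, including the correct identification of the half-edge bookkeeping at $p_{1,2}$ as the only notational subtlety.
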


In other words, we can think of the common image $p_{1,2}\in\Sigma'$ of boundary points $p_1,p_2\in\partial\Sigma$ as an internal point located on the fixed cut of $\Sigma'$ and still define the biderivation
\begin{align*}
\db{-,-}_{\Sigma_\#'}:\quad\mathcal C_{\#}'\otimes \mathcal C_{\#}'\rightarrow \mathcal C_{\#}'\otimes\mathcal C_{\#}',\qquad\textrm{where}\quad\mathcal C_{\#}'=\mathbf k\pi_1(\Sigma',p_{1,2},p_3,\dots,p_m).
\end{align*}
by simply computing the double bracket between generating arcs on $\Sigma$. This biderivation coincides with the usual double bracket on a subcategory $\mathcal C'=\mathbf k\pi_1(\Sigma',p_3,\dots,p_m)$
\begin{align*}
\db{-,-}_{\Sigma_\#'}\Big|_{\mathcal C'}=\db{-,-}_{\Sigma'}.
\end{align*}

\begin{remark}
Note, however, that $\db{-,-}_{\Sigma_\#'}$ records additional information about the cut. In particular, the trivector on the r.h.s. of the double quasi Jacobi identity for $\db{-,-}_{\Sigma_\#'}$ contains contributions from $p_1$ and $p_2$ respectively, which remarkably cancel each other on a subcategory $\mathcal C'$.
\end{remark}

\section{Planar and Cylindrical Networks}

\label{sec:PlanarAndCylindricalNetworks}

We now define our main objects of interest, which are certain graphs embedded on surfaces. This first
definition is due to Postnikov.

\subsection {Perfect Planar Networks and Boundary Measurements}

\begin {definition} \cite{Postnikov'2006}
    A \emph{perfect planar network} is a directed graph embedded in a disk, such that
    \begin {itemize}
        \item vertices on the boundary are univalent
        \item internal vertices are trivalent
        \item internal vertices are neither sources nor sinks
    \end {itemize}
\end {definition}

The definition implies that boundary vertices are necessarily either sources or sinks, and that internal vertices come in one of
two types: they have either a unique incoming edge or a unique outgoing edge. We picture the former as white vertices, and the latter as black.

In this paper we only consider networks with sources and sinks on the boundary which are separated from each other.
This means that all sources occur consecutively and all sinks occur consecutively. We may then picture the disk in such a way that the sources all occur on
the left side, and all sinks on the right. Suppose there are $m$ sources and $n$ sinks. We will use the convention of labelling the sources
$1,\dots,m$ in counter-clockwise order, and the sinks $1,\dots,n$ in clockwise order. This way, the indices increase from top to bottom.
An example is shown in Figure \ref{fig:perfect_planar_network}.

\begin {figure}
    \begin {tikzpicture}
        \coordinate (i1) at (-1,1);
        \coordinate (i2) at (-1,-1);
        \coordinate (i3) at (1,-1);
        \coordinate (i4) at (1,1);

        \coordinate (b1) at (-2.29,1);
        \coordinate (b2) at (-2.29,-1);
        \coordinate (b3) at (2.29,-1);
        \coordinate (b4) at (2.29,1);

        \draw [fill=black] (i1) circle (0.05cm);
        \draw [fill=white] (i2) circle (0.05cm);
        \draw [fill=black] (i3) circle (0.05cm);
        \draw [fill=white] (i4) circle (0.05cm);

        \draw [-latex] (b1) -- ($(i1) + (-0.05,0)$);
        \draw [-latex] (b2) -- ($(i2) + (-0.05,0)$);
        \draw [-latex] ($(i3) + (0.05,0)$) -- (b3);
        \draw [-latex] ($(i4) + (0.05,0)$) -- (b4);

        \draw [-latex] ($(i1) + (0.05,0)$) -- ($(i4) + (-0.05,0)$);
        \draw [-latex] ($(i2) + (0,0.05)$) -- ($(i1) + (0,-0.05)$);
        \draw [-latex] ($(i2) + (0.05,0)$) -- ($(i3) + (-0.05,0)$);
        \draw [-latex] ($(i4) + (0,-0.05)$) -- ($(i3) + (0,0.05)$);

        \draw [dashed] (0,0) circle (2.5);

        \draw ($(b1) + (-0.5,0)$) node {$1$};
        \draw ($(b2) + (-0.5,0)$) node {$2$};
        \draw ($(b3) + (0.5,0)$) node {$2$};
        \draw ($(b4) + (0.5,0)$) node {$1$};

        \draw (0,1.3) node {$d$};
        \draw (1.3,0) node {$c$};
        \draw (-1.4,0) node {$a$};
        \draw (0,-1.4) node {$b$};
    \end {tikzpicture}
    \caption {A perfect planar network}
    \label {fig:perfect_planar_network}
\end {figure}
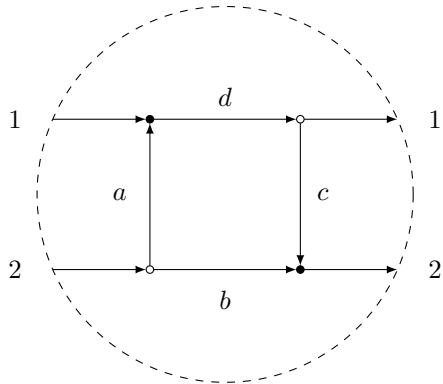

A path in $\Gamma$ can be written as a word in the generators of $\mathcal{C}$. If $p$ is a path, then we write $\mathrm{wt}(p)$
for the \emph{weight} of $p$, which is simply the corresponding word in the generators of $\mathcal{C}$.

\begin {definition}
    Suppose $\Gamma$ is an acyclic perfect planar network with $m$ sources and $n$ sinks on the boundary, which are separated from each other. We define an
    $m$-by-$n$ matrix with entries in $\mathcal{C}$, called the \emph{boundary measurement matrix}, and denoted $B_\Gamma$,
    as follows. The entry in position $i,j$ is the weight generating function for paths between source $i$ and sink $j$:
    \[ b_{ij} = \sum_{p \colon i \to j} \mathrm{wt}(p) \]
\end {definition}

\begin {remark}
    The definition for the boundary measurement matrix given here differs slightly from the one in \cite{Postnikov'2006}.
    Postnikov defined an $m \times (m+n)$ matrix where the extra columns indexed by the sources form the identity matrix,
    and also there were signs attached to each $b_{ij}$ so that all $m \times m$ minors are subtraction-free.
\end {remark}

For example, the network pictured in Figure \ref{fig:perfect_planar_network} has boundary measurement matrix
\[ B_\Gamma = \begin{pmatrix} d & dc \\ ad & b+adc \end{pmatrix} \]
The boundary measurement matrices behave nicely with respect to the gluing of disks matching sources and sinks. The following is well known.

\begin {proposition} \label{prop:bm_matrix_concatenation}
    Suppose $\Gamma_1$ and $\Gamma_2$ are both perfect planar networks with separated sources and sinks. Suppose $\Gamma_1$ has $m$ sources and $k$ sinks, and $\Gamma_2$
    has $k$ sources and $n$ sinks. Consider the network $\Gamma_3 = \Gamma_1 \# \Gamma_2$, which is obtained by glueing the sinks
    of $\Gamma_1$ to the sources of $\Gamma_2$. Then
    \[ B_{\Gamma_3} = B_{\Gamma_1} B_{\Gamma_2} \]
\end {proposition}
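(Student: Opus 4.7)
The plan is to establish a weight-preserving bijection between directed paths in $\Gamma_3$ from source $i$ to sink $j$ and pairs $(p_1,p_2)$ of paths, where $p_1$ runs from source $i$ to sink $l$ in $\Gamma_1$ and $p_2$ runs from source $l$ to sink $j$ in $\Gamma_2$, for some $1 \leqslant l \leqslant k$. Once this bijection is in place, the identity $B_{\Gamma_3} = B_{\Gamma_1} B_{\Gamma_2}$ is immediate from the definition of the boundary measurement matrix.

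The first step is to analyze the structure of the $k$ glued vertices. After identification, the $l$-th sink of $\Gamma_1$ coincides with the $l$-th source of $\Gamma_2$, forming a single vertex $v_l$ in $\Gamma_3$. Because boundary vertices of a perfect planar network are univalent, $v_l$ has exactly one incoming edge (coming from the $\Gamma_1$-side) and exactly one outgoing edge (going to the $\Gamma_2$-side), and no edges at $v_l$ cross from the $\Gamma_2$-side back to the $\Gamma_1$-side.

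The second step is to show that any path $p$ from source $i$ of $\Gamma_1$ to sink $j$ of $\Gamma_2$ in $\Gamma_3$ passes through exactly one glued vertex exactly once. Existence is clear, since the only edges connecting the $\Gamma_1$-half to the $\Gamma_2$-half of $\Gamma_3$ pass through a glued vertex. Uniqueness follows from the observation above: once the path has used the unique outgoing edge of some $v_l$, it is trapped on the $\Gamma_2$-side and cannot return, so it cannot visit any other $v_{l'}$ nor reuse $v_l$. This yields the canonical splitting $p = p_1 p_2$, where $p_1$ lies entirely in $\Gamma_1$ and ends at sink $l$, while $p_2$ lies entirely in $\Gamma_2$ and begins at source $l$. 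Conversely, any such pair concatenates to give a valid path in $\Gamma_3$, so the correspondence is a bijection.

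The final step is to sum weights. Using the juxtaposition convention $\mathrm{wt}(p_1 p_2) = \mathrm{wt}(p_1)\,\mathrm{wt}(p_2)$ from \eqref{eq:ConcatenationLinear}, and being careful to preserve the order since the weights take values in the noncommutative category $\mathcal{C}$, we obtain
\[
(B_{\Gamma_3})_{ij} \;=\; \sum_{p \colon i \to j} \mathrm{wt}(p) \;=\; \sum_{l=1}^k \sum_{\substack{p_1 \colon i \to l \\ \text{in } \Gamma_1}} \sum_{\substack{p_2 \colon l \to j \\ \text{in } \Gamma_2}} \mathrm{wt}(p_1)\,\mathrm{wt}(p_2) \;=\; \sum_{l=1}^k (B_{\Gamma_1})_{il}\,(B_{\Gamma_2})_{lj},
\]
which is precisely the $(i,j)$ entry of $B_{\Gamma_1}B_{\Gamma_2}$. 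No step presents a real obstacle; the only point requiring care is the noncommutative ordering of weights, which happens to match the usual left-to-right matrix multiplication convention exactly because $p_1$ precedes $p_2$ in the concatenation.
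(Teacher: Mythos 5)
Your argument is correct and is precisely the standard path-decomposition argument that the paper implicitly relies on (the paper states this proposition as ``well known'' and gives no proof). The key points --- that every source-to-sink path in $\Gamma_3$ crosses exactly one glued vertex exactly once because the univalent boundary vertices become one-in/one-out after gluing, and that the noncommutative concatenation order $\mathrm{wt}(p_1)\,\mathrm{wt}(p_2)$ matches the matrix-product order --- are exactly the points that need checking, and you handle both.
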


\subsection {Ribbon Graphs}

Given a perfect planar network $\Gamma$, we construct the \emph{ribbon graph} (also called \emph{fat graph}), which we denote
by $\Sigma_\Gamma$. It is a surface which is homotopy equivalent to $\Gamma$ formed by thickening the graph. More precisely,
there is a disk for each vertex of $\Gamma$, and a rectangle (or ribbon) for each edge, and the ribbons are glued to the disks
according to the cyclic order determined by $\Gamma$'s embedding in the plane. In such a case, $\Gamma$ is said to be a \emph{spine}
of the corresponding surface $\Sigma_\Gamma$. For each vertex of $\Gamma$, we put a marked point on the boundary of $\Sigma_\Gamma$.
At white vertices, the unique incoming edge is \emph{last} in the counter-clockwise order, and at black vertices, the unique outgoing
edge is last in the counter-clockwise order. This is depicted in Figure \ref{fig:ribbon_graph_pieces}.

\begin {figure}
\centering
\begin {tikzpicture} [scale=2.0]
    \draw [dashed] (0,0) circle (1.0);

    \draw (-0.98,-0.2) to [out=11, in=108.5] (0.32,-0.95);
    \draw [rotate=120] (-0.98,-0.2) to [out=11, in=108.5] (0.32,-0.95);
    \draw [rotate=-120] (-0.98,-0.2) to [out=11, in=108.5] (0.32,-0.95);

    \draw [fill=white] ($0.37*(-0.5,-0.866)$) circle (0.05);

    \draw [-latex] (-1,0) to [out=0, in=130] ($0.37*(-0.5,-0.866) + 0.04*(-1,1)$);
    \draw [-latex] ($0.32*(-0.5,-0.866)$) -- (0.5,0.866);
    \draw [-latex] ($0.37*(-0.5,-0.866) + 0.05*(1,0)$) to [out=20, in=120] (0.5,-0.866);


    \draw [dashed] (3,0) circle (1.0);

    \draw [shift={(3,0)}, rotate=60]  (-0.98,-0.2) to [out=11, in=108.5] (0.32,-0.95);
    \draw [shift={(3,0)}, rotate=180] (-0.98,-0.2) to [out=11, in=108.5] (0.32,-0.95);
    \draw [shift={(3,0)}, rotate=-60] (-0.98,-0.2) to [out=11, in=108.5] (0.32,-0.95);

    \draw [fill=black] ($(3,0) + 0.37*(0.5,0.866)$) circle (0.05);

    \draw [-latex, shift={(3,0)}] (-0.5,0.866) to [out=-60, in=-180] ($0.37*(0.5,0.866) + 0.04*(-1,-1)$);
    \draw [-latex, shift={(3,0)}] (-0.5,-0.866) -- ($0.32*(0.5,0.866)$);
    \draw [-latex, shift={(3,0)}] ($0.37*(0.5,0.866) + 0.05*(0,-1)$) to [out=-60, in=180] (1,0);
\end {tikzpicture}
\caption {Building blocks of the ribbon graph $\Sigma_\Gamma$}
\label {fig:ribbon_graph_pieces}
\end {figure}
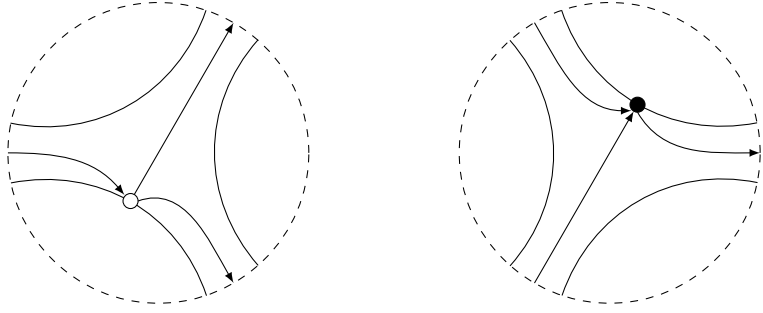

As described in the previous section, this gives a double bracket on the category
$\mathcal{C}$, consisting of paths in $\Gamma$. The set of edges in $\Gamma$ is a generating set for $\mathcal{C}$. In terms of these generators, the double bracket given in Equation \ref{eq:DoubleBracketRibbonGraph}
can be described locally at each vertex of $\Gamma$ by the following.

Let $f$ and $g$ be two directed edges in $\Gamma$ which are incident to a common vertex $V$, with $f < g$ in the total order. The contribution to the double bracket (\ref{eq:DoubleBracketRibbonGraph}) from the vertex $V$ has the following form
\begin {itemize}
    \item If $s(f) = s(g)$, then $\db{f,g} = \frac{1}{2} \, f \otimes g$
    \item If $t(f) = t(g)$, then $\db{f,g} = \frac{1}{2} \, g \otimes f$
    \item If $t(f) = s(g)$, then $\db{f,g} = -\frac{1}{2} \, \mathbf{1}_V \otimes fg$
\end {itemize}
The double bracket on $\mathcal{C}$ can then be computed by adding up contributions from all internal vertices of $\Gamma$.

\subsection {A Family of Double Brackets}

In addition to the double bracket described in the previous section, there is a natural family of double brackets
depending on $6$ parameters. It is a noncommutative generalization of the $6$-parameter family of Poisson brackets
from \cite{GSV'2009}, and was described in \cite{Ovenhouse'2019}.

Consider elementary building blocks of planar networks depicted on Figure \ref{fig:ribbon_graph_pieces}. For each of the building blocks we have three different choices for boundary component where we can put white (respectively black) vertex, as shown on Figure \ref{fig:ThreeChoices}. On the other hand, by Lemma \ref{lemm:AddingRemovingPoint}, each of the three choices gives rise to one and the same bracket on the subcategory $\mathbf k\pi_1(\Sigma_W,p_1,p_2,p_3)$ of paths starting/terminating at $p_1,p_2,p_3$. Indeed, for the case shown on Figure \ref{fig:ThreeChoices} we have
\begin{align*}
\mathbf k\pi_1(\Sigma,p_1,p_2,p_3)=&\mathbf k\langle l_{32},l_{31}\rangle,\qquad l_{31}=x_3x_1,\quad l_{32}=x_3x_2,\\
\db{l_{31},l_{32}}_a=&\db{l_{31},l_{32}}_b=\db{l_{31},l_{32}}_c=\frac12l_{31}\otimes l_{32}.
\end{align*}

This means that we may remove the white vertex from the boundary, and instead think of this local picture as having two paths
which enter at $p_3$, and exit at $p_1$ and $p_2$. All three choices of bracket above, corresponding to different placements
of the white vertex at different points on the boundary, induce the same bracket in this new picture. In this way, we may remove all internal vertices, so that the only marked boundary points on our ribbon graph correspond to the
vertices of $\Gamma$ on the boundary of the disk.

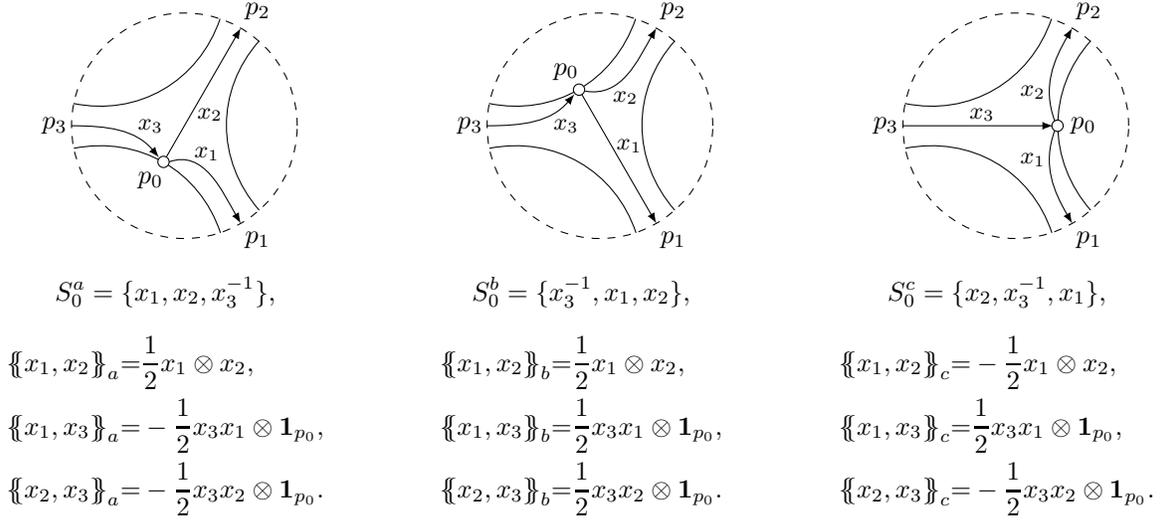
\begin{figure}
\begin{subfigure}{0.3\linewidth}
\centering
\begin {tikzpicture} [scale=1.5]
    \draw [dashed] (0,0) circle (1.0);

    \draw               (-0.98,-0.2) to [out=11, in=108.5] (0.32,-0.95);
    \draw [rotate=120]  (-0.98,-0.2) to [out=11, in=108.5] (0.32,-0.95);
    \draw [rotate=-120] (-0.98,-0.2) to [out=11, in=108.5] (0.32,-0.95);

    \draw [fill=white] ($0.37*(-0.5,-0.866)$) circle (0.05);

    \draw [-latex] (-1,0) to [out=0, in=130] ($0.37*(-0.5,-0.866) + 0.04*(-1,1)$);
    \draw [-latex] ($0.32*(-0.5,-0.866)$) -- (0.5,0.866);
    \draw [-latex] ($0.37*(-0.5,-0.866) + 0.05*(1,0)$) to [out=20, in=120] (0.5,-0.866);

    \draw (0.65,-1.016) node {$p_1$};
    \draw (0.65,1.016)  node {$p_2$};
    \draw (-1.15,0)     node {$p_3$};

    \draw (-0.3,-0.483) node {$p_0$};

    \draw (0.2,-0.25) node {\small $x_1$};
    \draw (0.225,0.1) node {\small $x_2$};
    \draw (-0.3,0)    node {\small $x_3$};
\end {tikzpicture}
\begin{align*}
S_0^a=\{x_1,x_2,x_3^{-1}\},
\end{align*}
\begin{align*}
\db{x_1,x_2}_a=&\frac12x_1\otimes x_2,\\
\db{x_1,x_3}_a=&-\frac12x_3x_1\otimes\mathbf{1}_{p_0},\\
\db{x_2,x_3}_a=&-\frac12x_3x_2\otimes\mathbf{1}_{p_0}.
\end{align*}
\end{subfigure}
\begin{subfigure}{0.3\linewidth}
\centering
\begin {tikzpicture} [scale=1.5]
    \draw [dashed] (0,0) circle (1.0);

    \draw               (-0.98,-0.2) to [out=11, in=108.5] (0.32,-0.95);
    \draw [rotate=120]  (-0.98,-0.2) to [out=11, in=108.5] (0.32,-0.95);
    \draw [rotate=-120] (-0.98,-0.2) to [out=11, in=108.5] (0.32,-0.95);

    \draw [fill=white] ($0.37*(-0.5,0.866)$) circle (0.05);

    \draw [-latex] (-1,0) to [out=0, in=230] ($0.37*(-0.5,0.866) + 0.04*(-1,-1)$);
    \draw [-latex] ($0.32*(-0.5,0.866) + 0.03*(1,1)$) to [out=-10, in =240] (0.5,0.866);
    \draw [-latex] ($0.37*(-0.5,0.866) + 0.05*(0.5,-0.866)$) -- (0.5,-0.866);

    \draw (0.65,-1.016) node {$p_1$};
    \draw (0.65,1.016)  node {$p_2$};
    \draw (-1.15,0)     node {$p_3$};

    \draw (-0.3,0.483)  node {$p_0$};

    \draw (0.26,-0.18)  node {\small $x_1$};
    \draw (0.225,0.25) node {\small $x_2$};
    \draw (-0.3,0)     node {\small $x_3$};
\end {tikzpicture}
\begin{align*}
S_0^b=\{x_3^{-1},x_1,x_2\},
\end{align*}
\begin{align*}
\db{x_1,x_2}_b=&\frac12x_1\otimes x_2,\\
\db{x_1,x_3}_b=&\frac12x_3x_1\otimes\mathbf{1}_{p_0},\\
\db{x_2,x_3}_b=&\frac12x_3x_2\otimes\mathbf{1}_{p_0}.
\end{align*}
\end{subfigure}
\begin{subfigure}{0.3\linewidth}
\centering
\begin {tikzpicture} [scale=1.5]
    \draw [dashed] (0,0) circle (1.0);

    \draw               (-0.98,-0.2) to [out=11, in=108.5] (0.32,-0.95);
    \draw [rotate=120]  (-0.98,-0.2) to [out=11, in=108.5] (0.32,-0.95);
    \draw [rotate=-120] (-0.98,-0.2) to [out=11, in=108.5] (0.32,-0.95);

    \draw [fill=white] (0.37,0) circle (0.05);

    \draw [-latex] (-1,0) -- ($(0.37,0) - 0.04*(1,0)$);
    \draw [-latex] ($(0.37,0) + 0.05*(-0.5,0.866)$)  to [out=110, in=240] (0.5,0.866);
    \draw [-latex] ($(0.37,0) + 0.05*(-0.5,-0.866)$) to [out=-110, in=120] (0.5,-0.866);

    \draw (0.65,-1.016) node {$p_1$};
    \draw (0.65,1.016)  node {$p_2$};
    \draw (-1.15,0)     node {$p_3$};

    \draw (0.6,0) node {$p_0$};

    \draw (0.15,-0.3) node {\small $x_1$};
    \draw (0.15,0.3)  node {\small $x_2$};
    \draw (-0.3,0.1)  node {\small $x_3$};
\end {tikzpicture}
\begin{align*}
S_0^c=\{x_2,x_3^{-1},x_1\},
\end{align*}
\begin{align*}
\db{x_1,x_2}_c=&-\frac12x_1\otimes x_2,\\
\db{x_1,x_3}_c=&\frac12x_3x_1\otimes\mathbf{1}_{p_0},\\
\db{x_2,x_3}_c=&-\frac12x_3x_2\otimes\mathbf{1}_{p_0}.
\end{align*}
\end{subfigure}
\caption{Three choices of boundary component for white vertex}
\label{fig:ThreeChoices}
\end{figure}

As in the previous section, we fix a perfect planar network $\Gamma$ and the associated
ribbon surface $\Sigma_\Gamma$. Choose three scalars $w_{12}$, $w_{13}$, and $w_{23}$. At each white vertex,
let $x_1$, $x_2$, $x_3$ be the directed edges (in order) as depicted in Figure \ref{fig:ribbon_graph_pieces},
and define a double bracket by
\[ \db{x_1,x_2} = w_{12} (x_1 \otimes x_2), \quad \db{x_1,x_3} = w_{13} (x_3x_1 \otimes \mathbf{1}_V) \quad \db{x_2,x_3} = w_{23} (x_3x_2 \otimes \mathbf{1}_V) \]
Also choose three scalars $b_{12}$, $b_{13}$, and $b_{23}$, and at each black vertex, label the edges
from Figure \ref{fig:ribbon_graph_pieces} $y_1$, $y_2$, $y_3$, and define
\[ \db{y_1,y_2} = b_{12} (y_2 \otimes y_1), \quad \db{y_1,y_3} = b_{13} (\mathbf{1}_V \otimes y_1y_3) \quad \db{y_2,y_3} = b_{23} (\mathbf{1}_V \otimes y_2y_3) \]
We then compute a double bracket on general paths in $\Gamma$ using the double Leibniz identity (\ref{eq:DoubleLeibnizIdentityI})--(\ref{eq:DoubleLeibnizIdentityII}).
It follows from Proposition \ref{prop:GluingBivector} that the resulting double bracket on the subcategory of paths staring at sources and terminating at sinks must coincide with the surface double bracket on $\mathbf k\pi_1(\Sigma_\Gamma,p_1,\dots,p_m,q_1,\dots,q_n)$.

\begin {remark}
    The double bracket described in the previous section corresponds to the choices:
    \begin {align*}
        w_{12} &= \frac{1}{2} & w_{13} &= -\frac{1}{2} & w_{23} &= -\frac{1}{2} \\
        b_{12} &= \frac{1}{2} & b_{13} &= -\frac{1}{2} & b_{23} &= -\frac{1}{2}
    \end {align*}
\end {remark}

We will now describe a formula for the double bracket between paths, and then note that
some choices of the $w_{ij}$ and $b_{ij}$ give essentially the same bracket. We will use this
observation to choose a more convenient and simple set of $w_{ij}$ and $b_{ij}$. First we introduce some notation.

Define the quantities $B = b_{12}+b_{13}-b_{23}$ and $W = w_{12}+w_{13}-w_{23}$.
We will see that the double bracket can be expressed nicely in terms of $B$ and $W$.
If a path $f$ passes through an internal vertex $V$,
we write $f_V'$ for the part of $f$ ending at $V$, and $f_V''$ for the part after $V$, so that $f = f_V' f_V''$. Suppose two
paths $f$ and $g$ pass through a common vertex $V$, as in figure \ref{fig:paths_meet}.

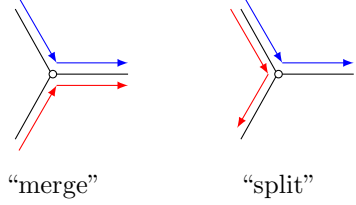
\begin {figure}
\centering
\caption {Merging and splitting of paths}
\label {fig:paths_meet}

\begin {tikzpicture}


    \draw [fill=white] (0,0) circle (0.05cm);
    \draw (0.05,0) -- (1,0);
    \draw (-0.5,0.866) -- (2.965-3,0.035);
    \draw (-0.5,-0.866) -- (2.965-3,-0.035);

    \draw (0,-1.5) node {``merge''};

    \draw [blue, -latex] (0.05,0.15) -- (1.0,0.15);
    \draw [blue, -latex] (-0.45,1.016) -- (0.05,0.15);

    \draw [red, -latex] (0.05,-0.15) -- (1.0,-0.15);
    \draw [red, -latex] (-0.45,-1.016) -- (0.05,-0.15);


    \begin {scope} [shift = {(3,0)}]
        \draw [fill=white] (0,0) circle (0.05cm);
        \draw (0.05,0)      -- (1,0);
        \draw (-0.5,0.866)  -- (2.965-3,0.035);
        \draw (-0.5,-0.866) -- (2.965-3,-0.035);

        \draw (0,-1.5) node {``split''};

        \draw [blue, -latex] (-0.45,1.016) -- (0.05,0.15);
        \draw [blue, -latex] (0.05,0.15)   -- (1.00,0.15);

        \draw [red, -latex] (-0.63661,0.866) -- (-0.13661,0);
        \draw [red, -latex] (-0.13661,0) -- (-0.55,-0.716);
    \end {scope}

\end {tikzpicture}
\end {figure}

We define the
quantity $\alpha_V(f,g)$, depending on the color of $V$ and how $f$ and $g$ meet, given in the table below.
In the table, $f$ is the blue path (on the left) and $g$ is red (on the right) from figure \ref{fig:paths_meet}.

\begin {center}
\begin {tabular}{|c|c|c|} \hline
    color     & type  & $\alpha_V(f,g)$ \\ \hline
    $\circ$   & merge & $W$ \\
    $\bullet$ & merge & $B$ \\
    $\circ$   & split & $-W$ \\
    $\bullet$ & split & $-B$ \\ \hline
\end {tabular}
\end {center}

\begin {lemma} \cite{Ovenhouse'2020}
    Let $f$ and $g$ be two paths, which begin and end on the boundary of the disk. Their double bracket is given by
    \[ \db{f,g} = \sum_V \alpha_V(f,g) \, (g_V' f_V'' \otimes f_V' g_V'') \]
    The sum is over all vertices $V$ where $f$ and $g$ either merge or split.
\end {lemma}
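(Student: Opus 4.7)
The plan is to expand the double bracket via the Leibniz identities and reduce to a sum of local vertex contributions. Writing $f = e_1 \cdots e_m$ and $g = e'_1 \cdots e'_n$ as products of directed edges of $\Gamma$, iterated use of (\ref{eq:DoubleLeibnizIdentityI}) and (\ref{eq:DoubleLeibnizIdentityII}) gives
\[
\db{f,g} \;=\; \sum_{i,j} \bigl(e'_1 \cdots e'_{j-1} \otimes e_1 \cdots e_{i-1}\bigr)\,\db{e_i, e'_j}\,\bigl(e_{i+1} \cdots e_m \otimes e'_{j+1} \cdots e'_n\bigr).
\]
The local rules for the bracket at a white or black vertex show that $\db{e_i, e'_j}$ has a nonzero contribution at an internal vertex $V$ only when both $e_i$ and $e'_j$ are incident to $V$, so the terms group according to internal vertices through which $f$ and $g$ both pass.

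Next I would classify the joint behaviour of $f$ and $g$ at each such common vertex $V$. Since $V$ is trivalent and is neither a source nor a sink, each path enters $V$ along a unique edge and exits along a unique edge. At a white vertex (one incoming, two outgoing edges) this forces the interaction to be either a \emph{pass-through together} (same incoming edge and same outgoing edge for both paths) or a \emph{split} (shared incoming edge, distinct outgoing edges); at a black vertex, symmetrically, the only options are pass-through or \emph{merge}. In each case the Leibniz expansion contributes four edge pairs $(e_i, e'_j)$ at $V$, namely those where $e_i$ and $e'_j$ are each one of the two edges used by the corresponding path at $V$. The two diagonal pairs with $e_i = e'_j$ give zero locally, because the bivector (\ref{eq:BivectorContribution}) only contains cross-terms $\partial_{x_a} \otimes \partial_{x_b}$ with $a \ne b$. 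The remaining two cross pairs are computed directly from the local formulas $\db{x_a, x_b}$ (or $\db{y_a, y_b}$), taking the appropriate skew-transposes when $a > b$, and multiplied by the correct Leibniz prefix/suffix tensors.

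Performing these computations, the two surviving terms in the pass-through case cancel exactly, while in the split case they combine, together with the $\db{x_1, x_2}$ contribution, into $\pm(w_{12} + w_{13} - w_{23})\,(g'_V f''_V \otimes f'_V g''_V) = \pm W\,(g'_V f''_V \otimes f'_V g''_V)$; the merge case at a black vertex yields $\pm B\,(g'_V f''_V \otimes f'_V g''_V)$ by the same pattern. The sign is dictated by the cyclic order of the outgoing (respectively incoming) edges used by $f$ versus $g$, which reproduces the four lines of the $\alpha_V$ table. Summing these local contributions over all internal vertices gives the formula in the lemma.

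The main obstacle is the careful bookkeeping of the Leibniz prefix/suffix tensors together with the sign changes coming from skew-symmetry in the four-term analysis at each vertex. Once it is handled correctly, the definitions $W = w_{12} + w_{13} - w_{23}$ and $B = b_{12} + b_{13} - b_{23}$ emerge precisely as the linear combinations forced by the pass-through cancellation and by the uniform description of the split/merge coefficient, which is the conceptual reason those particular sums show up in the statement.
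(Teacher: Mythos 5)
The paper does not actually prove this lemma --- it is quoted from \cite{Ovenhouse'2020} --- but your argument (Leibniz expansion of $\db{f,g}$ over pairs of edges, regrouping by the common internal vertex, and the case analysis of pass-through versus split/merge) is correct and is the expected proof: the three surviving cross terms at a split white vertex do sum to $w_{12}+w_{13}-w_{23}=W$ (and dually $B$ at a black merge), the pass-through terms cancel, and the sign matches the cyclic-order convention of the table. The only slip is bookkeeping: in the split/merge case just one of the four edge pairs is diagonal, so there are three cross pairs rather than two --- but since you explicitly add the $\db{x_1,x_2}$ contribution separately, the computation still closes correctly.
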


The significance of this lemma is the following. We will primarily be concerned with paths which begin at sources on the boundary
and terminate at sinks on the boundary. In this case, the double bracket between two paths depends only on the quantities $B$ and $W$, and
not on the individual values of $w_{ij}$ and $b_{ij}$. We can therefore choose different values of these constants which give the same $B$
and $W$, and the double bracket will be the same on paths between boundary vertices. As was mentioned earlier, the double bracket on $\Sigma_\Gamma$
described in the previous section has the values $B = W = \frac{1}{2}$. According to the lemma, we may locally define the bracket
by setting $w_{12} = b_{12} = \frac{1}{2}$, and the rest of the $w_{ij}$ and $b_{ij}$ to zero, without changing the double brackets
between paths connecting boundary vertices.

\subsection {The Twisted Ribbon Graph}

Given a perfect planar network $\Gamma$, we constructed the ribbon graph $\Sigma_\Gamma$ in the previous section,
and saw that the particular double bracket on $\mathcal{C}$ with $B=W=\frac{1}{2}$ gave the standard Goldman bracket on $\Sigma_\Gamma$.
Now we construct another surface, closely related to $\Sigma_\Gamma$. It is called the \emph{twisted ribbon graph}, and
is denoted $\widehat{\Sigma}_\Gamma$. It is built in much the same way as $\Sigma_\Gamma$, except that for any edge connecting
two vertices of different colors, we give the ribbon a half-twist before glueing. In this way, we think of the black vertices
as being ``upside-down''.

It is easy to see that the double bracket for $\widehat{\Sigma}_\Gamma$, as described in Section 2, is given by the
member of our two-parameter family with $W = \frac{1}{2}$ and $B = - \frac{1}{2}$. As before, to make calculations simpler,
we will assume that $w_{12} = \frac{1}{2}$ and $b_{12} = -\frac{1}{2}$, with all other local brackets being zero.

From now on, unless stated otherwise, given a perfect planar network $\Gamma$, our standard choice of double bracket
will be this one, with $W = \frac{1}{2}$ and $B = -\frac{1}{2}$, corresponding to the surface $\widehat{\Sigma}_\Gamma$.

\subsection {Cylindrical Networks}

We now consider a version of perfect networks embedded on a cylinder (or annulus) instead of a disk.
These were studied in \cite{GSV'2012}.

\begin {definition}
    A directed graph $\Gamma$ embedded on a cylinder is called a \emph{perfect cylindrical network} if
    \begin {itemize}
        \item boundary vertices are univalent
        \item internal vertices are trivalent
        \item internal vertices are neither sources nor sinks
    \end {itemize}
\end {definition}

We make a simplifying assumption as in the planar case: we require that all sources are on one boundary component, and all sinks are on the other boundary component.
As before, we may form the ribbon surfaces $\Sigma_\Gamma$ and $\widehat{\Sigma}_\Gamma$, and consider the corresponding double brackets.
Again we take the bracket corresponding to $\widehat{\Sigma}_\Gamma$ as our standard choice.

In order to define boundary measurements, we introduce a new ingredient. Choose some smooth oriented curve whose endpoints are on different boundary components.
We call this curve the \emph{cut}. We require that the cut avoids the vertices of $\Gamma$, and intersects edges transversally. After cutting the cylinder
along this curve, it becomes a rectangle (or a disk). The condition that all sources are on one boundary component, and all sinks on the other,
guarantees that after making this cut, all sources are on the left edge of the resulting rectangle, and all sinks on the right edge. We then number the sources and sinks
in increasing order from top to bottom, as in the planar case.

\begin {definition}
    Let $\Gamma$ be a perfect cylindrical network with $m$ sources and $n$ sinks. We define the $m$-by-$n$ boundary measurement matrix $B_\Gamma(\lambda)$,
    whose entries are Laurent polynomials in the parameter $\lambda$, with coefficients in $\mathcal{C}$, as follows. The entries are
    \[ b_{ij}(\lambda) = \sum_{p \colon i \to j} \lambda^d \mathrm{wt}(p) \]
    The weight $\mathrm{wt}(p)$ is as before, and the exponent $d$ is the oriented intersection index of $p$ with the cut.
\end {definition}

The analogue of Propsition \ref{prop:bm_matrix_concatenation} is true in this case too. If $\Gamma_1$ has $m$ sources and $k$ sinks, and $\Gamma_2$
has $k$ sources and $n$ sinks, then we may glue the sink end of $\Gamma_1$ to the source end of $\Gamma_2$ to get another network $\Gamma$, and
\[ B_\Gamma(\lambda) = B_{\Gamma_1}(\lambda) B_{\Gamma_2}(\lambda) \]

\section{$r$-Matrix Bracket}

In this section, we will show that the double brackets between elements of the boundary measurement matrix $B_\Gamma$
can be described in terms of $r$-matrices. This generalizes results from \cite{GSV'2009} and \cite{GSV'2012},
where analogous $r$-matrix formulas were given in the commutative case, where the edges of the graph were weighted by scalars.

Hereinafter we use boxed times $\boxtimes$ for the tensor product of matrices (over $\mathcal C$, $\mathcal C\otimes\mathcal C$, or $\mathbf k$ depending on which matrices we apply it to). We reserve notation $\otimes=\otimes_{\mathbf k}$ for the tensor product of morphisms in $\mathcal C$. In addition, since $\mathcal C$ is $\mathbf k$-linear we can multiply elements of $\mathrm{Mat}_n(\mathcal C)$ and $\mathrm{Mat}_n(\mathcal C\otimes\mathcal C)$ by scalar matrices of an appropriate size.
\begin {definition}
    Let $\mathcal C$ be a small $\mathbf k$-linear category (a.k.a. algebra with many objects) equipped with a double bracket $\db{-,-}$. Define an operation
    \begin{align*}
    \mathrm{Mat}_n(\mathcal C) \times \mathrm{Mat}_n(\mathcal C) \to \mathrm{Mat}_{n}(\mathcal C \otimes \mathcal C)\boxtimes\mathrm{Mat}_n(\mathcal C\otimes\mathcal C),
    \end{align*} also denoted by $\db{-,-}$,
    as follows. For matrices $X,Y \in \mathrm{Mat}_n(\mathcal C)$:
    \[ \db{X,Y} = \sum_{ijk\ell} \db{x_{ij},y_{k\ell}} E_{ij} \boxtimes E_{k\ell}. \]
    Here, $E_{ij}$ are the elementary matrices with a $1$ in the $i,j$ place and zeros elsewhere.
\end {definition}

\begin {definition}
    For a matrix $M \in \mathrm{Mat}_n(\mathcal C)$, define the ``left and right'' versions of $M$ over $\mathcal C \otimes \mathcal C$ as follows.
    Let $M_L \in \mathrm{Mat}_n(\mathcal C \otimes \mathcal C)$ represent the matrix
    with $i,j$-entry $m_{ij} \otimes 1$.
    Similarly define $M_R$ to have entries $1 \otimes m_{ij}$.
\end {definition}

\begin {proposition} \label{prop:matrix_leibniz_identities}
    The $\db{-,-}$ operation on matrices satisfies the following Leibniz identities:
    \begin {enumerate}
        \item $\displaystyle \db{X,YZ} = \db{X,Y}(\mathbf 1 \boxtimes Z_R) + (\mathbf 1 \boxtimes Y_L) \db{X,Z},$\qquad\qquad whenever $Y,Z$ are composable,
        \item $\displaystyle \db{XY,Z} = \db{X,Z}(Y_L \boxtimes\mathbf 1) + (X_R \boxtimes\mathbf 1) \db{Y,Z},$\qquad\qquad whenever $X,Y$ are composable.
    \end {enumerate}
    Here $\mathbf 1\in\mathrm{Mat}_n(\mathbf k)$ stands for an identity matrix.
\end {proposition}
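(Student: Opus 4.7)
The plan is to reduce both identities to the entry-wise double Leibniz identities \eqref{eq:DoubleLeibnizIdentityI} and \eqref{eq:DoubleLeibnizIdentityII} by extracting a single ``block entry'' indexed by $((i,k),(j,\ell))$ from each matrix in $\mathrm{Mat}_{n}(\mathcal C\otimes\mathcal C)\boxtimes\mathrm{Mat}_n(\mathcal C\otimes\mathcal C)$ and comparing. Concretely, for the first identity write $(YZ)_{k\ell}=\sum_m y_{km}z_{m\ell}$ and compute
\begin{align*}
\db{X,YZ}_{(i,k),(j,\ell)}=\db{x_{ij},(YZ)_{k\ell}}=\sum_m\db{x_{ij},y_{km}z_{m\ell}},
\end{align*}
which by \eqref{eq:DoubleLeibnizIdentityII} applied termwise equals
\begin{align*}
\sum_m (y_{km}\otimes\mathbf 1)\db{x_{ij},z_{m\ell}}+\sum_m\db{x_{ij},y_{km}}(\mathbf 1\otimes z_{m\ell}).
\end{align*}

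The bulk of the argument is then to recognize these two sums as the $((i,k),(j,\ell))$-entries of $(\mathbf 1\boxtimes Y_L)\db{X,Z}$ and $\db{X,Y}(\mathbf 1\boxtimes Z_R)$ respectively. First I would unpack the Kronecker product conventions: the $((i,k),(i',k'))$-entry of $\mathbf 1\boxtimes Y_L$ is $\delta_{ii'}(y_{kk'}\otimes 1)$, and the $((j',\ell'),(j,\ell))$-entry of $\mathbf 1\boxtimes Z_R$ is $\delta_{j'j}(1\otimes z_{\ell'\ell})$. Using that multiplication in $\mathcal C\otimes\mathcal C$ is componentwise, a direct calculation of the two matrix products yields
\begin{align*}
\big[(\mathbf 1\boxtimes Y_L)\db{X,Z}\big]_{(i,k),(j,\ell)}&=\sum_{k'}(y_{kk'}\otimes 1)\cdot\db{x_{ij},z_{k'\ell}},\\
\big[\db{X,Y}(\mathbf 1\boxtimes Z_R)\big]_{(i,k),(j,\ell)}&=\sum_{\ell'}\db{x_{ij},y_{k\ell'}}\cdot(1\otimes z_{\ell'\ell}),
\end{align*}
which after reindexing $k'\mapsto m$ and $\ell'\mapsto m$ matches the two sums above term-by-term. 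The second identity is obtained in exactly the same way by expanding $(XY)_{ij}=\sum_m x_{im}y_{mj}$ and applying \eqref{eq:DoubleLeibnizIdentityI} entrywise; the left/right bookkeeping this time places $Y_L$ and $X_R$ in the first $\boxtimes$-slot.

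The main obstacle is purely notational: the various $\boxtimes$, $\otimes$, and $(-)_L,(-)_R$ operations need to be handled with care so that the positions of the Sweedler components on each side end up in the correct tensor slot after matrix multiplication. Once the entrywise computation is written down cleanly, nothing beyond \eqref{eq:DoubleLeibnizIdentityI}--\eqref{eq:DoubleLeibnizIdentityII} is needed; skew-symmetry and quasi-Jacobi do not enter. Composability of the path-products $y_{km}z_{m\ell}$ (respectively $x_{im}y_{mj}$) is exactly the hypothesis that $Y,Z$ (respectively $X,Y$) are composable as matrices, so the entrywise Leibniz identities apply at every summand.
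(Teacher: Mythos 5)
Your proof is correct: the paper states Proposition \ref{prop:matrix_leibniz_identities} without proof, and your entrywise reduction to the double Leibniz identities (\ref{eq:DoubleLeibnizIdentityI})--(\ref{eq:DoubleLeibnizIdentityII}), together with the bookkeeping of the Kronecker-product entries of $\mathbf 1\boxtimes Y_L$, $\mathbf 1\boxtimes Z_R$, $X_R\boxtimes\mathbf 1$, and $Y_L\boxtimes\mathbf 1$, is exactly the intended argument. All the index computations check out against the definition $\db{X,Y}=\sum_{ijk\ell}\db{x_{ij},y_{k\ell}}E_{ij}\boxtimes E_{k\ell}$ and the componentwise multiplication in $\mathcal C\otimes\mathcal C$.
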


\begin {corollary}
    For matrices $X$ and $Y$ over $\mathcal C$,
    \begin{equation}
        \db{X^k,Y^\ell} = \sum_{i=0}^{k-1} \sum_{j=0}^{\ell-1} (X_R^i \boxtimes Y_L^j) \db{X,Y} (X_L^{k-1-i} \boxtimes Y_R^{\ell-1-j}).
    \label{eq:MatrixDoubleLeibniz}
    \end{equation}
    \label{cor:MatrixDoubleLeibnizPowers}
\end {corollary}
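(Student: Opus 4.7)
The plan is to derive the formula by iterating the two Leibniz identities from Proposition \ref{prop:matrix_leibniz_identities}, first expanding in the second argument, then in the first. The key preliminary observation is that the assignments $M \mapsto M_L$ and $M \mapsto M_R$ from $\mathrm{Mat}_n(\mathcal{C})$ to $\mathrm{Mat}_n(\mathcal{C} \otimes \mathcal{C})$ are algebra homomorphisms, so that $(M^n)_L = M_L^n$ and $(M^n)_R = M_R^n$. This follows directly from the definitions: $(MN)_L = (MN) \otimes 1 = (M \otimes 1)(N \otimes 1) = M_L N_L$ as matrices over $\mathcal C \otimes \mathcal C$, and similarly for the right version.

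First I would prove by induction on $\ell$ that
\[
\db{X, Y^\ell} \;=\; \sum_{j=0}^{\ell-1} (\mathbf 1 \boxtimes Y_L^j)\, \db{X, Y}\, (\mathbf 1 \boxtimes Y_R^{\ell-1-j}).
\]
The base case $\ell = 1$ is immediate. For the inductive step, apply Leibniz identity (1) from Proposition \ref{prop:matrix_leibniz_identities} to $\db{X, Y \cdot Y^\ell}$, giving
\[
\db{X, Y^{\ell+1}} = \db{X,Y}(\mathbf 1 \boxtimes Y_R^\ell) + (\mathbf 1 \boxtimes Y_L)\db{X, Y^\ell},
\]
where I used $(Y^\ell)_R = Y_R^\ell$. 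Substituting the inductive hypothesis into the second term and reindexing, the first term supplies the $j=0$ summand and the remaining terms recover $j = 1, \dots, \ell$, completing the induction.

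Next, by a fully analogous induction on $k$ using Leibniz identity (2), I obtain
\[
\db{X^k, Z} \;=\; \sum_{i=0}^{k-1} (X_R^i \boxtimes \mathbf 1)\, \db{X, Z}\, (X_L^{k-1-i} \boxtimes \mathbf 1)
\]
for any $Z \in \mathrm{Mat}_n(\mathcal C)$. Setting $Z = Y^\ell$ and substituting the earlier expansion for $\db{X, Y^\ell}$ yields the claimed double sum, provided the factors $(X_R^i \boxtimes \mathbf 1)$ and $(\mathbf 1 \boxtimes Y_L^j)$ can be combined as $X_R^i \boxtimes Y_L^j$ (and similarly on the right). This last step is an immediate consequence of the interchange law for the boxed tensor product of matrices over $\mathcal C \otimes \mathcal C$: $(A \boxtimes \mathbf 1)(\mathbf 1 \boxtimes B) = A \boxtimes B = (\mathbf 1 \boxtimes B)(A \boxtimes \mathbf 1)$, which holds because $A_R^i$ lives in the first tensor factor of $\mathcal C \otimes \mathcal C$ while $Y_L^j$ lives in the second.

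The calculation is essentially bookkeeping; the only point that requires a moment's care is verifying that $(-)_L$ and $(-)_R$ respect matrix products (so that we can write $Y_R^\ell$ rather than $(Y^\ell)_R$ and iterate cleanly), and that factors with disjoint tensor-component supports commute inside $\mathrm{Mat}_n(\mathcal C \otimes \mathcal C)$. Both are elementary verifications from the definitions.
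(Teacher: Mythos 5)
Your proof is correct and is exactly the intended derivation: the paper states Corollary \ref{cor:MatrixDoubleLeibnizPowers} without proof as an immediate consequence of Proposition \ref{prop:matrix_leibniz_identities}, and the double induction you carry out --- first in the second argument, then in the first, using the multiplicativity of $(-)_L$, $(-)_R$ and the interchange identities for $\boxtimes$ --- is precisely how it follows. (One cosmetic slip: $X_R^i$ has entries of the form $1\otimes m$ and hence lives in the \emph{second} tensor factor of $\mathcal C\otimes\mathcal C$, while $Y_L^j$ lives in the first; the commutation you invoke still holds for exactly the reason you give.)
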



\subsection {$r$-Matrix Bracket for Planar Networks}

In this section, we show that double brackets for planar networks can be described in terms of an $r$-matrix formula using the boundary measurement matrix.
This was shown, in the commutative case, for square matrices (same number of sources and sinks), in \cite{GSV'2009}. (For quantum case see \cite{ChekhovShapiro'2020})

For each $n$, we define the element $r_n \in \mathrm{Mat}_n\mathbb Q \otimes \mathrm{Mat}_n\mathbb Q$ as
\begin{align}
 r_n = \frac{1}{2} \sum_{i < j} E_{ji} \boxtimes E_{ij} - E_{ij} \boxtimes E_{ji}
\label{eq:RMatrixDisc}
\end{align}

Our main theorem for planar networks is the following.

\begin {theorem} \label{thm:planar_r_matrix_formula}
    Suppose $\Gamma$ is an acyclic perfect planar network with $m$ sources and $n$ sinks separated from each other on the boundary. Denote the corresponding boundary measurement matrix $B = B_\Gamma$, then
    \begin {equation} \label{eq:twisted_commutator_formula}
        \db{B,B} \, = r_m (B_L \boxtimes B_R) - \Big( (B_L \boxtimes B_R) r_n \Big)^\tau
    \end {equation}
    In particular, if $B$ is square (with $m=n$), then
    \[ \db{B,B} \, = [r_n, B_L \boxtimes B_R]_\tau \]
    where $[A,B]_\tau=AB-(BA)^\tau$ stands for the twisted commutator.
\end {theorem}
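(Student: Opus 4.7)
The plan is to compute $\db{B, B}$ directly from the Ovenhouse paths lemma and reorganize the result into $r$-matrix form. Since the double brackets between paths connecting boundary vertices depend only on the aggregates $W = w_{12} + w_{13} - w_{23}$ and $B = b_{12} + b_{13} - b_{23}$ (as noted in the paper's preceding discussion), I first fix the convenient normalization $w_{12} = \tfrac{1}{2}$, $b_{12} = -\tfrac{1}{2}$ with all other local parameters zero. With this choice, the only nontrivial local bracket at each internal vertex is between its two ``symmetric'' edges (the two outgoing edges at a white vertex, or the two incoming edges at a black one), so the paths lemma yields
\begin{equation*}
\db{b_{ij}, b_{k\ell}} = -\tfrac{1}{2}\sum_V \sum_{\substack{p: i \to j,\ q: k \to \ell \\ (p,q)\,\text{cross at}\,V}} q_V' p_V'' \otimes p_V' q_V''
\end{equation*}
where the inner sum is over pairs that split at a white internal vertex $V$ or merge at a black one.

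Next I would reorganize this sum by grouping terms according to the resulting crossed pair $(k \to j,\ i \to \ell)$: a single crossed pair generally arises from several choices of $(p,q,V)$. The contributions along each crossed path telescope, leaving a ``net'' coefficient that depends only on the boundary configuration of the four endpoints. Because $\Gamma$ is embedded in a disk with sources and sinks separated and ordered along the boundary, this coefficient depends only on the relative orders of $i$ versus $k$ and of $j$ versus $\ell$, which is precisely the combinatorial data encoded in $r_m$ and $r_n$. Reassembling the entries into matrix form then produces
\begin{equation*}
\db{B, B} = r_m (B_L \boxtimes B_R) - \bigl((B_L \boxtimes B_R) r_n \bigr)^\tau.
\end{equation*}

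The main technical obstacle is making the telescoping precise while carefully tracking signs and the correct cyclic orderings at each vertex. An alternative and perhaps cleaner route is an inductive argument: verify the formula by direct computation for elementary networks with a single internal white or black vertex, then extend to an arbitrary $\Gamma$ by decomposing it as $\Gamma_1 \# \Gamma_2 \# \cdots \# \Gamma_N$ and using the multiplicativity $B_{\Gamma} = B_{\Gamma_1} \cdots B_{\Gamma_N}$ from Proposition \ref{prop:bm_matrix_concatenation} together with the matrix Leibniz identities of Proposition \ref{prop:matrix_leibniz_identities}. In the inductive step one must show that the intermediate $r_{k_s}$-matrices arising from the shared wires between consecutive slices cancel; this cancellation relies on the skew-symmetry $r_k^\tau = -r_k$, the behavior $(AB)^\tau = A^\tau B^\tau$ of the transpose on $\mathrm{Mat}\boxtimes \mathrm{Mat}$, and careful control of the cross brackets $\db{B_{\Gamma_s}, B_{\Gamma_t}}$ at the gluing vertices (which, by Corollary \ref{cor:GluingCommute}, reduce to data localized at a single bivalent internal point).
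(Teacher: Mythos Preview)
Your alternative inductive route is exactly the paper's proof: verify the formula directly for the elementary pieces $\Gamma_\circ$ and $\Gamma_\bullet$, then propagate through concatenation via the matrix Leibniz identities. However, several of the mechanisms you invoke for the inductive step are off.

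First, the cross brackets $\db{X,Y}$ between the two factors $X=B_{\Gamma_1}$ and $Y=B_{\Gamma_2}$ vanish outright: the glued vertices are univalent boundary vertices of each piece, so there is no local contribution there at all. There is nothing to ``carefully control''; Corollary~\ref{cor:GluingCommute} is needed only to justify computing $\db{B,B}$ via the Leibniz rule applied to the factorization $B=XY$, not to analyze any residual bracket at a bivalent point. Second, the cancellation of the intermediate $r_k$ does \emph{not} use skew-symmetry $r_k^\tau=-r_k$ (indeed $\tau$ acts entrywise on $\mathcal C\otimes\mathcal C$ and fixes scalars, so $r_k^\tau=r_k$). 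Using only that $\tau$ is multiplicative and $(X_L\boxtimes X_R)^\tau=X_R\boxtimes X_L$, the Leibniz expansion of $\db{XY,XY}$ produces
\[
-(X_R\boxtimes X_L)\,r_k\,(Y_L\boxtimes Y_R)\;+\;(X_R\boxtimes X_L)\,r_k\,(Y_L\boxtimes Y_R),
\]
which cancels on the nose; no property of $r_k$ beyond being a scalar matrix enters. Third, you are missing a nontrivial ingredient: one must show that an arbitrary acyclic $\Gamma$ with separated sources and sinks can actually be assembled by successive boundary gluings of copies of $\Gamma_\circ$ and $\Gamma_\bullet$. This requires its own argument (the paper proves it by locating, among the internal vertices adjacent to the sink side, one that can be peeled off without destroying source/sink separation; acyclicity is used to rule out the bad case).

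Your first approach via the paths lemma is a genuinely different route and could be made to work, but as written it is only a sketch: the telescoping over internal vertices for a fixed crossed pair $(k\to j,\,i\to\ell)$ is asserted, not proved. If you pursue it, note that for source-to-sink paths in an acyclic network only white splits and black merges occur (a white vertex has a unique incoming edge, a black vertex a unique outgoing edge), so your uniform coefficient $-\tfrac12$ is correct; what remains is to show that the signed count of such transition vertices along a pair of paths depends only on the relative order of the boundary endpoints.
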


We will prove Theorem \ref{thm:planar_r_matrix_formula} in steps. First we will show that it holds for the elementary pieces consisting of
a single trivalent vertex, and then we will show that the $r$-matrix bracket is preserved under the fusion process.

First we consider the elementary networks $\Gamma_\circ$ and $\Gamma_\bullet$, which consist of a single white (resp. black) trivalent vertex.

\begin {lemma}
    The result of Theorem \ref{thm:planar_r_matrix_formula} holds for $\Gamma_\circ$ and $\Gamma_\bullet$.
\end {lemma}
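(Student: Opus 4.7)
The plan is to verify the identity directly for each of the two elementary pieces, exploiting the fact that each network has a single internal vertex and only three edges, so every bracket can be computed by hand.

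For $\Gamma_\circ$ (single white vertex, one source, two sinks) label the three incident edges $x_1, x_2, x_3$ as in Figure \ref{fig:ribbon_graph_pieces}, so that $x_3$ is the incoming boundary edge from the source and $x_1, x_2$ are the outgoing boundary edges to the two sinks. In the twisted convention adopted above, the only nonzero local bracket at the white vertex is $\db{x_1, x_2} = \tfrac{1}{2}(x_1 \otimes x_2)$, while $\db{x_i, x_3} = \db{x_i, x_i} = 0$. The boundary measurement matrix is a $1 \times 2$ row whose entries are the two compositions $x_3 x_1, x_3 x_2$, and I will compute the four brackets $\db{x_3 x_i, x_3 x_j}$ by iterating the double Leibniz identities (\ref{eq:DoubleLeibnizIdentityI})--(\ref{eq:DoubleLeibnizIdentityII}). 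Every term involving $\db{x_3, \cdot}$ drops out, so after two applications of Leibniz each surviving bracket reduces to a single monomial $\pm\tfrac{1}{2}(b_{1i} \otimes b_{1j})$. On the right-hand side of (\ref{eq:twisted_commutator_formula}), the factor $r_m = r_1$ vanishes, leaving $-\bigl((B_L \boxtimes B_R)\,r_2\bigr)^\tau$, which I expand using $r_2 = \tfrac{1}{2}(E_{21} \boxtimes E_{12} - E_{12} \boxtimes E_{21})$ and the explicit forms of $B_L, B_R$; a term-by-term comparison then matches $\db{B,B}$.

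The case $\Gamma_\bullet$ (single black vertex, two sources, one sink) is handled symmetrically: now $B$ is a $2 \times 1$ column, $r_n = r_1$ vanishes, so only the term $r_m\,(B_L \boxtimes B_R) = r_2\,(B_L \boxtimes B_R)$ on the right-hand side survives. The relevant local bracket is $\db{y_1, y_2} = -\tfrac{1}{2}(y_2 \otimes y_1)$, where the sign flip relative to the white case is precisely the effect of the half-twist used in constructing $\widehat{\Sigma}_\Gamma$. The same kind of Leibniz expansion reduces $\db{y_i y_3, y_j y_3}$ to a single $\pm\tfrac{1}{2}(b_{i1} \otimes b_{j1})$ term, and I compare with $r_2\,(B_L \boxtimes B_R)$.

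The only real obstacle I anticipate is bookkeeping: one must track carefully the clockwise/counter-clockwise ordering conventions for sources and sinks, the cyclic order of edges around each internal vertex, and the sign flip at black vertices, and verify that all of these conspire to reproduce exactly the signs packaged into $r_2$. It is precisely this compatibility between the local sign conventions and the antisymmetric structure of $r_n$ that explains why the white and black elementary pieces fit into the same $r$-matrix formula; once the conventions are consistently fixed, the identity reduces to a direct term-by-term match.
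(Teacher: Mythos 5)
Your proposal is correct and follows essentially the same route as the paper: a direct hand computation of the four brackets for each elementary piece via the double Leibniz identities (using that only $\db{x_1,x_2}$, resp.\ $\db{y_1,y_2}$, is nonzero in the simplified convention $w_{12}=\tfrac12$, $b_{12}=-\tfrac12$), followed by noting that $r_1=0$ kills one side of (\ref{eq:twisted_commutator_formula}) and matching the surviving $r_2$ term entry by entry. The paper does exactly this, writing out $B_{\Gamma_\circ}=(x_3x_2 \;\; x_3x_1)$ and $B_{\Gamma_\bullet}=(y_1y_3;\,y_2y_3)$ explicitly and comparing the resulting $4$-component tensors.
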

\begin {proof}
    Consider first $\Gamma_\circ$. It has one source and two sinks, and so $B_{\Gamma_\circ}$ is the $1$-by-$2$ matrix
    \[ B = \begin{pmatrix} x_3x_2 & x_3x_1 \end{pmatrix} \]
    The brackets are given by
    \begin {align*}
        \db{B,B} &= \frac{1}{2} \begin{pmatrix} 0 & -x_3x_2 \otimes x_3x_1 & x_3x_1 \otimes x_3x_2 & 0 \end{pmatrix} \\
                 &= \frac{1}{2} \begin{pmatrix} 0 & -b_{11} \otimes b_{12} & b_{12} \otimes b_{11} & 0 \end{pmatrix}
    \end {align*}
    In this case, $r_1 = (0)$ is the $1$-by-$1$ zero matrix, and
    \begin {align*}
        (B_L \boxtimes B_R) r_2 &= \frac{1}{2} \begin{pmatrix} b_{11} \otimes b_{11} & b_{11} \otimes b_{12} & b_{12} \otimes b_{11} & b_{12} \otimes b_{12} \end{pmatrix}
                                 \begin{pmatrix} 0 & 0 & 0 & 0 \\ 0 & 0 & -1 & 0 \\ 0 & 1 & 0 & 0 \\ 0 & 0 & 0 & 0 \end{pmatrix} \\
                              &= \frac{1}{2} \begin{pmatrix} 0 & b_{12} \otimes b_{11} & -b_{11} \otimes b_{12} & 0 \end{pmatrix}
    \end {align*}
    We see that $\db{B,B} = -((B_L \boxtimes B_R) r_2)^\tau$, and so the formula holds for $\Gamma_\circ$.

    Now we consider $\Gamma_\bullet$. There are two sources and one sink, so the boundary measurement matrix is the $2$-by-$1$ matrix
    \[ B = \begin{pmatrix} y_1y_3 \\[1.2ex] y_2y_3 \end{pmatrix} \]
    The bracket relations are
    \[ \db{B,B} \, = \frac{1}{2} \begin{pmatrix} 0 \\[1.2ex] -y_2y_3 \otimes y_1y_3 \\[1.2ex] y_1y_3 \otimes y_2y_3 \\[1.2ex] 0 \end{pmatrix}
                   = \frac{1}{2} \begin{pmatrix} 0 \\[1.2ex] -b_{21} \otimes b_{11} \\[1.2ex] b_{11} \otimes b_{21} \\[1.2ex] 0 \end{pmatrix}
    \]
    On the other hand,
    \[
        r_2 (B_L \boxtimes B_R) \, = \frac{1}{2} \begin{pmatrix} 0 & 0 & 0 & 0 \\ 0 & 0 & -1 & 0 \\ 0 & 1 & 0 & 0 \\ 0 & 0 & 0 & 0 \end{pmatrix}
                                 \begin{pmatrix} b_{11} \otimes b_{11} \\[1.2ex] b_{11} \otimes b_{21} \\[1.2ex] b_{21} \otimes b_{11} \\[1.2ex] b_{21} \otimes b_{21} \end{pmatrix}
        = \frac{1}{2} \begin{pmatrix} 0 \\[1.2ex] -b_{21} \otimes b_{11} \\[1.2ex] b_{11} \otimes b_{21} \\[1.2ex] 0 \end{pmatrix}
    \]
    This confirms that the formula holds for $\Gamma_\bullet$.
\end {proof}

\begin{lemma}\label{lem:gluing}
    Any trivalent network $\Gamma$ in a disk with non-interlacing univalent sources and sinks on the boundary
    and with no sources or sinks inside the disk and no oriented cycles can be built by consecutively gluing trivalent pieces
    $\Gamma_\circ$ and $\Gamma_\bullet$ to the boundary.
\end{lemma}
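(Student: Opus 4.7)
The plan is to induct on the number of internal vertices of $\Gamma$. In the base case $\Gamma$ has no internal vertices and therefore consists of a disjoint union of edges joining each source to a sink; this is the identity network from which the gluing process starts.

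For the inductive step, the key is to exploit acyclicity to select an internal vertex $v$ that is minimal in the partial order induced on the internal vertices by the directed edges, i.e. no other internal vertex has an edge into $v$. Since internal vertices are neither sources nor sinks, every in-neighbor of $v$ is a boundary vertex, and since boundary sinks have no outgoing edges, every in-neighbor of $v$ is in fact a boundary source. I then split into two cases according to the color of $v$. If $v$ is white, with one incoming edge from a source $s$ and two outgoing edges, I un-glue a copy of $\Gamma_\circ$ by retracting the edge $s\to v$ to the boundary and declaring the two tails of the outgoing edges of $v$ to be two new consecutive boundary sources in place of $s$. If $v$ is black, with incoming edges from sources $s_1$ and $s_2$ and one outgoing edge, I un-glue a copy of $\Gamma_\bullet$ by removing $v$ together with its three incident edges and the sources $s_1, s_2$, and introducing a single new source at the former position of $v$ from which the former outgoing edge of $v$ now emanates.

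The main obstacle is verifying that in the black case the sources $s_1$ and $s_2$ are consecutive along the source arc of the boundary, which is what makes the un-gluing planarly well-defined and keeps the sources and sinks non-interlacing. Suppose to the contrary that a third source $s_3$ lies between $s_1$ and $s_2$ on the boundary. Then the edges $s_1\to v$ and $s_2\to v$, together with the boundary arc from $s_1$ to $s_2$ containing $s_3$, bound a topological disk $\Omega$ inside the original disk, containing $s_3$ on its boundary. The unique outgoing edge of $s_3$ enters $\Omega$ and initiates a directed path which, by acyclicity and the absence of internal sinks, must terminate at some boundary sink. However, all boundary sinks lie on the opposite sink arc, which is disjoint from $\Omega$, so the path must exit $\Omega$; by planarity its only possible exit is through the vertex $v$, but all three edge-slots of $v$ are already occupied. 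This contradiction forces $s_1$ and $s_2$ to be consecutive.

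Once this is established, the remaining checks are routine: the un-gluing operation preserves trivalence of the internal vertices (no vertex other than $v$ has its degree modified), univalence of the boundary vertices, acyclicity (we only delete vertices and edges), and the non-interlacing condition. Applying the inductive hypothesis to the resulting smaller network $\Gamma'$ expresses it as a sequence of gluings of $\Gamma_\circ$'s and $\Gamma_\bullet$'s, and appending the un-glued piece as the final gluing step yields $\Gamma$.
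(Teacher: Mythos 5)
Your proof is correct, but it takes a genuinely different route from the paper's. The paper inducts from the sink side: it looks at internal vertices adjacent to a boundary sink, sorts them into three types, and must then dispose of the awkward Type~3 (a white vertex with one outgoing edge to a sink and the other going elsewhere, which cannot be cut off without violating the separation of sources and sinks); it does so by a global argument --- if every sink-adjacent vertex were of Type~3, deleting the sink-bound edges would leave a graph with no sinks at all, forcing an oriented cycle. You instead work from the source side and, crucially, invoke acyclicity at the outset to choose an internal vertex with no internal in-neighbours; since such a vertex's in-neighbours are univalent boundary vertices with outgoing edges, they are all sources, so the analogue of Type~3 never arises and the case analysis collapses to two clean cases. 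The price is the planarity verification that the two sources feeding a black vertex are adjacent on the boundary, which you settle with a path-trapping argument (a directed path starting strictly inside the region bounded by the two incoming edges cannot escape --- it cannot cross embedded edges and cannot enter $v$, whose only incoming edges emanate from univalent sources --- and hence can never reach a sink). This is a point the paper's proof actually glosses over (its Type~1 simply posits ``neighboring'' sinks), so your write-up is in this respect more careful. Two details you leave implicit, but which follow from the same trapping argument and should be recorded: the outgoing edge of the black vertex must point into the region containing the sinks (otherwise the path it starts is itself trapped), and the region to be excised must contain no internal vertices whatsoever, not merely no sources --- both are needed for the un-gluing to be a clean removal of a single $\Gamma_\bullet$.
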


\begin{proof}
We prove the Lemma by induction on the number of inner (trivalent) vertices of the network, starting with a network with one inner vertex.
If $\Gamma$ has only one inner vertex, then it has to be of type $\Gamma_\circ$ or $\Gamma_\bullet$ and the statement is proved.

Assume that $\Gamma$ has more than one inner vertex. Consider the set of sinks $I$ on the boundary.
Let $\widehat V$ be the subset of inner vertices which are connected to $I$ by one arc.
For $\hat v\in \widehat V$ we consider three possible cases.

\begin{enumerate}
\item[1] $\hat v$ is white, there is one incoming arrow $\alpha(\hat v)$ to $\hat v$ and two outgoing arrows
         $\beta(\hat v)$ and $\gamma(\hat v)$, both connecting $\hat v$ to two neighboring sinks $B$ and $C$
         (see Figure~\ref{fig:ThreeInnerChoices} Type 1). Cutting off the piece of the disk containing $B,C$, and $\hat v$
         along the dashed curve, we obtain a network with one less boundary sink and one less inner vertex,
         satisfying all conditions of the lemma and, thus, this case is proved by induction.
\item[2] $\hat v$ is a black vertex with two incoming arrows $\alpha(\hat v)$ and $\beta(\hat v)$ and one outgoing arrow $\gamma(\hat v)$
         connecting $\hat v$ to sink $C$ (see Figure~\ref{fig:ThreeInnerChoices} Type 2)). Again,
         cutting off the sink $C$ and vertex $\hat v$ along the dashed curve, we obtain a disk with one more sink and one less inner vertex and we are done by induction.
\item[3] $\hat v$ is a white vertex with one incoming arrow $\alpha(\hat v)$, one outgoing arrow $\beta(\hat v)$ connecting $\hat v$ with sink $B$,
         and another outgoing arrow $\gamma(\hat v)$ not connecting $\hat v$ to another sink (see Figure~\ref{fig:ThreeInnerChoices} Type 3).
         Note that cutting along the dotted curve might break the condition that sources and sinks are separated.
         Hence, such a simple inductive argument does not work in this case.
\end{enumerate}

We will prove now that the set $\widehat V$ contains at least one vertex of type 1 or 2.
Indeed, assume that $\widehat V$ contains only vertices of type 3. We will show that then
$\Gamma$ must have an oriented cycle. Indeed, remove from $\Gamma$ each arc $\beta(\hat v)$ for all $\hat v\in \widehat V$.
The result is an oriented graph in the disk with no sinks, and only sources on the boundary.
Such graph must have an oriented cycle because any oriented path can be extended (any vertex has an outgoing arrow).
Then, for any $\Gamma$ satisfying the lemma's condition, we can always apply induction step of type 1 or 2.

\begin{figure}
\begin{subfigure}{0.3\linewidth}
\centering
\begin {tikzpicture} [scale=1.5]
    \draw [ultra thick] (0,0) circle (1.0);


    \draw [fill=white] ($0.37*(-0.5,-0.866)$) circle (0.05);

    \draw [-latex] (-0.6,0) to [out=0, in=130] ($0.37*(-0.5,-0.866) + 0.04*(-1,1)$);
    \draw [-latex] ($0.32*(-0.5,-0.866) + 0.05*(0,-1)$) -- (0,-1);
    \draw [-latex] ($0.37*(-0.5,-0.866) + 0.05*(0,-1)$) to [out=20, in=120] (0.5,-0.866);

    \draw (0.65,-1.016) node {$B$};
    \draw (0,-1.126)  node {$C$};

    \draw (-0.3,-0.4) node {$\hat v$};

    \draw (0.2,-0.35) node {\small $\beta$};
    \draw (-0.,-0.7) node {\small $\gamma$};
    \draw (-0.35,-0.05)    node {\small $\alpha$};
 \draw       [dashed]   (-0.08,-1) to [out=120, in=180] (-0.35,0.05) to  [out=0, in=60]  (0.65,-0.8);

\end {tikzpicture}
\begin{align*}
\text{Type 1.}
\end{align*}

\end{subfigure}
\begin{subfigure}{0.3\linewidth}
\centering
\begin {tikzpicture} [scale=1.5]
    \draw [ultra thick] (0,0) circle (1.0);


    \draw [fill=black] ($0.37*(-0.5,-0.866)$) circle (0.05);

    \draw [-latex] (-0.6,0) to [out=0, in=130] ($0.37*(-0.5,-0.866) + 0.04*(-1,1)$);
    \draw [-latex] ($0.32*(-0.5,-0.866) + 0.05*(0,-1)$) -- (0,-1);
    \draw [-latex] ((0.,0.2) to  [out=240, in=90]  ($0.37*(-0.5,-0.866) + 0.05*(0,1)$);

    \draw (0,-1.126)  node {$C$};

    \draw (-0.3,-0.4) node {$\hat v$};

    \draw (0.04,-0.05) node {\small $\beta$};
    \draw (-0.,-0.7) node {\small $\gamma$};
    \draw (-0.35,-0.05)    node {\small $\alpha$};
 \draw       [dashed]   (-0.08,-1) to [out=120, in=180] (-0.35,0.05) to  [out=0, in=60]  (0.08,-1);

\end {tikzpicture}
\begin{align*}
\text{Type 2.}
\end{align*}

\end{subfigure}
\begin{subfigure}{0.3\linewidth}
\centering
\begin {tikzpicture} [scale=1.5]
    \draw [ultra thick] (0,0) circle (1.0);


    \draw [fill=white] ($0.37*(-0.5,-0.866)$) circle (0.05);

    \draw [-latex] (-0.6,0) to [out=0, in=130] ($0.37*(-0.5,-0.866) + 0.04*(-1,1)$);
    \draw [-latex] ($0.32*(-0.5,-0.866) + 0.05*(0,-1)$) -- (0,-1);
    \draw [-latex] ($0.37*(-0.5,-0.866) + 0.05*(0,1)$) to [out=80, in=240] (0.,0.2);

    \draw (0,-1.126)  node {$B$};

    \draw (-0.3,-0.4) node {$\hat v$};

    \draw (0.04,-0.05) node {\small $\gamma$};
    \draw (-0.,-0.7) node {\small $\beta$};
    \draw (-0.35,-0.05)    node {\small $\alpha$};
 \draw       [dotted]   (-0.08,-1) to [out=120, in=180] (-0.35,0.05) to  [out=0, in=60]  (0.08,-1);

\end {tikzpicture}
\begin{align*}{l}
\text{Type 3.} \\
\end{align*}

\end{subfigure}
\caption{Three types of an inner vertex connected to a sink.  Note that the figure  of type 3 shows only one possible cyclic order of $\alpha,\beta,\gamma$. One can also exchange order of $\alpha$ and $\gamma$.
}
\label{fig:ThreeInnerChoices}
\end{figure}
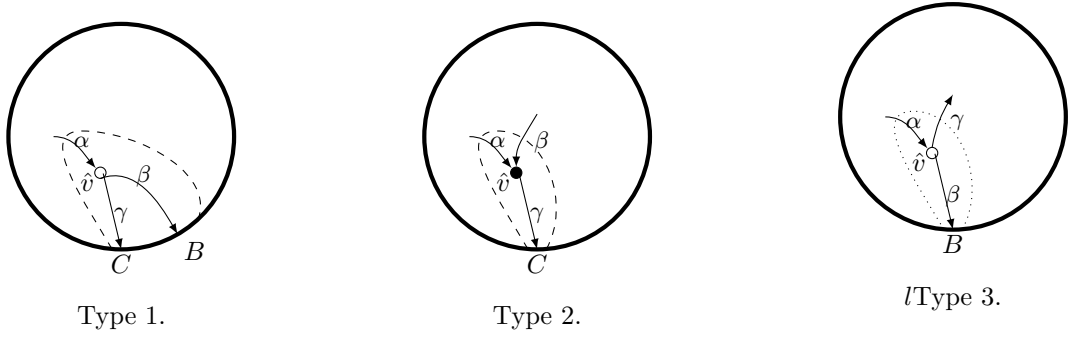

\end{proof}

Because any network $\Gamma$ that we consider can be built from these trivalent pieces $\Gamma_\circ$ and $\Gamma_\bullet$ by gluing, Theorem \ref{thm:planar_r_matrix_formula}
follows by induction from the next lemma.

\begin {lemma} \label{lem:bracket_induction}
    Suppose that $\Gamma_1$ and $\Gamma_2$ are perfect planar networks with boundary measurement matrices $X = B_{\Gamma_1}$ and $Y = B_{\Gamma_2}$,
    of sizes $m$-by-$k$ and $k$-by-$n$ respectively.
    Let $\Gamma$ be the network obtained by glueing the sinks of $\Gamma_1$ to the sources of $\Gamma_2$, with boundary measurement matrix $B = XY$.
    If the statement of Theorem \ref{thm:planar_r_matrix_formula} holds for both $\Gamma_1$ and $\Gamma_2$, then it holds for $\Gamma$ as well.
\end {lemma}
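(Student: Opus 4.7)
The plan is to expand $\db{B, B} = \db{XY, XY}$ using the matrix Leibniz identities of Proposition \ref{prop:matrix_leibniz_identities}, show that the resulting cross brackets $\db{X, Y}$ and $\db{Y, X}$ vanish, and then substitute the inductive hypotheses for $\Gamma_1$ and $\Gamma_2$ so that the overlapping $r_k$-contributions cancel. Applying Proposition \ref{prop:matrix_leibniz_identities} in both slots produces
\begin{align*}
\db{XY, XY} ={}& \db{X, X}(Y_L \boxtimes Y_R) + (\mathbf 1 \boxtimes X_L)\db{X, Y}(Y_L \boxtimes \mathbf 1) \\
& + (X_R \boxtimes \mathbf 1)\db{Y, X}(\mathbf 1 \boxtimes Y_R) + (X_R \boxtimes X_L)\db{Y, Y},
\end{align*}
after absorbing $(\mathbf 1 \boxtimes Y_R)(Y_L \boxtimes \mathbf 1) = Y_L \boxtimes Y_R$ and the analogous identity for $X$.

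The central step is the vanishing of the cross brackets. Corollary \ref{cor:GluingCommute}, applied iteratively to the $k$ matched sink--source pairs, reduces any double bracket of boundary-to-boundary morphisms on $\widehat\Sigma_\Gamma$ to the same bracket computed on the disjoint union $\widehat\Sigma_{\Gamma_1} \sqcup \widehat\Sigma_{\Gamma_2}$, where the bivector is $P_{\widehat\Sigma_{\Gamma_1}} + P_{\widehat\Sigma_{\Gamma_2}}$. Since each summand involves only derivations with respect to edges of its own subgraph, the bracket between a $\Gamma_1$-edge and a $\Gamma_2$-edge is zero, and the double Leibniz identity propagates this vanishing to arbitrary matrix entries. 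Hence $\db{X, Y} = 0$, and by skew-symmetry $\db{Y, X} = 0$.

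With the cross terms gone, the expansion reduces to $\db{B, B} = \db{X, X}(Y_L \boxtimes Y_R) + (X_R \boxtimes X_L)\db{Y, Y}$. Substituting the inductive formulas and using the entrywise identities $(M_L \boxtimes M_R)^\tau = M_R \boxtimes M_L$ and $r_k^\tau = r_k$ (the latter because $r_k$ has scalar entries), one checks
\[ \bigl((X_L \boxtimes X_R) r_k\bigr)^\tau (Y_L \boxtimes Y_R) = (X_R \boxtimes X_L) r_k (Y_L \boxtimes Y_R), \]
so the two middle $r_k$-contributions cancel. The surviving outer terms collapse via $(X_L \boxtimes X_R)(Y_L \boxtimes Y_R) = B_L \boxtimes B_R$ and $(X_R \boxtimes X_L)(Y_R \boxtimes Y_L) = (B_L \boxtimes B_R)^\tau$ into the required formula $r_m(B_L \boxtimes B_R) - \bigl((B_L \boxtimes B_R) r_n\bigr)^\tau$. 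The only delicate point is the vanishing of $\db{X, Y}$ on the glued surface, which is precisely the content of Corollary \ref{cor:GluingCommute}; everything else is a direct $\tau$-calculation.
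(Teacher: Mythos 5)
Your proposal is correct and follows essentially the same route as the paper: expand $\db{XY,XY}$ via Proposition \ref{prop:matrix_leibniz_identities}, kill the cross terms $\db{X,Y}$ using Corollary \ref{cor:GluingCommute} (the paper phrases this as the local, vertex-wise definition of the bracket together with univalence of boundary vertices, which amounts to the same decomposition $P_{\Sigma_1}+P_{\Sigma_2}$), and then cancel the two $r_k$-contributions and collapse the outer terms using $(A_L\boxtimes B_R)(C_L\boxtimes D_R)=A_LC_L\boxtimes B_RD_R$. Your $\tau$-bookkeeping, including $((X_L\boxtimes X_R)r_k)^\tau=(X_R\boxtimes X_L)r_k$, matches the identities the paper uses in its displayed computation.
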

\begin {proof}
    First note that because boundary vertices are univalent, it follows from the local definition of the bracket that $\db{X,Y} = 0$.
    Then since $B = XY$, we have by Proposition \ref{prop:matrix_leibniz_identities} combined with Corollary \ref{cor:GluingCommute} that
    \[ \db{B,B} \, = \db{XY, \, XY} = \db{X,X} (Y_L \boxtimes\mathbf 1)(\mathbf1 \boxtimes Y_R) + (\mathbf 1 \boxtimes X_L)(X_R \boxtimes\mathbf 1) \db{Y,Y} \]
    Using the assumption that the $r$-matrix formula holds for $X$ and $Y$, we get
    \begin{equation}
    \begin{aligned}
        \db{B,B} &= r_m (X_L \boxtimes X_R)(Y_L \boxtimes\mathbf 1)(\mathbf 1 \boxtimes Y_R) - (X_R \boxtimes X_L) r_k (Y_L \boxtimes\mathbf 1)(\mathbf 1 \boxtimes Y_R) \\
        &\phantom{=} + (\mathbf 1 \boxtimes X_L)(X_R \boxtimes\mathbf 1) r_k (Y_L \boxtimes Y_R) - (\mathbf 1 \boxtimes X_L)(X_R \boxtimes\mathbf 1)(Y_R \boxtimes Y_L) r_n
    \end{aligned}
    \label{eq:BracketBB}
    \end{equation}
    Although for general matrices $A,B,C,D$ with noncommuting entries $(A \boxtimes B)(C \boxtimes D) \neq AC \boxtimes BD$,
    it is true however, that $(Y_L \boxtimes\mathbf 1)(\mathbf 1 \boxtimes Y_R) = (Y_L \boxtimes Y_R)$, and similarly that $(\mathbf 1 \boxtimes X_R)(X_L \boxtimes\mathbf 1) = X_L \boxtimes X_R$.
    So the middle two terms in the equation above cancel. Also, it is true in general that $(A_L \boxtimes B_R)(C_L \boxtimes D_R) = A_LC_L \boxtimes B_RD_R$.
    So (\ref{eq:BracketBB}) simplifies to
    \begin {align*}
        \db{B,B} &= r_m (X_LY_L \boxtimes X_RY_R) - (X_RY_R \boxtimes X_LY_L) r_n \\
                 &= r_m (B_L \boxtimes B_R) - \Big( (B_L \boxtimes B_R) r_n \Big)^\tau
    \end {align*}
\end {proof}

\subsection{$r$-Matrix Bracket for Cylindrical Networks}

In this subsection we prove that the double brackets for cylindrical networks can be presented in an analogous way to Theorem \ref{thm:planar_r_matrix_formula}.
In this case, we use (for each $n$) the trigonometric $r$-matrix for $\mathrm{GL}(n)$:
\begin{equation}
r_n(\lambda,\mu) = \frac{1}{2} \, \frac{\mu+\lambda}{\mu-\lambda} \sum_{m=1}^n E_{mm} \boxtimes E_{mm} + \frac{1}{\mu-\lambda} \sum_{1\leqslant i < j\leqslant n}\left(\lambda E_{ij} \boxtimes E_{ji} + \mu E_{ji}\boxtimes E_{ij}\right).
\label{eq:TrigonometricRMatrix}
\end{equation}
Here the entries of $r_n(\lambda,\mu)$ are elements of $\mathbf k = \mathbb C(\lambda,\mu)$, the ground field, which for this subsection is chosen to
be the ring of rational functions in two variables $\lambda$ and $\mu$ commonly referred to as \textit{spectral parameters}.

The analogue of Theorem \ref{thm:planar_r_matrix_formula} for cylindrical networks is the following.

\begin {theorem} \label{thm:cylindrical_r_matrix_formula}
    Let $\Gamma$ be a perfect cylindrical network with $m$ sources and $n$ sinks. Then
    \begin{subequations}
    \begin{equation}
    \db{B(\lambda),B(\mu)} \, = r_m(\lambda,\mu) (B_L(\lambda) \boxtimes B_R(\mu)) - \Big( (B_L(\lambda) \boxtimes B_R(\mu) \, r_n(\lambda,\mu) \Big)^\tau
    \label{eq:TwistedCommutatorSpectralParameterGeneral}
    \end{equation}
    In particular, if $B(\lambda)$ is a square matrix, then this can be written as a twisted commutator:
    \begin{equation}
    \db{B(\lambda),B(\mu)} \, = \left[ r(\lambda,\mu), \, B_L(\lambda) \boxtimes B_R(\mu) \right]_\tau
    \label{eq:TwistedCommutatorSpectralParameterSquare}
    \end{equation}
    \end{subequations}
\end {theorem}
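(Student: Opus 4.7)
The plan is to reduce the theorem to its planar counterpart (Theorem \ref{thm:planar_r_matrix_formula}) by cutting the cylinder along the chosen cut curve and performing an $r$-matrix dressing transformation. Suppose the cut transversally intersects $N$ edges of $\Gamma$. Cutting along it produces a planar network $\widetilde\Gamma$ in a rectangle whose boundary carries the original $m$ sources and $n$ sinks together with $N$ newly created ``cut-sources'' and $N$ ``cut-sinks'', paired by the cut-crossing edges. Ordering the boundary so that original vertices come first, the planar boundary measurement matrix of $\widetilde\Gamma$ acquires the block form
$$\widetilde B = \begin{pmatrix} A & B \\ C & D \end{pmatrix},$$
with $A$ of size $m\times n$ and $D$ of size $N\times N$. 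By (the self-gluing analogue of) Corollary \ref{cor:GluingCommute} the double bracket on $\widetilde\Gamma$ agrees with the one induced from $\Gamma$, and Theorem \ref{thm:planar_r_matrix_formula} provides $\db{\widetilde B,\widetilde B}$ in terms of the constant $r$-matrices $r_{m+N}$ and $r_{n+N}$. Splitting these $r$-matrices according to the partition into original versus cut indices yields explicit expressions for the block brackets $\db{A,A},\db{A,B},\dots,\db{D,D}$.

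A path on the cylinder from a source to a sink with $k$ cut crossings decomposes canonically into a concatenation of $k+1$ planar segments matched at the cut pairs, each crossing contributing one factor of $\lambda$ by the definition of $B(\lambda)$. Summing the resulting geometric series produces the dressing formula
$$B(\lambda) = A + \lambda B(\mathbf 1-\lambda D)^{-1}C = A + \sum_{k\geq 0}\lambda^{k+1}\,BD^kC.$$
Using Proposition \ref{prop:matrix_leibniz_identities} together with Corollary \ref{cor:MatrixDoubleLeibnizPowers} to expand powers of $D$, one writes $\db{B(\lambda),B(\mu)}$ as a formal double series in $\lambda,\mu$ whose coefficients involve only the block brackets from the previous step. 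The diagonal blocks $r_m\hookrightarrow r_{m+N}$ and $r_n\hookrightarrow r_{n+N}$ already contribute the constant (in $\lambda,\mu$) part of the right-hand side of (\ref{eq:TwistedCommutatorSpectralParameterGeneral}) after sandwiching by $B_L(\lambda)\boxtimes B_R(\mu)$. The off-diagonal blocks of $r_{m+N}$ and $r_{n+N}$ (those mixing original and cut indices) combine with the geometric series in $\lambda D$ and $\mu D$ to produce sums of shape $\sum_{k,\ell\geq 0}\lambda^k\mu^\ell\,(\cdots)\,D^k\boxtimes D^\ell$ which must then be resummed to rational functions of $\lambda,\mu$.

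The main obstacle is verifying that these resummations collapse into precisely the rational factors $\tfrac{\lambda+\mu}{2(\mu-\lambda)}$, $\tfrac{\lambda}{\mu-\lambda}$ and $\tfrac{\mu}{\mu-\lambda}$ that distinguish the trigonometric $r$-matrix (\ref{eq:TrigonometricRMatrix}) from a constant one. After careful bookkeeping of which tensor slot receives $\lambda$ versus $\mu$ (dictated by the $L/R$ placement in Proposition \ref{prop:matrix_leibniz_identities}), this is reduced to elementary geometric-series identities of the form $\sum_{k\geq 0}\lambda^k=\tfrac1{1-\lambda}$ and rearrangements like $\sum_{k<\ell}(\lambda^k\mu^\ell-\lambda^\ell\mu^k)$. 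Useful sanity checks are the limit $N=0$, where $B(\lambda)=A$ and one recovers Theorem \ref{thm:planar_r_matrix_formula} directly, and the trivial cylinder consisting of a single closed loop, where both sides of (\ref{eq:TwistedCommutatorSpectralParameterGeneral}) collapse to the same scalar. Once the resummation is matched, the right-hand side rearranges exactly into $r_m(\lambda,\mu)(B_L(\lambda)\boxtimes B_R(\mu))-((B_L(\lambda)\boxtimes B_R(\mu))r_n(\lambda,\mu))^\tau$, yielding (\ref{eq:TwistedCommutatorSpectralParameterGeneral}); the twisted-commutator form (\ref{eq:TwistedCommutatorSpectralParameterSquare}) is the specialization $m=n$.
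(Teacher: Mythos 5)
Your strategy — cut the cylinder open, apply the planar theorem to the enlarged boundary measurement matrix, and resum a geometric series in the cut-crossing block $D$ to recover the trigonometric $r$-matrix — is not the paper's route, and as written it has a gap that blocks the very first step. Theorem \ref{thm:planar_r_matrix_formula} is stated (and proved) only for planar networks whose sources and sinks are \emph{separated} on the boundary: the $r$-matrices $r_m$ and $r_n$ are built from a single linear order on a consecutive block of sources and a consecutive block of sinks. After cutting the cylinder along the cut curve, the rectangle you obtain has the original sources on the left edge, the original sinks on the right edge, and the two copies of the cut on the top and bottom edges carrying the new cut-sources and cut-sinks. Reading around the boundary, sources and sinks alternate in two arcs each, so $\widetilde\Gamma$ does \emph{not} satisfy the separation hypothesis, and there is no statement in the paper giving $\db{\widetilde B,\widetilde B}$ in terms of $r_{m+N}$ and $r_{n+N}$ for such a configuration. (The authors explicitly list networks with interlaced inputs and outputs as an open direction.) A secondary issue is that the exponent $d$ in $b_{ij}(\lambda)$ is an \emph{oriented} intersection index, so edges may cross the cut with either sign; your dressing formula $B(\lambda)=A+\lambda B(\mathbf 1-\lambda D)^{-1}C$ assumes every crossing contributes $\lambda^{+1}$.

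Beyond that, the step you yourself flag as ``the main obstacle'' — resumming $\sum_{k,\ell}\lambda^k\mu^\ell(\cdots)D^k\boxtimes D^\ell$ into the coefficients $\tfrac{\lambda+\mu}{2(\mu-\lambda)}$, $\tfrac{\lambda}{\mu-\lambda}$, $\tfrac{\mu}{\mu-\lambda}$ — is the entire content of the theorem and is not carried out; it is not evident that series in $\lambda$ and $\mu$ of the shape you describe produce denominators $\mu-\lambda$ at all. The paper avoids both difficulties by running the same induction as in the planar case: it first verifies the formula directly for the elementary one-vertex networks $\Gamma_\circ$ and $\Gamma_\bullet$, now including the configurations where one of the three edges crosses the cut (this is precisely where the explicit $\lambda$, $\mu$ dependence and the trigonometric $r$-matrix enter, checked entry by entry), and then reuses Lemma \ref{lem:gluing} and the argument of Lemma \ref{lem:bracket_induction} verbatim, since the identities $(Y_L\boxtimes\mathbf 1)(\mathbf 1\boxtimes Y_R)=Y_L\boxtimes Y_R$ and $(A_L\boxtimes B_R)(C_L\boxtimes D_R)=A_LC_L\boxtimes B_RD_R$ hold with spectral parameters unchanged. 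If you want to salvage your approach, you would first have to prove an $r$-matrix formula for planar networks with one prescribed interlacing pattern of sources and sinks, and then actually perform the resummation; both are substantial and neither is supplied.
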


Again we prove this in two steps by first proving the result for elementary building blocks, and then showing that the twisted commutator
expression above is preserved under gluing. Recall that $\Gamma_\circ$ and $\Gamma_\bullet$ are the elementary networks consisting of a single
trivalent vertex, as depicted in Figure \ref{fig:ribbon_graph_pieces}.

\begin {lemma}
    The statement of Theorem \ref{thm:cylindrical_r_matrix_formula} holds for the elementary networks $\Gamma_\circ$ and $\Gamma_\bullet$.
\end {lemma}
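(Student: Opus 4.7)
The plan is to verify the formula of Theorem \ref{thm:cylindrical_r_matrix_formula} for the elementary pieces $\Gamma_\circ$ and $\Gamma_\bullet$ by reducing directly to the analogous planar lemma already proved.

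First I would observe that for a single-vertex elementary piece the cut of the cylinder may be chosen so that it avoids every edge of the network. Consequently the boundary measurement matrix $B(\lambda)$ coincides with the planar boundary measurement matrix and carries no dependence on the spectral parameters. The left-hand side $\db{B(\lambda),B(\mu)}$ therefore equals the planar $\db{B,B}$, which was already matched in the planar lemma with the constant $r$-matrix expression $r_m\,(B_L \boxtimes B_R) - ((B_L \boxtimes B_R) r_n)^\tau$ coming from (\ref{eq:RMatrixDisc}).

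Next I would compare the trigonometric $r$-matrix (\ref{eq:TrigonometricRMatrix}) with its constant counterpart (\ref{eq:RMatrixDisc}). A direct collection of the coefficients of $E_{mm}\boxtimes E_{mm}$, $E_{ij}\boxtimes E_{ji}$, and $E_{ji}\boxtimes E_{ij}$ yields
\begin{align*}
r_n(\lambda,\mu) - r_n \;=\; \frac{\mu+\lambda}{2(\mu-\lambda)}\, P_n, \qquad P_n \;=\; \sum_{i,j=1}^n E_{ij} \boxtimes E_{ji},
\end{align*}
so the only extra contribution in the trigonometric case is a scalar multiple of the permutation operator. The key step is then the purely combinatorial identity
\begin{align*}
P_m\,(B_L \boxtimes B_R) \;=\; \bigl( (B_L \boxtimes B_R)\, P_n \bigr)^\tau,
\end{align*}
which I would verify entrywise: using $(E_{ab} \boxtimes E_{cd})_{(i,k),(j,l)} = \delta_{ai}\delta_{bj}\delta_{ck}\delta_{dl}$, both the $(i,k),(j,l)$-entry of $P_m\,(B_L \boxtimes B_R)$ and that of $((B_L \boxtimes B_R)P_n)^\tau$ evaluate to $b_{kj} \otimes b_{il}$.

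Since the scalar $\tfrac{\mu+\lambda}{2(\mu-\lambda)}$ is the same whether it multiplies $P_m$ on the left or $P_n$ on the right, this permutation identity forces the ``extra'' trigonometric contributions on the two sides of (\ref{eq:TwistedCommutatorSpectralParameterGeneral}) to cancel. The trigonometric right-hand side therefore coincides with the constant right-hand side, and the equality with $\db{B(\lambda),B(\mu)}$ follows from the planar lemma applied to $\Gamma_\circ$ and $\Gamma_\bullet$. The only delicate point is keeping the $\boxtimes$ product, the position-indexing of $E_{ab} \boxtimes E_{cd}$, and the entrywise action of $\tau$ properly aligned; once the permutation identity is in place, the remainder of the argument is mechanical and uniform in the two elementary shapes.
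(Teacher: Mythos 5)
Your reduction works only in the special case where the elementary piece does not meet the cut, and that case does not suffice. The cut is fixed once and for all on the cylinder containing the full network $\Gamma$; when $\Gamma$ is decomposed into elementary pieces for the inductive gluing argument, each piece inherits whatever portion of the cut passes through it, so you are not free to re-choose the cut separately for each $\Gamma_\circ$ or $\Gamma_\bullet$. Since a genuinely cylindrical network has paths winding around the annulus, some elementary pieces necessarily have edges crossing the cut; for those pieces the boundary measurement matrix really depends on $\lambda$ (entries acquire factors of $\lambda$ and the rows/columns are permuted, because the source/sink numbering is determined only after cutting). The paper's proof devotes its second half precisely to this case, e.g.\ $B(\lambda)=\begin{pmatrix}\lambda\,y_2y_3\\ y_1y_3\end{pmatrix}$ for $\Gamma_\bullet$ with $y_2$ crossing the cut, and verifies the four entries of the claimed identity by hand.

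Moreover, the gap is not repairable by the same cancellation: your key identity $P_m(B_L\boxtimes B_R)=\bigl((B_L\boxtimes B_R)P_n\bigr)^\tau$ is correct, but with distinct spectral parameters the two sides become
\begin{align*}
\bigl(P_m\,(B_L(\lambda)\boxtimes B_R(\mu))\bigr)_{(i,k),(j,l)}=b_{kj}(\lambda)\otimes b_{il}(\mu),
\qquad
\Bigl(\bigl(B_L(\lambda)\boxtimes B_R(\mu)\bigr)P_n\Bigr)^\tau_{(i,k),(j,l)}=b_{kj}(\mu)\otimes b_{il}(\lambda),
\end{align*}
which agree only when the entries are independent of the spectral parameter. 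So the extra term $\tfrac{\mu+\lambda}{2(\mu-\lambda)}P$ no longer cancels between the two sides of \eqref{eq:TwistedCommutatorSpectralParameterGeneral} once an edge crosses the cut, and the crossing cases must be checked directly, as the paper does. That said, your treatment of the non-crossing case — identifying $r_n(\lambda,\mu)-r_n=\tfrac{\mu+\lambda}{2(\mu-\lambda)}P_n$ and invoking the permutation identity — is a clean conceptual replacement for the paper's explicit $4\times4$ matrix computation in that case, and is worth keeping as the first half of a complete proof.
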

\begin {proof}
    The proof will be almost the same as in the planar case, but we will also need to consider the cases that one of the three edges crosses the cut.
    Note that Lemma ~\ref{lem:gluing}  holds by the same arguments for  any oriented trivalent network in a cylinder without oriented cycles and with no sources or sinks inside.

    First let us consider $\Gamma_\circ$, and assume that the network does not cross the cut. Then as in the planar case, the boundary measurement matrix is:
    \[ B(\lambda) = \begin{pmatrix} x_3x_2 & x_3x_1 \end{pmatrix}, \]
    and the brackets are given by
    \begin {align*}
        \db{B(\lambda),B(\mu)} &= \frac{1}{2} \begin{pmatrix} 0 & -x_3x_2 \otimes x_3x_1 & x_3x_1 \otimes x_3x_2 & 0 \end{pmatrix} \\
                 &= \frac{1}{2} \begin{pmatrix} 0 & -b_{11} \otimes b_{12} & b_{12} \otimes b_{11} & 0 \end{pmatrix}
    \end {align*}
    In the cylindrical case, $r_1(\lambda,\mu)$ is the $1$-by-$1$ matrix with coefficient $\frac{\mu+\lambda}{2(\mu-\lambda)}$. So we have
    \[ r_1(\lambda,\mu) (B_L(\lambda) \boxtimes B_R(\mu)) = \frac{\mu+\lambda}{2(\mu-\lambda)}
       \begin {pmatrix} b_{11} \otimes b_{11} & b_{11} \otimes b_{12} & b_{12} \otimes b_{11} & b_{12} \otimes b_{12} \end {pmatrix}
    \]
    On the other hand, we have
    \begin {align*}
        (B_L(\lambda) \boxtimes B_R(\mu)) r_2(\lambda,\mu) &= \frac{1}{2(\mu-\lambda)} \begin {pmatrix} b_{11} \otimes b_{11} & b_{11} \otimes b_{12} & b_{12} \otimes b_{11} & b_{12} \otimes b_{12} \end {pmatrix}
        \begin {pmatrix} \mu+\lambda & 0 & 0 & 0 \\ 0 & 0 & 2\lambda & 0 \\ 0 & 2\mu & 0 & 0 \\ 0 & 0 & 0 & \mu+\lambda \end {pmatrix} \\
        &= \frac{1}{2(\mu-\lambda)} \begin {pmatrix} (\mu+\lambda)(b_{11} \otimes b_{11}) & 2\mu (b_{12} \otimes b_{11}) & 2\lambda (b_{11} \otimes b_{12}) & (\mu+\lambda)(b_{12} \otimes b_{12}) \end {pmatrix}
    \end {align*}
    After applying $\tau$ and subtracting from the equation above, we see that the formula holds for $\Gamma_\circ$. Note that although $\lambda$ and $\mu$ appear in each term,
    after subtracting all nonzero coefficients become $\frac{\lambda-\mu}{\mu-\lambda}$ or $\frac{\mu-\lambda}{\mu-\lambda}$, and it agrees with the planar case.

    The calculation for $\Gamma_\bullet$ is similar, and we leave it to the reader.

    Now we must consider the cases where the elementary pieces cross the cut. In this case, the boundary measurement matrices will explicitly depend on the parameter $\lambda$.
    Consider first $\Gamma_\bullet$, and suppose that the edge labelled $y_2$ crosses the cut. If the cut is oriented from left-to-right, then $y_2$ gets a factor of $\lambda$.
    The boundary measurement matrix in this case is
    \[ B(\lambda) = \begin{pmatrix} \lambda \, y_2y_3 \\[1.5ex] y_1y_3 \end {pmatrix} \]
    Note that the rows are permuted compared to the planar version of $\Gamma_\bullet$ because one of the edges crosses the cut.
    The bracket relations give us
    \[ \db{B(\lambda), B(\mu)} \, = \frac{1}{2} \begin{pmatrix} 0 \\[1.5ex] \lambda \, y_1y_3 \otimes y_2y_3 \\[1.5ex] -\mu \, y_2y_3 \otimes y_1y_3 \\[1.5ex] 0 \end{pmatrix}
       = \frac{1}{2} \begin{pmatrix} 0 \\[1.5ex] b_{21}(\mu) \otimes b_{11}(\lambda) \\[1.5ex] -b_{11}(\mu) \otimes b_{21}(\lambda) \\[1.5ex] 0 \end{pmatrix}
    \]
    The claim is that this is equal to
    \[
        r_2(\lambda,\mu) (B_L(\lambda) \boxtimes B_R(\mu)) - \Big( (B_L(\lambda) \boxtimes B_R(\mu)) r_1(\lambda,\mu) \Big)^\tau
        = \frac{1}{2(\mu-\lambda)} \begin{pmatrix} (\mu+\lambda) \left( b_{11}(\lambda) \otimes b_{11}(\mu) - b_{11}(\mu) \otimes b_{11}(\lambda) \right) \\[1.5ex]
                                                   2\lambda (b_{21}(\lambda) \otimes b_{11}(\mu)) - (\mu+\lambda)(b_{21}(\mu) \otimes b_{11}(\lambda)) \\[1.5ex]
                                                   2\mu     (b_{11}(\lambda) \otimes b_{21}(\mu)) - (\mu+\lambda)(b_{11}(\mu) \otimes b_{21}(\lambda)) \\[1.5ex]
                                                   (\mu+\lambda) \left( b_{21}(\lambda) \otimes b_{21}(\mu) - b_{21}(\mu) \otimes b_{21}(\lambda) \right)
                                    \end {pmatrix}
    \]
    There are four equations to check to verify this. The first condition is that $b_{11}(\lambda) \otimes b_{11}(\mu) = b_{11}(\mu) \otimes b_{11}(\lambda)$.
    This is true, since both are equal to $\lambda \mu \, y_2y_3 \otimes y_2y_3$. Likewise, the fourth condition, that
    $b_{21}(\lambda) \otimes b_{21}(\mu) = b_{21}(\mu) \otimes b_{21}(\lambda)$, is also true, since both are equal to $y_1y_3 \otimes y_1y_3$.
    The second condition is that
    \[ \frac{\lambda}{\mu-\lambda} (b_{21}(\lambda) \otimes b_{11}(\mu)) - \frac{\mu+\lambda}{2(\mu-\lambda)}(b_{21}(\mu) \otimes b_{11}(\lambda)) = \frac{1}{2} (b_{21}(\mu) \otimes b_{11}(\lambda)) \]
    The left-hand side of this equation reduces to
    \[ \frac{1}{2} \, \frac{1}{\mu-\lambda} \left( 2 \lambda \mu - (\mu+\lambda) \lambda \right) (y_1y_3 \otimes y_2y_3)
       = \frac{\lambda}{2} (y_1y_3 \otimes y_2y_3)
       = \frac{1}{2} (b_{21}(\mu) \otimes b_{11}(\lambda))
    \]
    The cases for the other ways in which the edges can cross the cut, and the remaining cases for $\Gamma_\circ$, are all similar.
\end {proof}

Now that we have verified the formula for elementary pieces, we will prove a lemma analogous to Lemme \ref{lem:bracket_induction} for cylindrical networks.
This will prove Theorem \ref{thm:cylindrical_r_matrix_formula}.

\begin {lemma}
    Suppose that $\Gamma_1$ and $\Gamma_2$ are perfect cylindrical networks with boundary measurement matrices $X(\lambda) = B_{\Gamma_1}(\lambda)$ and $Y(\lambda) = B_{\Gamma_2}(\lambda)$,
    of sizes $m$-by-$k$ and $k$-by-$n$ respectively.
    Let $\Gamma$ be the network obtained by glueing the sinks of $\Gamma_1$ to the sources of $\Gamma_2$, with boundary measurement matrix $B(\lambda) = X(\lambda)Y(\lambda)$.
    If the statement of Theorem \ref{thm:cylindrical_r_matrix_formula} holds for both $\Gamma_1$ and $\Gamma_2$, then it holds for $\Gamma$ as well.
\end {lemma}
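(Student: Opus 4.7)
The plan is to follow the inductive step of Lemma~\ref{lem:bracket_induction} almost verbatim, treating the spectral parameters $\lambda,\mu$ as inert scalars in the ground field $\mathbf k=\mathbb C(\lambda,\mu)$ that simply ride along through the algebraic manipulations. First I would choose the cut of the cylinder $\Sigma_\Gamma$ so that it restricts to cuts on $\Sigma_{\Gamma_1}$ and $\Sigma_{\Gamma_2}$; the multiplicativity $B(\lambda)=X(\lambda)Y(\lambda)$ then follows because the oriented intersection index of a concatenated path with the glued cut is the sum of the two individual contributions, exactly as in the cylindrical analogue of Proposition~\ref{prop:bm_matrix_concatenation} quoted after the cylindrical definition.

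The first substantive step is to verify that $\db{X(\lambda),Y(\mu)}=0$. Expanding in powers of $\lambda$ and $\mu$ reduces this to $\db{X_d,Y_e}=0$ for every $d,e$, where $X_d,Y_e$ are finite $\mathbf k$-linear combinations of paths supported entirely in $\Gamma_1$ and $\Gamma_2$ respectively. By Corollary~\ref{cor:GluingCommute} the bracket may be computed from the local contributions at the trivalent internal vertices of $\Gamma_1\sqcup\Gamma_2$, and the two families of paths share no such vertex; the only common points are the degree-two vertices created by identifying a sink of $\Gamma_1$ with a source of $\Gamma_2$, and at these bivalent points the local rules of Section~\ref{sec:PlanarAndCylindricalNetworks} produce no contribution.

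With $\db{X(\lambda),Y(\mu)}=0$ in hand, applying the matrix double Leibniz rule of Proposition~\ref{prop:matrix_leibniz_identities} twice to $\db{XY,XY}$ yields
\begin{equation*}
\db{B(\lambda),B(\mu)}=\db{X(\lambda),X(\mu)}\bigl(\mathbf 1\boxtimes Y_R(\mu)\bigr)\bigl(Y_L(\lambda)\boxtimes\mathbf 1\bigr)+\bigl(X_R(\lambda)\boxtimes\mathbf 1\bigr)\bigl(\mathbf 1\boxtimes X_L(\mu)\bigr)\db{Y(\lambda),Y(\mu)}.
\end{equation*}
Substituting the inductive hypothesis for $\db{X,X}$ and $\db{Y,Y}$ produces four terms, exactly paralleling the planar computation. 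The two middle terms both reduce to $\pm(X_R(\lambda)\boxtimes X_L(\mu))\,r_k(\lambda,\mu)\,(Y_L(\lambda)\boxtimes Y_R(\mu))$ with opposite signs by the scalar identities $(A\boxtimes\mathbf 1)(\mathbf 1\boxtimes B)=A\boxtimes B=(\mathbf 1\boxtimes B)(A\boxtimes\mathbf 1)$, and therefore cancel. The two outer terms collapse through $(A_L\boxtimes B_R)(C_L\boxtimes D_R)=A_LC_L\boxtimes B_RD_R$ into $r_m(\lambda,\mu)(B_L(\lambda)\boxtimes B_R(\mu))-\bigl((B_L(\lambda)\boxtimes B_R(\mu))r_n(\lambda,\mu)\bigr)^\tau$, which is precisely~(\ref{eq:TwistedCommutatorSpectralParameterGeneral}).

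The only genuinely new bookkeeping compared to Lemma~\ref{lem:bracket_induction} is tracking which spectral parameter decorates each matrix factor and confirming that $\tau$ passes through $r_n$ as before. Since $r_k(\lambda,\mu)\in\mathrm{Mat}_k(\mathbf k)\boxtimes\mathrm{Mat}_k(\mathbf k)$ commutes with every element of $\mathrm{Mat}_k(\mathcal C\otimes\mathcal C)$, and since $\tau$ acts per-entry on the inner $\mathcal C\otimes\mathcal C$ (hence trivially on $\mathbf k$-valued matrices), every algebraic identity used in the planar proof transfers literally. Consequently I do not anticipate a serious obstacle; the argument is essentially the planar one with spectral parameters tagged onto each symbol.
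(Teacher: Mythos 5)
Your argument is correct and is exactly what the paper intends: its entire proof reads ``The proof is identical to Lemma \ref{lem:bracket_induction},'' and you have carried out that identical Leibniz expansion while correctly tracking which of $\lambda,\mu$ decorates each $L$/$R$ factor. One small caution: your parenthetical claim that $r_k(\lambda,\mu)$ ``commutes with every element of $\mathrm{Mat}_k(\mathcal C\otimes\mathcal C)$'' is false for a general scalar matrix, but it is also never needed — the middle terms cancel because both equal $(X_R(\lambda)\boxtimes X_L(\mu))\,r_k(\lambda,\mu)\,(Y_L(\lambda)\boxtimes Y_R(\mu))$ with $r_k$ in the same position, and $\tau$ passes through $r_n$ simply because $r_n$ is scalar-valued.
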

\begin {proof}
    The proof is identical to Lemma \ref{lem:bracket_induction}.
\end {proof}

\section{Lax Equation and Trace Hamiltonians}
\label{sec:LaxEquationTraceHomiltonians}

For this section, let $\Gamma$ be a fixed perfect cylindrical network with equal number of inputs and outputs, say $N$.
Fix ground field $\mathbf k=\mathbb C(\lambda,\mu)$ consisting of rational functions in two variables $\lambda,\mu$ and denote
by $\mathcal C^{(1)}$ the $\mathbf k$-linear category generated by paths in $\Gamma$ starting/terminating at one of the boundary points.
Let $B(\lambda) \in \mathrm{Mat}_n \mathcal C^{(1)}$ be the corresponding boundary measurement matrix.
From Section \ref{sec:PlanarAndCylindricalNetworks} we know that $\mathcal C^{(1)}$ can be equipped with a double bracket which we denote by $\db{,}$.
We will show in Section \ref{sec:DoubleQuasiJacobi} that the bracket is quasi Poisson.

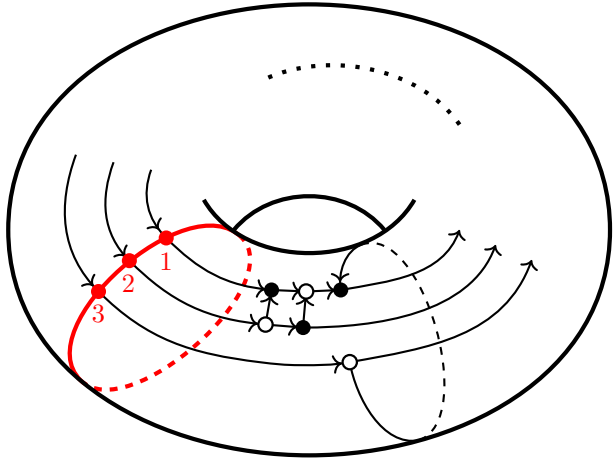
\begin{figure}
\begin{tikzpicture}[scale=2.0]
  \draw[ultra thick,red] (-0.45,-0.05) to[out=135,in=135,looseness=0.9] (-1.52,-1);
  \draw[ultra thick,red,dashed] (-1.52,-1) to[out=-45,in=-45,looseness=0.8] (-0.45,-0.05);
  \draw[thick,---->-] (-0.95,-0.054) to[out=-45,in=175] (-0.25,-0.4);
  \draw[thick,---->-] (-1.195,-0.205) to[out=-45,in=175] (-0.29,-0.63);
  \draw[thick,----->-] (-1.4,-0.41) to[out=-45,in=173] (-0.33,-0.88) to[out=-7,in=180] (-0.06,-0.9) to[out=0,in=190] (0.27,-0.88);
  \draw[thick,--->-] (-0.29,-0.63) to (-0.25,-0.4);
  \draw[thick,--->-] (-0.25,-0.4) to (-0.02,-0.41);
  \draw[thick,--->-] (-0.29,-0.63) to (-0.04,-0.65);
  \draw[thick,--->-] (-0.04,-0.64) to (-0.02,-0.41);
  \draw[thick] (0.27,-0.88) to[out=280,in=190,looseness=0.9] (0.73,-1.4);
  \draw[thick,dashed] (0.73,-1.4) to[out=20,in=20,looseness=0.8] (0.35,-0.098);
  \draw[thick,--->-] (0.35,-0.098) to[out=200,in=100,looseness=0.9] (0.21,-0.4);
  \draw[thick,--->-] (-0.02,-0.41) to (0.21,-0.4);
  \draw[thick,->] (0.21,-0.4) to[out=10,in=250,looseness=0.95] (1,0);
  \draw[thick,->] (-0.04,-0.65) to[out=0
  ,in=250,looseness=0.9] (1.24,-0.1);
  \draw[thick,->] (0.27,-0.88) to[out=10,in=250] (1.48,-0.2);
  \draw[thick,--->-] (-1.05,0.4) to[out=250,in=135,looseness=1] (-0.95,-0.054);
  \draw[thick,---->-] (-1.3,0.45) to[out=250,in=135] (-1.195,-0.205);
  \draw[thick,---->-] (-1.55,0.5) to[out=250,in=135] (-1.4,-0.41);
  \fill (-0.25,-0.4) circle (0.05);
  \fill (-0.29,-0.63) circle (0.055);
  \fill[white] (-0.29,-0.63) circle (0.04);
  \fill (-0.02,-0.41) circle (0.055);
  \fill[white] (-0.02,-0.41) circle (0.04);
  \fill (-0.04,-0.65) circle (0.05);
  \fill (0.27,-0.88) circle (0.055);
  \fill[white] (0.27,-0.88) circle (0.04);
  \fill (0.21,-0.4) circle (0.05);
  \fill[red] (-0.95,-0.054) circle (0.05);
  \fill[red] (-1.195,-0.205) circle (0.05);
  \fill[red] (-1.4,-0.41) circle (0.05);
  \draw[red,below] (-0.95,-0.09) node {$1$};
  \draw[red,below] (-1.2,-0.24) node {$2$};
  \draw[red,below] (-1.4,-0.44) node {$3$};
  \draw[ultra thick, loosely dotted] (1,0.7) to[out=125,in=25,looseness=0.9] (-0.3,1);
  \draw[ultra thick] (0,1.5) to[out=180,in=90] (-2,0) to[out=270,in=180] (0,-1.5) to[out=0,in=270] (2,0) to[out=90,in=0] (0,1.5);
  \draw[ultra thick] (-0.5,0) to[out=50,in=130] (0.5,0);
  \draw[ultra thick] (-0.7,0.2) to[out=-60,in=240] (0.7,0.2);
\end{tikzpicture}
\caption{Network on a torus with a fixed cut}
\label{fig:TorusNetwork}
\end{figure}
With $\Gamma$ we can associate a network on a torus obtained by gluing the two boundary components of a cylinder matching the $N$ points as shown on Figure \ref{fig:TorusNetwork}. Let
\begin{align}
\mathcal C=\mathcal C^\bullet=\bigoplus_{i=0}^{+\infty}\mathcal C^{(i)}
\label{eq:CatTorus}
\end{align}
stands for the category of paths in the torus network starting/terminating at one of the $N$ points. Note that $\mathcal C$ is freely generated by $\mathcal C^{(1)}$, hence we can extend the double bracket on $\mathcal C^{(1)}$ to\footnote{Note that this double bracket is closely related but different from the double bracket on a conjugate surface corresponding to the torus network. The main difference is that we no longer think of objects of $\mathcal C$ as boundary points, but rather as points on the fixed cut of a conjugate surface. This records an additional information about the cut of the conjugate surface.
}
\begin{align*}
\db{\-,\-}:\quad\mathcal C\otimes\mathcal C\rightarrow\mathcal C\otimes\mathcal C.
\end{align*}

In this section we will prove that traces of powers of boundary measurement matrix $B(\lambda)$ serve as generating functions of hamiltonians in involution. Our proof is largely inspired by the well-known Lax method in classical theory of Integrable Systems \cite{Lax'1968,Calogero'1971,Moser'1975,OlshanetskyPerelomov'1976a,Dubrovin'1977, ReynmanSemenovTyanShanskii'1989} (See \cite{ReymanSemenov-Tian-Shansky'1994} for a review). The main difference with the commutative case is that we have three levels of noncommutative Poisson brackets on $\mathcal C$:
\begin{itemize}
\item The double bracket
\begin{align*}
\db{-,-}:\mathcal C\otimes\mathcal C\rightarrow\mathcal C\otimes\mathcal C,
\end{align*}
\item An $H_0$-Poisson structure
\begin{align*}
\{-,-\}:\mathcal C_\natural\otimes\mathcal C\rightarrow\mathcal C,
\end{align*}
\item Lie Bracket
\begin{align*}
\langle-,-\rangle:\mathcal C_\natural\otimes\mathcal C_\natural\rightarrow\mathcal C_\natural,
\end{align*}
\end{itemize}
where $\mathcal C_\natural$ stands for the \emph{universal trace} in $\mathcal C$: the linear span of
loops (i.e. directed cycles) modulo commutators. This is also called the \emph{cyclic space}, since it can be thought of
as words in the generators which represent cycles, but considered equivalent up to cyclic permutation.
It is worth noting that $C_\natural$ is considered as a vector space, and not an algebra.
The $H_0$-Poisson structure and Lie bracket mentioned above are induced by the double bracket, namely
\begin{align}
\{-,-\} = \mu \circ \db{-,-} \qquad \textrm{and} \qquad \langle-,-\rangle = \overline{\mu \circ \db{-,-}}.
\label{eq:InducedBrackets}
\end{align}
where $\mu$ is the concatenation map
\begin{align*}
\mu:\quad\mathrm{Hom}(A,B)\otimes\mathrm{Hom}(B,C)\rightarrow\mathrm{Hom}(A,C).
\end{align*}
Note that the right hand side in both formulas (\ref{eq:InducedBrackets}) is well-defined only when the first argument is a linear combination of loops. Since the bracket $\{-,-\}$ has the property that $\{ab,c\} = \{ba,c\}$ whenever $ab$ is a loop, the left
argument may be considered an element of $C_\natural$, as indicated above.

The Leibniz identity in both arguments is satisfied only on the level of a double bracket, while the two major statements of Theorems \ref{th:LaxEquation} and \ref{th:CommutativityOfHamiltonians} hold only at the level of $H_0$-Poisson structure and Lie Bracket respectively. In our proofs we will be working from the top to the bottom, first applying the Leibniz identity followed by considering an induced bracket.

At the last step of the proof it will be important for us that the trigonometric $r$-matrix (\ref{eq:TrigonometricRMatrix}) has the following form
\begin{align}
r(\lambda,\mu)=\sum_{m,n=1}^N\rho_{mn}(\lambda,\mu)E_{mn}\boxtimes E_{nm},
\label{eq:RMatrixAnsatz}
\end{align}
where $\rho_{mn}(\lambda,\mu)\in\mathbf k=\mathbb C(\lambda,\mu)$ are elements of the ground field.
\begin{remark}
Note that the twisted commutator formula (\ref{eq:twisted_commutator_formula}) combined with (\ref{eq:DoubleBracketSourcesAndTargets}) imposes severe restrictions on the form of an $r$-matrix, which come from examination of sources and targets of the resulting expression. In, particular, whenever $B(\lambda)$ is a square boundary measurement matrix with distinct sources and targets of all entries, the corresponding $r$-matrix entering (\ref{eq:twisted_commutator_formula}) must necessarily be of the form (\ref{eq:RMatrixAnsatz}).
\label{rem:TwistedCommutatorSourcesAndTargets}
\end{remark}

\begin{theorem}
Let $\Gamma$ be a perfect cylindrical network, $\db{-,-}$ stands for the double bracket on $\mathcal{C}$, and let
\begin{align*}
\{-,-\}:\mathcal{C}_\natural\otimes\mathcal C\rightarrow\mathcal C
\end{align*}
be the induced $H_0$-Poisson bracket. Then boundary measurement matrix $B(\lambda)$ for network $\Gamma$ satisfies the Lax equation
\begin{subequations}
\begin{align}
\left\{\overline{\mathrm{tr}\,B(\lambda)^k},B(\mu)\right\} =M(\lambda,\mu;k)B(\mu)-B(\mu)M(\lambda,\mu;k),
\end{align}
where
\begin{align}
M(\lambda,\mu;k)_{b_1b_2}=k\rho_{b_2b_1}(\lambda,\mu) \left(B(\lambda)^k\right)_{b_1b_2}.
\end{align}
\label{eq:NCLaxEquation}
\end{subequations}
\label{th:LaxEquation}
\end{theorem}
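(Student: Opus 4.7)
The plan is to combine the matrix Leibniz rule for powers from Corollary~\ref{cor:MatrixDoubleLeibnizPowers} with the $r$-matrix form of $\db{B(\lambda),B(\mu)}$ established in Theorem~\ref{thm:cylindrical_r_matrix_formula}, and then pass to the induced $H_0$-Poisson bracket via the universal trace on the first tensor factor followed by the multiplication $\mu$.

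First I would apply Corollary~\ref{cor:MatrixDoubleLeibnizPowers} (with $\ell=1$) to rewrite
\[
\db{B(\lambda)^k, B(\mu)} = \sum_{i=0}^{k-1}\bigl(B(\lambda)_R^{\,i}\boxtimes \mathbf 1\bigr)\,\db{B(\lambda), B(\mu)}\,\bigl(B(\lambda)_L^{\,k-1-i}\boxtimes \mathbf 1\bigr),
\]
and then substitute the twisted commutator formula (\ref{eq:TwistedCommutatorSpectralParameterSquare}) for the inner bracket. Using the ansatz (\ref{eq:RMatrixAnsatz}), $r(\lambda,\mu)=\sum_{m,n}\rho_{mn}(\lambda,\mu)E_{mn}\boxtimes E_{nm}$, both $r(B_L\boxtimes B_R)$ and $((B_L\boxtimes B_R)r)^\tau$ admit an explicit entry-wise description that yields the Sweedler decomposition
\[
\db{b_{pq}(\lambda), b_{cd}(\mu)} = \rho_{pc}(\lambda,\mu)\bigl(b_{cq}(\lambda)\otimes b_{pd}(\mu)\bigr) - \rho_{dq}(\lambda,\mu)\bigl(b_{cq}(\mu)\otimes b_{pd}(\lambda)\bigr).
\]

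Next, I would compute $\{\overline{\mathrm{tr}\,B(\lambda)^k},b_{cd}(\mu)\}=\sum_a\mu\,\db{(B(\lambda)^k)_{aa},b_{cd}(\mu)}$ by expanding $(B(\lambda)^k)_{aa}$ as a sum of length-$k$ words in matrix entries and invoking the double Leibniz identity to single out one letter $b_{pq}(\lambda)$ at each position $i\in\{0,\dots,k-1\}$. After applying $\mu$, the prefix and suffix produced by the Leibniz rule combine into a single product of the form $f'\cdot V\cdot U\cdot f''$, where $U=(B(\lambda)^i)_{ap}$, $V=(B(\lambda)^{k-1-i})_{qa}$, and $f'\otimes f''$ is one of the two Sweedler pieces above. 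The key observation is that summing over the trace index $a$ \emph{cyclically recombines} $V\cdot U$ into the single matrix entry $(B(\lambda)^{k-1})_{qp}$, independently of $i$, so the outer sum over $i=0,\dots,k-1$ merely produces the factor $k$ in the definition of $M(\lambda,\mu;k)$.

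At this point, absorbing the remaining $b_{cq}(\lambda)$ (resp.\ $b_{pd}(\lambda)$) into $(B(\lambda)^{k-1})_{qp}$ yields $(B(\lambda)^k)_{cp}$ (resp.\ $(B(\lambda)^k)_{qd}$), and the two Sweedler pieces assemble into the $(c,d)$ entries of $M(\lambda,\mu;k)B(\mu)$ and $-B(\mu)M(\lambda,\mu;k)$ respectively. The main bookkeeping subtlety will be the $\tau$-twist in $[r,B_L(\lambda)\boxtimes B_R(\mu)]_\tau$: it exchanges which spectral parameter sits in which tensor slot, and after $\mu$ this controls whether $M$ multiplies $B(\mu)$ from the left or from the right. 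Getting this ordering right is precisely what turns the sum of two terms into an honest commutator and produces the Lax form stated in the theorem.
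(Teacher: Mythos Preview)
Your proposal is correct and follows essentially the same route as the paper's proof: expand $\db{B(\lambda)^k,B(\mu)}$ via the Leibniz rule for powers, substitute the $r$-matrix form of $\db{B(\lambda),B(\mu)}$, apply $\mu$ and the trace, and observe that the sum over the trace index recombines the split powers into $B(\lambda)^k$ (making the summand independent of $i$ and producing the factor $k$). The only cosmetic difference is that the paper keeps the $r$-matrix in Sweedler notation $r_1\boxtimes r_2$ through the Leibniz expansion and only invokes the specific ansatz (\ref{eq:RMatrixAnsatz}) at the very end, whereas you unpack the ansatz into entrywise identities for $\db{b_{pq}(\lambda),b_{cd}(\mu)}$ from the outset; the logical content is identical.
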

\begin{proof}
In order to avoid cumbersome intermediate formulas we employ Sweedler notation for the $r$-matrix, $r(\lambda,\mu)=r_1\boxtimes r_2$ and omit the summation index
over the products of pure tensors. Moreover, below we assume summation over repeating matrix indices, i.e.
\begin{align*}
X_{a_1a_2}Y_{a_2a_3}:=\sum_{a_2=1}^NX_{a_1a_2}Y_{a_2a_3}.
\end{align*}
Throughout the proof, the tensor product always stands for the usual tensor product of morphisms in $k$-linear category and concatenation
is applied componentwise $(f\otimes g)(h\otimes m)=(f h)\otimes(g m)$.

\underline{\textit{Step I:}} In terms of components, the twisted commutator bracket (\ref{eq:twisted_commutator_formula}) reads:
\begin{equation}
\begin{aligned}
    \db{B(\lambda)_{a_1a_3},B(\mu)_{b_1b_3}} =& \Big(\big[r(\lambda,\mu),B(\lambda)_L\boxtimes B(\mu)_R\big]_\tau\Big)_{(a_1a_3),(b_1b_3)}\\
    =& \big[(r_1)_{a_1a_2}\otimes (r_2)_{b_1b_2}\big] \big[B(\lambda)_{a_2a_3}\otimes B(\mu)_{b_2b_3}\big]\\
    & -\big[B(\mu)_{b_1b_2}\otimes B(\lambda)_{a_1a_2}\big] \big[(r_2)_{b_2b_3}\otimes (r_1)_{a_2a_3}\big]
\end{aligned}
\label{eq:RMatrixBracketIndicies}
\end{equation}

\underline{\textit{Step II:}} Compute the double bracket with a $k$-th power of a Lax matrix using double Leibniz identities
\begin{equation}
\begin{aligned}
\db{\big(B(\lambda)^k\big)_{a_1a_5},B(\mu)_{b_1b_5}} \stackrel{(\ref{eq:MatrixDoubleLeibniz})}{=}\;& \sum_{i=0}^k\left[\mathbf{1}_{b_1}\otimes \left(B(\lambda)^i\right)_{a_1a_2}\right] \db{B(\lambda)_{a_2a_4},B(\mu)_{b_1b_5}} \left[\left(B(\lambda)^{k-i-1}\right)_{a_4a_5}\otimes\mathbf{1}_{b_5}\right]\\
\stackrel{(\ref{eq:RMatrixBracketIndicies})}{=}\;&\sum_{i=0}^{k-1}\left[\mathbf{1}_{b_1}\otimes \left(B(\lambda)^i\right)_{a_1a_2}\right]\Big(\left[(r_1)_{a_2a_3}\otimes (r_2)_{b_1b_3}\right]\left[B(\lambda)_{a_3a_4}\otimes B(\mu)_{b_3b_5}\right]\\
&-\left[B(\mu)_{b_1b_3}\otimes B(\lambda)_{a_2a_3}\right] \left[(r_2)_{b_3b_5}\otimes(r_1)_{a_3a_4}\right]\Big) \left[\left(B(\lambda)^{k-i-1}\right)_{a_4a_5}\otimes\mathbf{1}_{b_5}\right]\\
=\;&\sum_{i=0}^{k-1}\left(r_1B(\lambda)^{k-i}\right)_{a_2a_5} \otimes \left(B(\lambda)^i\right)_{a_1a_2}\left(r_2B(\mu)\right)_{b_1b_5}\\
&-\sum_{i=0}^{k-1}(B(\mu)r_2)_{b_1b_5} \left(B(\lambda)^{k-i-1}\right)_{a_4a_5}\otimes \left(B(\lambda)^{i+1}r_1\right)_{a_1a_4}.
\end{aligned}
\label{eq:DoubleBracketPowerK}
\end{equation}

\underline{\textit{Step III:}} Compute an $H_0$-Poisson bracket with the trace of a $k$-th power of Lax matrix by taking the composition of (\ref{eq:DoubleBracketPowerK}) with the multiplication map.
\begin{equation}
\begin{aligned}
\left\{\overline{\left(B(\lambda)^k\right)_{a_1a_1}},B(\mu)_{b_1b_5}\right\} \;=\;&\mu\left(\db{\big(B(\lambda)^k\big)_{a_1a_1},B(\mu)_{b_1b_5}} \right)\\
\stackrel{(\ref{eq:DoubleBracketPowerK})}{=}& k\left(r_1B(\lambda)^k\right)_{a_2a_2} \left(r_2B(\mu)\right)_{b_1b_5}-k\left(B(\mu)r_2\right)_{b_1b_5} \left(B(\lambda)^kr_1\right)_{a_4a_4}\\
\stackrel{(\ref{eq:RMatrixAnsatz})}{=}&k\sum_{i,j=0}^N\rho_{ij}(\lambda,\mu) \left(E_{ij}B(\lambda)^k\right)_{a_2a_2} \left(E_{ji}B(\mu)\right)_{b_1b_5}\\
&-k\sum_{i,j=0}^N\rho_{ij}(\lambda,\mu)\left(B(\mu)E_{ji}\right)_{b_1b_5} \left(B(\lambda)^kE_{ij}\right)_{a_4a_4}\\
=\;&k\sum_{i=1}^N\rho_{ib_1}(\lambda,\mu) \left(B(\lambda)^k\right)_{b_1i}B(\mu)_{ib_5} -k\sum_{j=1}^N\rho_{b_5j}(\lambda,\mu)B(\mu)_{b_1j} \left(B(\lambda)^k\right)_{jb_5}\\
=\;&\big(M(\lambda,\mu;k)B(\mu)-B(\mu)M(\lambda,\mu;k)\big)_{b_1b_5}.
\end{aligned}
\label{eq:LaxEquationProof}
\end{equation}
\end{proof}

Similarly to the commutative case, Lax equation (\ref{eq:NCLaxEquation}) allows one to prove that Hamiltonians of the system Poisson commute with each other. However, it is worth noting that neither the double bracket, nor the $H_0$-Poisson bracket between traces of powers of boundary measurement matrix vanish identically, only the Lie
bracket between the associated elements of the cyclic space does.

\begin{theorem}
\label{th:CommutativityOfHamiltonians}
Let $\Gamma$ be a perfect cylindrical network, $\db{-,-}$ the double bracket on $\mathcal{C}$, and let $\left<-,-\right>$
be the induced bracket on the cyclic space $\mathcal{C}_\natural$.
Define the elements $H_{ij} \in \mathcal{C}$ by
\[ \mathrm{tr} \left( B_\Gamma(\lambda)^i \right) = \sum_j H_{ij} \lambda^j \]
Then $\left<\,\overline{H_{ij}},\, \overline{H_{k\ell}}\,\right> = 0$ for all $i,j,k,\ell$.
\end{theorem}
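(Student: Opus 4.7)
The strategy follows the classical $r$-matrix template for proving involutivity of trace Hamiltonians, adapted to the three-tiered noncommutative setting (double bracket, $H_0$-Poisson bracket, Lie bracket on $\mathcal C_\natural$). The plan is to start from the Lax equation of Theorem \ref{th:LaxEquation}, promote it to a statement about $\{\overline{\mathrm{tr}\,B(\lambda)^k},\, B(\mu)^\ell\}$ using the derivation property of the $H_0$-Poisson bracket in its second slot, then take a second trace and project to $\mathcal C_\natural$, where cyclic invariance collapses the remaining commutator to zero.

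\textbf{Step 1 (Leibniz extension).} Since $\{-,-\}=\mu\circ\db{-,-}$ and the double bracket satisfies (\ref{eq:DoubleLeibnizIdentityII}), the induced $H_0$-Poisson bracket $\{\overline a,-\}$ is an ordinary derivation of $\mathcal C$ for any $\overline a \in \mathcal C_\natural$. Applied entrywise and iterated, this gives
\[
\bigl\{\overline{\mathrm{tr}\,B(\lambda)^k},\, (B(\mu)^\ell)_{b_1 b_2}\bigr\} = \sum_{s=0}^{\ell-1}\bigl(B(\mu)^s\,[M(\lambda,\mu;k),B(\mu)]\,B(\mu)^{\ell-1-s}\bigr)_{b_1 b_2},
\]
where the Lax equation (\ref{eq:NCLaxEquation}) is used to substitute for $\{\overline{\mathrm{tr}\,B(\lambda)^k}, B(\mu)_{ab}\}$.

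\textbf{Step 2 (Second trace and cyclic projection).} Summing over $b_1=b_2$ yields
\[
\bigl\{\overline{\mathrm{tr}\,B(\lambda)^k},\, \mathrm{tr}\,B(\mu)^\ell\bigr\} = \sum_{s=0}^{\ell-1}\mathrm{tr}\bigl(B(\mu)^s[M,B(\mu)]B(\mu)^{\ell-1-s}\bigr),
\]
which lives in $\mathcal C$ and is \emph{not} zero. Now project to $\mathcal C_\natural$: by definition of the Lie bracket on the cyclic space, the left-hand side becomes $\bigl\langle \overline{\mathrm{tr}\,B(\lambda)^k},\, \overline{\mathrm{tr}\,B(\mu)^\ell}\bigr\rangle$. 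On the right-hand side, the key observation is that for any matrices $A,C\in\mathrm{Mat}_N(\mathcal C)$ one has $\overline{\mathrm{tr}(AC)}=\overline{\mathrm{tr}(CA)}$, because $A_{ij}C_{ji}\equiv C_{ji}A_{ij}\pmod{[\mathcal C,\mathcal C]}$. Using this cyclicity, each summand in the right-hand side equals $\overline{\mathrm{tr}([M,B(\mu)]B(\mu)^{\ell-1})}$, and expanding the commutator and cycling once more, $\overline{\mathrm{tr}(B(\mu)MB(\mu)^{\ell-1})}=\overline{\mathrm{tr}(MB(\mu)^\ell)}$, so every summand vanishes. Hence $\bigl\langle \overline{\mathrm{tr}\,B(\lambda)^k},\, \overline{\mathrm{tr}\,B(\mu)^\ell}\bigr\rangle=0$ identically in $\lambda,\mu$.

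\textbf{Step 3 (Coefficient extraction).} The resulting identity holds as a rational function of $\lambda$ and $\mu$, and the cyclic-space Lie bracket is bilinear over the ground field $\mathbf k=\mathbb C(\lambda,\mu)$ acting on $\mathcal C_\natural$ by scalars; after clearing spectral denominators this is a polynomial identity in $\lambda,\mu$. Expanding $\mathrm{tr}\,B(\lambda)^i=\sum_j H_{ij}\lambda^j$ and $\mathrm{tr}\,B(\mu)^k=\sum_\ell H_{k\ell}\mu^\ell$ and equating coefficients gives $\langle\overline{H_{ij}},\overline{H_{k\ell}}\rangle=0$.

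\textbf{Anticipated obstacle.} The delicate point is Step 2: $M(\lambda,\mu;k)$ has entries built from $B(\lambda)$, so its entries do not commute with those of $B(\mu)$, and the cycling manipulation must be performed after taking the image in $\mathcal C_\natural$ rather than at the level of $\mathcal C$-valued traces. One must verify that none of the intermediate quantities — in particular the explicit $k\rho_{b_2b_1}(\lambda,\mu)$ scalar in $M$ — interferes with the derivation property, and that the derivation identity for $\{-,-\}$ is valid even when the first argument is only defined modulo commutators. Both points are consequences of the standard fact that $\{ab,c\}=\{ba,c\}$ for loops, already invoked in the excerpt, so this is a technical verification rather than a conceptual difficulty.
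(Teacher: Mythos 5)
Your proposal is correct and follows essentially the same route as the paper's proof: apply the Leibniz property of the $H_0$-Poisson bracket in its second argument to reduce $\{\overline{\mathrm{tr}\,B(\lambda)^k},B(\mu)^\ell\}$ to the Lax equation of Theorem \ref{th:LaxEquation}, then take the trace and pass to $\mathcal C_\natural$, where cyclicity kills the commutator. The only cosmetic difference is that the paper telescopes the sum to $[M,B(\mu)^\ell]$ before projecting, whereas you project first and cycle each summand away; your explicit coefficient-extraction step is left implicit in the paper.
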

\begin{proof}
An $H_0$-Poisson bracket $\{-,-\}:\mathcal C_\natural\otimes\mathcal C\rightarrow\mathcal C$ satisfies Leibniz identity in the second argument, i.e. for all $\quad h\in\mathcal C_\natural$ and all composable $f,g\in\mathcal C$ we have
\begin{align}
\{h,fg\}=f\{h,g\}+\{h,f\}g.
\label{eq:LeibnizIdentityH0}
\end{align}

\underline{\textit{Step I:}} Compute an $H_0$-Poisson bracket between traces of powers of $B(\lambda)$
\begin{equation}
\begin{aligned}
\left\{\overline{\left(B(\lambda)^k\right)_{a_1a_1}},\left(B(\mu)^l\right)_{b_1b_5}\right\} \;\stackrel{(\ref{eq:LeibnizIdentityH0})}{=}&\sum_{j=0}^{l-1}\left(B(\mu)^j\right)_{b_1b_2} \left\{\overline{\left(B(\lambda)^k\right)_{a_1a_1}},\left(B(\mu)^l\right)_{b_2b_4}\right\} \left(B(\mu)^{l-j-1}\right)_{b_4b_5}\\
\stackrel{(\ref{eq:LaxEquationProof})}{=}&\sum_{j=0}^{l-1}\left( B(\mu)^jM(\lambda,\mu;k)B(\mu)^{l-j} -B(\mu)^{j+1}M(\lambda,\mu;k)B(\mu)^{l-j-1}\right)_{b_1b_5}\\
=\;&\left(M(\lambda,\mu;k)B(\mu)^l-B(\mu)^lM(\lambda,\mu;k)\right)_{b_1b_5}.
\end{aligned}
\label{eq:H0PowersLaxProof}
\end{equation}

\underline{\textit{Step II:}} Taking the the trace and quotient by the commutant on both sides of (\ref{eq:H0PowersLaxProof}) we obtain
\begin{align*}
\left\langle\overline{\left(B(\lambda)^k\right)_{a_1a_1}}, \overline{\left(B(\mu)^l\right)_{b_1b_1}}\right\rangle\;=\;\;&\overline{\left\{\overline{ \left(B(\lambda)^k\right)_{a_1a_1}},\left(B(\mu)^l\right)_{b_1b_1}\right\}}\\
\stackrel{(\ref{eq:H0PowersLaxProof})}{=}\;& \overline{\left(M(\lambda,\mu;k)B(\mu)^l-B(\mu)^lM(\lambda,\mu;k)\right)_{b_1b_1}}\\[5pt]
=\;\;&\overline{M(\lambda,\mu;k)_{b_1b_2}\left(B(\mu)^l\right)_{b_2b_1}} -\overline{\left(B(\mu)^l\right)_{b_2b_1}M(\lambda,\mu;k)_{b_1b_2}}\\[5pt]
=\;\;&0\bmod[\mathcal C,\mathcal C].
\end{align*}
\end{proof}

\section{$r$-Matrix Brackets and Double Quasi Jacobi Identity}
\label{sec:DoubleQuasiJacobi}

In this section, we describe sufficient conditions for an $r$-matrix to define a double quasi Poisson bracket via twisted commutator formula (\ref{eq:TwistedCommutatorSpectralParameterGeneral}). For this section, let $\mathcal C$ be a small $\mathbf k$-linear category with $m+n$ objects divided into $m$ ``sources'' $p_1,\dots,p_m$ and $n$ ``targets'' $q_1,\dots,q_n$. Moreover, we assume that $\mathcal C$ is $\mathbb Z$-graded with (at most) one-dimensional homogeneous components
\begin{align*}
\dim_{\mathbf k} \mathrm{Hom}_k(p_i,q_j)\leqslant 1,\qquad 1\leq i\leq m,\quad 1\leq j\leq n.
\end{align*}
We can introduce the formal $m\times n$ Lax matrix $B(\lambda)$ with a spectral parameter by describing its matrix elements as
\begin{align*}
B(\lambda)_{i,j}=\sum_{k\in\mathbb Z} h_{i,j}(k) \lambda^k\quad\in\quad\mathrm{Hom}_k(p_i,q_j),\qquad 1\leq i\leq m,\quad 1\leq j\leq n.
\end{align*}
Here $h_{i,j}(k)\in\mathrm{Hom}_k(p_i,q_j)$ stands for the only generator of the corresponding homogeneous component of $\mathrm{Hom}(p_i,q_j)$.

Define biderivation on $\mathcal C$ by the twisted commutator formula (\ref{eq:TwistedCommutatorSpectralParameterGeneral})
\begin{equation}
\begin{aligned}
    \db{B(\lambda),B(\mu)} \, = r(\lambda,\mu) (B_L(\lambda) \boxtimes B_R(\mu)) - \Big( (B_L(\lambda) \boxtimes B_R(\mu) \, \underline r(\lambda,\mu) \Big)^\tau,
\end{aligned}
\label{eq:TwistedCommutatorFormal}
\end{equation}
where $r$ and $\underline r$ are $m^2\times m^2$ and $n^2\times n^2$ matrices respectively, with entries in $\mathbf k(\lambda,\mu)$:
\begin{align*}
r=&\sum_{i,j,k,l=1}^m r_{ij}^{kl}(\lambda,\mu)\, E(m)_{ij}\boxtimes E(m)_{kl},&
\underline r=&\sum_{i,j,k,l=1}^n \underline r_{ij}^{kl}(\lambda,\mu)\, E(n)_{ij}\boxtimes E(n)_{kl}.
\end{align*}
The double brackets between Lax matrix elements read
\begin{align*}
\db{B(\lambda)_{a_1a_3},B(\mu)_{b_1b_3}}=\sum_{a_2,b_2=1}^m r_{a_1a_2}^{b_1b_2}(\lambda,\mu)\, (B(\lambda)_{a_2a_3}\otimes B(\mu)_{b_2b_3}) -\sum_{a_2,b_2=1}^n(B(\mu)_{b_1b_2}\otimes B(\lambda)_{a_1a_2})\, \underline r_{a_2a_3}^{b_2b_3}(\lambda,\mu).
\end{align*}

\subsection*{Skew-Symmetry}

From (\ref{eq:DoubleBracketSkewSymmetry}) we know that double bracket (\ref{eq:TwistedCommutatorFormal}) should satisfy
\begin{align*}
0=&\db{B(\lambda)_{a_1a_3},B(\mu)_{b_1b_3}} +\db{B(\mu)_{b_1b_3},B(\lambda)_{a_1a_3}}^\tau \\
=&\sum_{a_2,b_2=1}^m\big(r_{a_1a_2}^{b_1b_2}(\lambda,\mu)+r_{b_1b_2}^{a_1a_2}(\mu,\lambda)\big)\, B(\lambda)_{a_2a_3} \otimes B_{b_2b_3}(\mu)-\sum_{a_2,b_2=1}^n\big(\underline r_{a_2a_3}^{b_2b_3}(\lambda,\mu)+\underline r_{b_2b_3}^{a_2a_3}(\mu,\lambda)\big)\, B(\mu)_{b_1b_2}\otimes B(\lambda)_{a_1a_2}
\end{align*}
As a corollary, sufficient conditions for an $r$-matrix to define skew-symmetric biderivation are
\begin{subequations}
\begin{align}
r_{a_1a_2}^{b_1b_2}(\lambda,\mu)+r_{b_1b_2}^{a_1a_2}(\mu,\lambda)=0,\\
\underline r_{a_2a_3}^{b_2b_3}(\lambda,\mu)+\underline r_{b_2b_3}^{a_2a_3}(\mu,\lambda)=0.
\end{align}
\label{eq:SkewSymmetryRMatrix}
\end{subequations}

\subsection*{Left Hand Side of Double Quasi-Jacobi Identity}

Biderivation (\ref{eq:TwistedCommutatorFormal}) defines a triple bracket $\db{,,}$ (\cite{VandenBergh'2008})
which appears on the left hand side of formula \ref{eq:DoubleQuasiJacobiIdentity}.
Action of the triple bracket on the Lax matrix entries can be computed by the following formula
\begin{align*}
\db{B(\lambda)_{a_1a_4},B(\mu)_{b_1b_4},B(\nu)_{c_1c_4}}=& \db{B(\lambda)_{a_1a_4},\db{B(\mu)_{b_1b_4},B(\nu)_{c_1c_4}}}_L +\sigma_{(1,2,3)}\big(\db{B(\mu)_{b_1b_4},\db{B(\nu)_{c_1c_4}, B(\lambda)_{a_1a_4}}}_L\big)\\
&+\sigma_{(1,3,2)}\big(\db{B(\nu)_{c_1c_4},\db{B(\lambda)_{a_1a_4},B(\mu)_{b_1b_4}}}_L\big),
\end{align*}
where $\db{x,y\otimes z}_L=\db{x,y}\otimes z$, and $\sigma_{(1,2,3)}$ is the cyclic permutation of tensor factors in $\mathcal{C}^{\otimes 3}$.
For each of the three terms we get
\begin{align*}
\db{B(\lambda)_{a_1a_4},\db{B(\mu)_{b_1b_4},B(\nu)_{c_1c_4}}}_L= &r_{b_1b_2}^{c_1c_2}(\mu,\nu)r_{a_1a_2}^{b_2b_3}(\lambda,\mu)\, B(\lambda)_{a_2a_4}\otimes B(\mu)_{b_3b_4}\otimes B(\nu)_{c_2c_4}\\
&-r_{b_1b_2}^{c_1c_2}(\mu,\nu)\underline r_{a_2a_4}^{b_3b_4}(\lambda,\mu)\, B(\mu)_{b_2b_3}\otimes B(\lambda)_{a_1a_2}\otimes B(\nu)_{c_2c_4}\\
&-r_{a_1a_2}^{c_1c_2}(\lambda,\nu)\underline r_{b_2b_4}^{c_3c_4}(\mu,\nu)\, B(\lambda)_{a_2a_4}\otimes B(\nu)_{c_2c_3}\otimes B(\mu)_{b_1b_2}\\
&+\underline r_{a_2a_4}^{c_2c_3}(\lambda,\nu)\underline r_{b_2b_4}^{c_3c_4}(\mu,\nu)\, B(\nu)_{c_1c_2}\otimes B(\lambda)_{a_1a_2}\otimes B(\mu)_{b_1b_2},
\end{align*}
\begin{align*}
\sigma_{(1,2,3)}\db{B(\mu)_{b_1b_4},\db{B(\nu)_{c_1c_4}, B(\lambda)_{a_1a_4}}}_L= &r_{c_1c_2}^{a_1a_2}(\nu,\lambda)r_{b_1b_2}^{c_2c_3}(\mu,\nu)\, B(\lambda)_{a_2a_4}\otimes B(\mu)_{b_2b_4}\otimes B(\nu)_{c_3c_4}\\
&-r_{c_1c_2}^{a_1a_2}(\nu,\lambda)\underline r_{b_2b_4}^{c_3c_4}(\mu,\nu)\, B(\lambda)_{a_2a_4}\otimes B(\nu)_{c_2c_3}\otimes B(\mu)_{b_1b_2}\\
&-r_{b_1b_2}^{a_1a_2}(\mu,\lambda)\underline r_{c_2c_4}^{a_3a_4}(\nu,\lambda)\, B(\nu)_{c_1c_2}\otimes B(\mu)_{b_2b_4}\otimes B(\lambda)_{a_2a_3},\\
&\underline r_{b_2b_4}^{a_2a_3}(\mu,\lambda)\underline r_{c_2c_4}^{a_3a_4}(\nu,\lambda)\, B(\nu)_{c_1c_2}\otimes B(\lambda)_{a_1a_2}\otimes B(\mu)_{b_1b_2},
\end{align*}
\begin{align*}
\sigma_{(1,3,2)}\db{B(\nu)_{c_1c_4},\db{B(\lambda)_{a_1a_4},B(\mu)_{b_1b_4}}}_L= &r_{a_1a_2}^{b_1b_2}(\lambda,\mu)r_{c_1c_2}^{a_2a_3}(\nu,\lambda)\, B(\lambda)_{a_3a_4}\otimes B(\mu)_{b_2b_4}\otimes B(\nu)_{c_2c_4}\\
&-r_{a_1a_2}^{b_1b_2}(\lambda,\mu)\underline r_{c_2c_4}^{a_3a_4}(\nu,\lambda)\, B(\nu)_{c_1c_2}\otimes B(\mu)_{b_2b_4}\otimes B(\lambda)_{a_2a_3}\\
&-r_{c_1c_2}^{b_1b_2}(\nu,\mu)r_{a_2a_4}^{b_3b_4}(\lambda,\mu)\, B(\mu)_{b_2b_3}\otimes B(\lambda)_{a_1a_2}\otimes B(\nu)_{c_2c_4}\\
&+\underline r_{c_2c_4}^{b_2b_3}(\nu,\mu)\underline r_{a_2a_4}^{b_3b_4}(\lambda,\mu)\, B(\nu)_{c_1c_2}\otimes B(\lambda)_{a_1a_2}\otimes B(\mu)_{b_1b_2}.
\end{align*}
Here and until the end of the subsection we assume the summation over repeating matrix indices. Now, combining coefficients in front of each tensor product we obtain
\begin{equation*}
\begin{aligned}
\db{B(\lambda)_{a_1a_4},B(\mu)_{b_1b_4},B(\nu)_{c_1c_4}}=&
\left(r_{b_1b_2}^{c_1c_3}(\mu,\nu)r_{a_1a_3}^{b_2b_3}(\lambda,\mu) +r_{c_1c_2}^{a_1a_3}(\nu,\lambda)r_{b_1b_3}^{c_2c_3}(\mu,\nu) +r_{a_1a_2}^{b_1b_3}(\lambda,\mu)r_{c_1c_3}^{a_2a_3}(\nu,\lambda)\right)
\\&\qquad\times
B(\lambda)_{a_3a_4}\otimes B(\mu)_{b_3b_4}\otimes B(\nu)_{c_3c_4}\\
&+\left(\underline r_{c_2c_3}^{a_2a_4}(\lambda,\nu)\underline r_{b_2b_4}^{c_3c_4}+\underline r_{b_2b_4}^{a_2a_3}(\mu,\lambda)\underline r_{c_2c_4}^{a_3a_4}(\nu,\lambda)+\underline r_{c_2c_4}^{b_2b_3}(\nu,\mu)\underline r_{a_2a_4}^{b_3b_4}(\lambda,\mu)\right)
\\&\qquad\times
B(\nu)_{c_1c_2}\otimes B(\lambda)_{a_1a_2}\otimes B(\mu)_{b_1b_2}\\
&-\left(r_{b_1b_2}^{c_1c_2}(\mu,\nu)\underline r_{a_2a_4}^{b_3b_4}(\lambda,\mu)+r_{c_1c_2}^{b_2b_3}(\nu,\mu)\underline r_{a_2a_n}^{b_3b_4}(\lambda,\mu)\right)\, B(\mu)_{b_2b_3}\otimes B(\lambda)_{a_1a_2}\otimes B(\nu)_{c_2c_4}\\
&-\left(r_{a_1a_2}^{c_1c_2}(\lambda,\nu)\underline r_{b_2b_4}^{c_3c_4}(\mu,\nu)+r_{c_1c_2}^{a_1a_2}(\nu,\lambda)\underline r_{b_2b_4}^{c_3c_4}(\mu,\nu)\right)\, B(\lambda)_{a_2a_4}\otimes B(\nu)_{c_2c_3}\otimes B(\mu)_{b_1b_2}\\
&-\left(r_{b_1b_2}^{a_1a_2}(\mu,\lambda)\underline r_{c_2c_4}^{a_3a_4}(\nu,\lambda)+r_{a_1a_2}^{b_1b_2}(\lambda,\mu)\underline r_{c_2c_4}^{a_3a_4}(\nu,\lambda)\right)\, B(\nu)_{c_1c_2}\otimes B(\mu)_{b_2b_4}\otimes B(\lambda)_{a_2a_3}.
\end{aligned}
\end{equation*}
Assuming that both $r$ and $\underline r$ satisfy skew-symmetry condition (\ref{eq:SkewSymmetryRMatrix}), the last three terms cancel in the formula above and we obtain
\begin{equation}
\begin{aligned}
\db{B&(\lambda)_{a_1a_4},B(\mu)_{b_1b_4},B(\nu)_{c_1c_4}}=\\
&
\quad\left(r_{b_1b_2}^{c_1c_3}(\mu,\nu)r_{a_1a_3}^{b_2b_3}(\lambda,\mu) +r_{c_1c_2}^{a_1a_3}(\nu,\lambda)r_{b_1b_3}^{c_2c_3}(\mu,\nu) +r_{a_1a_2}^{b_1b_3}(\lambda,\mu)r_{c_1c_3}^{a_2a_3}(\nu,\lambda)\right)
B(\lambda)_{a_3a_4}\otimes B(\mu)_{b_3b_4}\otimes B(\nu)_{c_3c_4}\\
&+\left(\underline r_{c_2c_3}^{a_2a_4}(\lambda,\nu)\underline r_{b_2b_4}^{c_3c_4}(\mu,\nu)+\underline r_{b_2b_4}^{a_2a_3}(\mu,\lambda)\underline r_{c_2c_4}^{a_3a_4}(\nu,\lambda)+\underline r_{c_2c_4}^{b_2b_3}(\nu,\mu)\underline r_{a_2a_4}^{b_3b_4}(\lambda,\mu)\right)
B(\nu)_{c_1c_2}\otimes B(\lambda)_{a_1a_2}\otimes B(\mu)_{b_1b_2}
\end{aligned}
\label{eq:JacobiLHSLax}
\end{equation}

\subsection*{Right Hand Side of Double Quasi-Jacobi Identity}

The trivector from the right hand side of the double Quasi Jacobi Identity (\ref{eq:DoubleQuasiJacobiIdentity}) reads:
\begin{align*}
\mathbf{T}=\frac14\sum_{V\in\mathrm{Obj}(\mathcal C)}\overline{\partial_V\ast\partial_V\ast\partial_V}=\sum_{i=1}^m\overline{\partial_{p_i} \ast\partial_{p_i} \ast\partial_{p_i}}+\sum_{j=1}^n\overline{\partial_{q_j} \ast\partial_{q_j} \ast\partial_{q_j}}
\end{align*}
From (\ref{eq:UniderivationV}) we get the action of uniderivations on matrix elements
\begin{align*}
\partial_{p_k}B(\lambda)_{ij}=&-\delta_{ki}(\mathbf 1_{p_i}\otimes B(\lambda)_{ij}),\\
\partial_{q_k}B(\lambda)_{ij}=&\delta_{kj}(B(\lambda)_{ij}\otimes\mathbf 1_{q_i}).
\end{align*}
As a corollary, we get
\begin{equation}
\begin{aligned}
\mathbf T(B(\lambda)_{a_1a_4}\otimes B(\mu)_{b_1b_4}\otimes B(\nu)_{c_1c_4})=&-\frac14\delta_{a_1b_1}\delta_{a_1c_1}B(\lambda)_{a_1a_4}\otimes B(\mu)_{b_1b_4}\otimes B(\nu)_{c_1c_4}\\
&+\frac14\delta_{a_4b_4}\delta_{a_4c_4} B(\nu)_{c_1c_4}\otimes B(\lambda)_{a_1a_4}\otimes B(\mu)_{b_1b_4}.
\end{aligned}
\label{eq:JacobiRHSLax}
\end{equation}

\subsection*{Quasi Yang-Baxter Equation}

Combining (\ref{eq:JacobiLHSLax}) with (\ref{eq:JacobiRHSLax}) we obtain sufficient conditions for $r$ and $\underline r$ which would guarantee that biderivation (\ref{eq:TwistedCommutatorFormal}) satisfies double Quasi Jacobi Identity
\begin{subequations}
\begin{equation}
\begin{aligned}
\sum_{b_2=1}^m r_{b_1b_2}^{c_1c_3}(\mu,\nu)r_{a_1a_3}^{b_2b_3}(\lambda,\mu) +\sum_{c_2=1}^m r_{c_1c_2}^{a_1a_3}(\nu,\lambda)r_{b_1b_3}^{c_2c_3}(\mu,\nu) +\sum_{a_2=1}^m r_{a_1a_2}^{b_1b_3}(\lambda,\mu)r_{c_1c_3}^{a_2a_3}(\nu,\lambda)&\\
=- \frac14\delta_{a_1b_1}\delta_{a_1c_1}&\delta_{a_1a_3}\delta_{b_1b_3}\delta_{c_1c_3},
\end{aligned}
\label{eq:QuasiYangBaxterI}
\end{equation}
\begin{equation}
\begin{aligned}
\sum_{c_3=1}^n\underline r_{c_2c_3}^{a_2a_4}(\lambda,\nu)\underline r_{b_2b_4}^{c_3c_4}(\mu,\nu)+\sum_{a_3=1}^n\underline r_{b_2b_4}^{a_2a_3}(\mu,\lambda)\underline r_{c_2c_4}^{a_3a_4}(\nu,\lambda)+\sum_{b_3=1}^n\underline r_{c_2c_4}^{b_2b_3}(\nu,\mu)\underline r_{a_2a_4}^{b_3b_4}(\lambda,\mu)&\\
=\frac14\delta_{a_4b_4}\delta_{a_4c_4}& \delta_{a_2a_4}\delta_{b_2b_4}\delta_{c_2c_4}
\label{eq:QuasiYangBaxterII}
\end{aligned}
\end{equation}
\label{eq:QuasiYangBaxter}
\end{subequations}
Note that equation (\ref{eq:QuasiYangBaxterII}) has exactly the same form as the equation (\ref{eq:QuasiYangBaxterI}) modulo relabelling indices and skew symmetry relations (\ref{eq:SkewSymmetryRMatrix}).

\begin{proposition}
Trigonometric $r$-matrix (\ref{eq:TrigonometricRMatrix}) satisfies skew-symmetry identity (\ref{eq:SkewSymmetryRMatrix}) and identity (\ref{eq:QuasiYangBaxter}).
\end{proposition}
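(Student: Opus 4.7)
The plan is to verify both identities by direct substitution of the explicit entries of the trigonometric $r$-matrix, organizing the computation so that the bulk of the work for the quasi Yang--Baxter identity reduces to a handful of elementary rational-function checks indexed by the equality pattern of three indices.

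\textbf{Skew-symmetry.} Writing the trigonometric $r$-matrix (\ref{eq:TrigonometricRMatrix}) in the form (\ref{eq:RMatrixAnsatz}), the coefficients are
\[ \rho_{mm}(\lambda,\mu)=\tfrac{\mu+\lambda}{2(\mu-\lambda)},\qquad \rho_{ij}(\lambda,\mu)=\tfrac{\lambda}{\mu-\lambda}\ \text{for}\ i<j,\qquad \rho_{ji}(\lambda,\mu)=\tfrac{\mu}{\mu-\lambda}\ \text{for}\ i<j, \]
with all other $\rho_{mn}$ vanishing. Consequently $r_{a_1a_2}^{b_1b_2}=\rho_{a_1a_2}(\lambda,\mu)\,\delta_{a_1b_2}\delta_{a_2b_1}$, so (\ref{eq:SkewSymmetryRMatrix}) reduces to the single scalar identity $\rho_{mn}(\lambda,\mu)+\rho_{nm}(\mu,\lambda)=0$, which I would verify by inspection in the three cases $m=n$, $m<n$, $m>n$.

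\textbf{Reduction of the quasi Yang--Baxter identity.} Substituting the ansatz into (\ref{eq:QuasiYangBaxterI}), each of the three products $r\cdot r$ carries four Kronecker deltas. The summation over the internal index together with two of these deltas collapses in each term, and the remaining deltas combine into a \emph{single common factor} $\delta_{a_1b_3}\delta_{b_1c_3}\delta_{c_1a_3}$. Introducing free parameters $\alpha=a_1=b_3$, $\beta=b_1=c_3$, $\gamma=c_1=a_3$, the LHS of (\ref{eq:QuasiYangBaxterI}) becomes $\Phi(\alpha,\beta,\gamma)\cdot\delta_{a_1b_3}\delta_{b_1c_3}\delta_{c_1a_3}$ where
\[ \Phi(\alpha,\beta,\gamma):=\rho_{\beta\gamma}(\mu,\nu)\rho_{\alpha\gamma}(\lambda,\mu)+\rho_{\gamma\alpha}(\nu,\lambda)\rho_{\beta\alpha}(\mu,\nu)+\rho_{\alpha\beta}(\lambda,\mu)\rho_{\gamma\beta}(\nu,\lambda). \]
The RHS of (\ref{eq:QuasiYangBaxterI}) survives only when all the deltas align, which amounts to $\alpha=\beta=\gamma$, and equals $-\tfrac14$ there. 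Hence the identity reduces to $\Phi(\alpha,\beta,\gamma)=-\tfrac14\,\delta_{\alpha\beta}\delta_{\alpha\gamma}$.

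\textbf{Case analysis.} I would then split on the equality pattern of $\alpha,\beta,\gamma$. When $\alpha=\beta=\gamma$ all three $\rho$'s are diagonal, so $\Phi$ equals $AB+BC+CA$ with $A=\rho_{\alpha\alpha}(\lambda,\mu)$, $B=\rho_{\alpha\alpha}(\mu,\nu)$, $C=\rho_{\alpha\alpha}(\nu,\lambda)$; clearing the common denominator $4(\mu-\lambda)(\nu-\mu)(\lambda-\nu)$ reduces this to a polynomial identity in $\lambda,\mu,\nu$ evaluating to $-\tfrac14$. In each of the remaining cases (exactly two of $\alpha,\beta,\gamma$ coincide, or all three distinct), $\Phi$ mixes diagonal and off-diagonal entries and must vanish; after clearing the common denominator $2(\mu-\lambda)(\nu-\mu)(\lambda-\nu)$ (or a suitable multiple) the numerator telescopes, e.g.\ when $\alpha,\beta,\gamma$ are pairwise distinct (with any ordering) it simplifies to $\lambda\bigl[\mu(\lambda-\nu)+\nu(\mu-\lambda)+\lambda(\nu-\mu)\bigr]=0$. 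Identity (\ref{eq:QuasiYangBaxterII}) for $\underline r$ has the same form as (\ref{eq:QuasiYangBaxterI}) up to index relabelling (as noted in the excerpt), so the same calculation covers it. The main effort is bookkeeping---determining whether each $\rho$ in a given case is diagonal, upper-triangular, or lower-triangular, and tabulating the resulting numerators; once that table is set up, every algebraic check is an immediate cancellation.
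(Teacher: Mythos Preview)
Your approach is essentially the same as the paper's: write $r_{a_1a_2}^{b_1b_2}=\rho_{a_1a_2}(\lambda,\mu)\delta_{a_1b_2}\delta_{a_2b_1}$, collapse the Kronecker deltas in (\ref{eq:QuasiYangBaxterI}) to the common factor $\delta_{a_1b_3}\delta_{b_1c_3}\delta_{c_1a_3}$, and reduce to the scalar identity $\Phi(\alpha,\beta,\gamma)=-\tfrac14\delta_{\alpha\beta}\delta_{\alpha\gamma}$, which is then checked case by case. The only organizational difference is that the paper exploits the invariance of the scalar identity under the simultaneous cyclic shift $(\alpha,\beta,\gamma)\mapsto(\beta,\gamma,\alpha)$, $(\lambda,\mu,\nu)\mapsto(\mu,\nu,\lambda)$ to fix $\alpha$ as the smallest index and enumerate exactly six cases; you instead split on the equality pattern, which is fine but leaves a few more sub-orderings to tabulate in the pairwise-distinct and two-equal cases (your parenthetical ``with any ordering'' is a slight overstatement---different orderings produce different numerators, though each one telescopes in the same way).
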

\begin{proof}
Recall that matrix elements of the trigonometric $r$-matrix (\ref{eq:TrigonometricRMatrix}) have the following form
\begin{align}
r_{a_1a_2}^{b_1b_2}(\lambda,\mu)=\rho_{a_1a_2}(\lambda,\mu)\delta_{a_1b_2}\delta_{a_2b_1}, \qquad \rho_{a_1a_2}(\lambda,\mu)=\left\{\begin{array}{ll}
\frac\lambda{\mu-\lambda}&a_1<a_2,\\[5pt]
\frac12\frac{\mu+\lambda}{\mu-\lambda}&a_1=a_2,\\[5pt]
\frac\mu{\mu-\lambda}&a_1>a_2.
\end{array}\right.
\label{eq:TrigonometricRMatrixMatrixElements}
\end{align}
From (\ref{eq:TrigonometricRMatrix}) it is immediate that trigonometric $r$-matrix satisfies skew-symmetry identity (\ref{eq:SkewSymmetryRMatrix}). At the same time, equation (\ref{eq:QuasiYangBaxterI}) acquires the following form
\begin{align*}
\sum_{b_2=1}^m\rho_{b_1b_2}(\mu,\nu)\delta_{b_1c_3}\delta_{b_2c_1} \rho_{a_1a_3}(\lambda,\mu)\delta_{a_1b_3}\delta_{a_3b_2} +\sum_{c_2=1}^m\rho_{c_1c_2}(\nu,\lambda)\delta_{c_1a_3}\delta_{c_2a_1} \rho_{b_1b_3}(\mu,\nu)\delta_{b_1c_3}\delta_{b_3c_2}\\ +\sum_{a_2=1}^m\rho_{a_1a_2}(\lambda,\mu)\delta_{a_1b_3}\delta_{a_2b_1} \rho_{c_1c_3}(\nu,\lambda)\delta_{c_1a_3}\delta_{c_3a_2}=-\frac14 \delta_{a_1b_1}\delta_{a_1c_1}&\delta_{a_1a_3}\delta_{b_1b_3}\delta_{c_1c_3}
\end{align*}
Taking the sum and factoring out $\delta_{b_1c_3}\delta_{a_1b_3}\delta_{a_3c_1}$ we obtain an equivalent equation for $\rho$:
\begin{align}
\rho_{b_1c_1}(\mu,\nu)\rho_{a_1c_1}(\lambda,\mu) +\rho_{c_1a_1}(\nu,\lambda)\rho_{b_1a_1}(\mu,\nu) +\rho_{a_1b_1}(\lambda,\mu)\rho_{c_1b_1}(\nu,\lambda) +\frac14\delta_{a_1b_1}\delta_{a_1c_1}=0.
\label{eq:QuasiYangBaxterRho}
\end{align}

Note that equation (\ref{eq:QuasiYangBaxterRho}) is invariant under the simultaneous cyclic permutation of $(a_1,b_1,c_1)$ and $(\lambda,\mu,\nu)$, so without loss of generality we can assume that $a_1$ is always the smallest index. This leaves us with six cases
\begin{enumerate}[{Case} 1:]
\item When $a_1<b_1<c_1$, substituting (\ref{eq:TrigonometricRMatrixMatrixElements}) to (\ref{eq:QuasiYangBaxterRho}) we get
\begin{align*}
\frac{\mu}{\nu-\mu}\frac{\lambda}{\mu-\lambda} +\frac{\lambda}{\lambda-\nu}\frac{\nu}{\nu-\mu} +\frac{\lambda}{\mu-\lambda}\frac{\lambda}{\lambda-\nu}=0.
\end{align*}
\item When $a_1<c_1<b_1$ we have
\begin{align*}
\frac{\nu}{\nu-\mu}\frac{\lambda}{\mu-\lambda} +\frac{\lambda}{\lambda-\nu}\frac{\nu}{\nu-\mu} +\frac{\lambda}{\mu-\lambda}\frac{\nu}{\lambda-\nu}=0.
\end{align*}
\item When $a_1=b_1<c_1$
\begin{align*}
\frac{\mu}{\nu-\mu}\frac{\lambda}{\mu-\lambda} +\frac{\lambda}{\lambda-\nu}\,\frac12\frac{\mu+\nu}{\nu-\mu} +\frac12\frac{\lambda+\mu}{\mu-\lambda}\frac{\lambda}{\lambda-\nu}=0.
\end{align*}
\item When $a_1<b_1=c_1$
\begin{align*}
\frac12\frac{\nu+\mu}{\nu-\mu}\frac{\lambda}{\mu-\lambda} +\frac{\lambda}{\lambda-\nu}\frac{\nu}{\nu-\mu} +\frac{\lambda}{\mu-\lambda}\,\frac12\frac{\lambda+\nu}{\lambda-\nu}=0
\end{align*}
\item When $a_1=c_1<b_1$
\begin{align*}
\frac\nu{\nu-\mu}\,\frac12\frac{\mu+\lambda}{\mu-\lambda} +\frac12\frac{\lambda+\nu}{\lambda-\nu}\frac\nu{\nu-\mu} +\frac\lambda{\mu-\lambda}\frac\nu{\lambda-\nu}=0
\end{align*}
\item Finally, when $a_1=b_1=c_1$ we have
\begin{align*}
\frac12\frac{\mu+\nu}{\nu-\mu}\,\frac12\frac{\mu+\lambda}{\mu-\lambda} +\frac12\frac{\lambda+\nu}{\lambda-\nu}\,\frac12\frac{\nu+\mu}{\nu-\mu} +\frac12\frac{\mu+\lambda}{\mu-\lambda}\,\frac12\frac{\lambda+\nu}{\lambda-\nu}+\frac14=0.
\end{align*}
\end{enumerate}
\end{proof}

\section{Refactorization Dynamics}

In the theory of classical Integrable Systems, Lax equation with $r$-matrix (\ref{eq:RMatrixDisc}) allows one to define a Hamilton flow which interpolates the refactorization of Lax matrix $B$ as a product of lower and upper triangular matrices \cite{SemenovTianShansky'1985} (see also \cite{ReymanSemenov-Tian-Shansky'1994} and \cite{HoffmannKellendonkKutzReshetikhin'2000}). In this section we show that some of the classical results about refactorization dynamics generalize to noncommutative case.

Let $\Gamma$ be a perfect cylindrical network with equal number of inputs and outputs, say $N$. As in Section \ref{sec:LaxEquationTraceHomiltonians}, consider a torus glued out of cylinder by matching the marked points and denote by $\mathcal C$ the associated category of paths starting/terminating at one of the $N$ marked points (see Figure \ref{fig:TorusNetwork}). Denote the corresponding boundary measurement matrix by $B(\lambda)$.

In the classical case, interpolating hamiltonian for refactorization dynamics is given by
\begin{align}
H=\frac12\mathrm{tr}\left(\log B\right)^2.
\label{eq:ClassicalInterpolatingHamiltonian}
\end{align}
For the purpose of this section we will restrict our attention to the formal neighbourhood of the identity matrix. Then one can think of (\ref{eq:ClassicalInterpolatingHamiltonian}) as a power series in $(B-\mathbf 1)$.

\subsection{Brackets Between Infinite Series}

Our first major goal is compute a Hamilton flow given by (\ref{eq:ClassicalInterpolatingHamiltonian}) in the noncommutative case. To this end we must extend double Poisson brackets (\ref{eq:twisted_commutator_formula}) between matrix elements of the boundary measurement matrix $B(\lambda)$ to an appropriate completion. Indeed, let
\begin{align}
X(\lambda)=B(\lambda)-\mathbf 1
\label{eq:ShiftedBoundaryL}
\end{align}
be a difference with the identity matrix. Because there is no relations between matrix elements of $X$ we can introduce a grading on $\mathcal C$ by nonnegative integers
\begin{align*}
\mathcal C=\bigoplus_{m\in\mathbb Z_{\geqslant0}}\mathcal C_{(m)},
\end{align*}
where $\mathcal C_{(m)}$ is spanned by monomials of length $m$ in matrix elements $X_{i,j}$. Here we allow only composable monomials such as $X_{1,2}X_{2,4}X_{4,7}$.

Note that the above grading is slightly different from grading (\ref{eq:CatTorus}) by the winding number due to the fact that $X(\lambda)$ is shifted by identity matrix (\ref{eq:ShiftedBoundaryL}). In particular, the double bracket is no longer homogeneous with respect to the above grading. Instead, we have the following
\begin{lemma}
The restriction of the double bracket on homogeneous components is given by a map
\begin{align}
\db{\-,\-}:\quad \mathcal C_{(m)}\otimes\mathcal C_{(n)}\rightarrow \bigoplus_{\epsilon=0}^2\;\left(\mathcal C\otimes\mathcal C\right)_{m+n-\epsilon}.
\label{eq:LDegreeBoundDoubleBracket}
\end{align}
\end{lemma}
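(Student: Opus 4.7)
The plan is to establish the degree bound first for atomic brackets (the case $m=n=1$) using the twisted commutator formula, and then extend to arbitrary monomials by iterating the double Leibniz identities (\ref{eq:DoubleLeibnizIdentityI})--(\ref{eq:DoubleLeibnizIdentityII}).

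For the base case, I would use that $\db{-,\mathbf 1}$ and $\db{\mathbf 1,-}$ both vanish, so that $\db{X_{ij}(\lambda),X_{kl}(\mu)}=\db{B_{ij}(\lambda),B_{kl}(\mu)}$. Applying the $r$-matrix formula (\ref{eq:TwistedCommutatorSpectralParameterGeneral}) at the component level (as in equation (\ref{eq:RMatrixBracketIndicies})), this bracket is a $\mathbf k$-linear combination of terms of the shape $B_{i'j'}\otimes B_{k'l'}$. Substituting $B_{i'j'}=\delta_{i'j'}\mathbf 1+X_{i'j'}$ and expanding, every such tensor decomposes into four summands
\begin{equation*}
\delta_{i'j'}\delta_{k'l'}\,\mathbf 1\otimes\mathbf 1\;+\;\delta_{i'j'}\,\mathbf 1\otimes X_{k'l'}\;+\;\delta_{k'l'}\,X_{i'j'}\otimes\mathbf 1\;+\;X_{i'j'}\otimes X_{k'l'},
\end{equation*}
lying in degrees $0,1,1,2$ of $\mathcal C\otimes\mathcal C$ respectively. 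Hence $\db{X_{ij}(\lambda),X_{kl}(\mu)}\in\bigoplus_{\epsilon=0}^{2}(\mathcal C\otimes\mathcal C)_{2-\epsilon}$, which is the desired bound for $m=n=1$.

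For the inductive step, I would apply (\ref{eq:DoubleLeibnizIdentityI}) and (\ref{eq:DoubleLeibnizIdentityII}) repeatedly to reduce a bracket between monomials of lengths $m$ and $n$ to a sum of atomic brackets surrounded by prefix and suffix factors. Concretely, for composable monomials $u=X_{i_1j_1}\cdots X_{i_mj_m}\in\mathcal C_{(m)}$ and $v=X_{k_1l_1}\cdots X_{k_nl_n}\in\mathcal C_{(n)}$, the iterated Leibniz expansion takes the form
\begin{equation*}
\db{u,v}=\sum_{a=1}^{m}\sum_{b=1}^{n}\bigl(X_{k_1l_1}\cdots X_{k_{b-1}l_{b-1}}\otimes X_{i_1j_1}\cdots X_{i_{a-1}j_{a-1}}\bigr)\,\db{X_{i_aj_a},X_{k_bl_b}}\,\bigl(X_{i_{a+1}j_{a+1}}\cdots X_{i_mj_m}\otimes X_{k_{b+1}l_{b+1}}\cdots X_{k_nl_n}\bigr).
\end{equation*}
The sandwiching tensors contribute total degree $(b-1)+(a-1)+(m-a)+(n-b)=m+n-2$ in $\mathcal C\otimes\mathcal C$, since the grading on $\mathcal C\otimes\mathcal C$ is additive and multiplication of homogeneous elements is additive in degree. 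Combining this with the fact that each atomic bracket $\db{X_{i_aj_a},X_{k_bl_b}}$ lies in degrees $\{0,1,2\}$, every summand belongs to one of the components $(\mathcal C\otimes\mathcal C)_{m+n},(\mathcal C\otimes\mathcal C)_{m+n-1},(\mathcal C\otimes\mathcal C)_{m+n-2}$, yielding the claim.

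The only mild subtlety will be bookkeeping of sources and targets of the $\mathbf 1_{s(\cdot)},\mathbf 1_{t(\cdot)}$ factors that appear when one applies the Leibniz identities literally. These identity morphisms carry degree zero in the $\mathcal C_{(\bullet)}$ grading, so they do not affect the degree count; they merely ensure composability of the various tensor factors. Beyond that, the argument is purely combinatorial counting of $X$-factors, so I do not expect any essential obstacle.
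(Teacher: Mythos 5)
Your proposal is correct and follows essentially the same route as the paper: compute the bracket between the degree-one generators $X_{ij}$ via the $r$-matrix formula, observe after substituting $B=X+\mathbf 1$ that it lands in degrees $0,1,2$, and then propagate the bound to arbitrary monomials through the double Leibniz identities. Your version merely makes the Leibniz bookkeeping more explicit than the paper does.
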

\begin{proof}
The double bracket between matrix elements of $X(\lambda)$ reads
\begin{align*}
\db{X(\lambda)_{a_1a_3},X(\mu)_{b_1b_3}}\;\;=\;\;&\db{B(\lambda)_{a_1a_3},B(\mu)_{b_1b_3}}\\
\stackrel{(\ref{eq:RMatrixBracketIndicies})}{=}\;
&\big[(r_1)_{a_1a_2}\otimes (r_2)_{b_1b_2}\big] \big[\left(X(\lambda)+\mathbf{1}\right)_{a_2a_3}\otimes \left(X(\mu)+\mathbf{1}\right)_{b_2b_3}\big]\\
    & -\big[\left(X(\mu)+\mathbf{1}\right)_{b_1b_2}\otimes \left(X(\lambda)+\mathbf{1}\right)_{a_1a_2}\big] \big[(r_2)_{b_2b_3}\otimes (r_1)_{a_2a_3}\big].
\end{align*}
As a corollary,
\begin{align}
\db{X(\lambda)_{a_1a_3},X(\mu)_{b_1b_3}}\quad\in\; \bigoplus_{\epsilon_1,\epsilon_2=0,1}\left(\mathcal C_{(1-\epsilon_1)}\otimes\mathcal C_{(1-\epsilon_2)}\right)\quad\subset\quad \bigoplus_{\epsilon=0}^2\;\left(\mathcal C\otimes\mathcal C\right)_{(2-\epsilon)}
\label{eq:LGradingBracketBetweenGenerators}
\end{align}
Combining (\ref{eq:LGradingBracketBetweenGenerators}) with the double Leibniz identities (\ref{eq:DoubleLeibnizIdentityI})--(\ref{eq:DoubleLeibnizIdentityII}) we get the statement of the Lemma.
\end{proof}

Now let $\hat{\mathcal C}$ stand for the $\mathbf k$-linear space of formal power series in matrix elements of $X$
\begin{align*}
\hat{\mathcal C}=\prod_{m\in\mathbb Z_{\geqslant0}}\mathcal C_{(m)}.
\end{align*}
Because the degree of monomials in the double bracket between homogeneous elements is bounded from below by (\ref{eq:LDegreeBoundDoubleBracket}) we can extend the double bracket on pairs of series
\begin{align*}
\db{\-,\-}:\;\hat{\mathcal C}\otimes\hat{\mathcal C}\rightarrow\hat{\mathcal C}\otimes\hat{\mathcal C}.
\end{align*}
Moreover, formula (\ref{eq:LDegreeBoundDoubleBracket}) ensures that the induced brackets can be extended to an appropriate completion as well
\begin{align*}
\{\-,\-\}:&\;\hat{\mathcal C}_\natural\otimes\hat{\mathcal C}\rightarrow\hat{\mathcal C}, \qquad\textrm{where}\qquad \{-,-\} = \mu \circ \db{-,-},\\[5pt]
\langle\-,\-\rangle:&\;\hat{\mathcal C}_\natural\otimes\hat{\mathcal C}_\natural\rightarrow\hat{\mathcal C}_\natural,\qquad \textrm{where} \qquad \langle-,-\rangle = \overline{\mu \circ \db{-,-}},
\end{align*}
where $\hat{\mathcal C}_\natural$ stands for the completion of the cyclic space
\begin{align*}
\hat{\mathcal C}_\natural=\prod_{m\in\mathbb Z_{\geqslant0}}\big({\mathcal C}_\natural\big)_{(m)}.
\end{align*}

\subsection{Noncommutative Refactorization Flow}

We are now ready to calculate the Hamilton flow given by noncommutative analogue of (\ref{eq:ClassicalInterpolatingHamiltonian}). We will be interested in the action of this Hamilton flow on the boundary measurement matrix $B(\lambda)$. As in commutative case \cite{ReymanSemenov-Tian-Shansky'1994}, the central role in the calculation is played by Lax equation (\ref{eq:NCLaxEquation}).

\begin{proposition}
Consider a hamiltonian
\begin{align*}
H(\lambda)=\frac12\,\overline{\mathrm{tr}\left(\log\,B(\lambda)\right)^2}
\qquad\in\quad\hat{\mathcal C}_\natural,
\end{align*}
then
\begin{align*}
\left\{H(\lambda),B(\mu)\right\}=\widetilde{M}(\lambda,\mu)B(\mu) -B(\mu)\widetilde{M}(\lambda,\mu),
\qquad\textrm{where}\quad
\widetilde{M}(\lambda,\mu)_{b_1b_2}=\rho_{b_2b_1}(\lambda,\mu)\left(\log B(\mu)\right)_{b_1b_2}.
\end{align*}
Here $\log(\alpha)=\sum_{k=1}^{+\infty}\frac{(-1)^{k+1}}{k}(\alpha-1)^k$ stands for its power series at $\alpha=1$.
\end{proposition}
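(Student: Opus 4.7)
The plan is to derive the flow by expressing the Hamiltonian $H(\lambda)$ as a formal power series in the trace-power Hamiltonians of Theorem \ref{th:LaxEquation} and then resumming the result into closed form. A preparatory fact I will use is that the induced $H_0$-Poisson bracket extends by continuity to the completions, $\{-,-\}\colon\hat{\mathcal C}_\natural\otimes\hat{\mathcal C}\to\hat{\mathcal C}$; the grading bound (\ref{eq:LDegreeBoundDoubleBracket}) guarantees that the Lax equation can legitimately be applied term by term to a convergent series in $\hat{\mathcal C}_\natural$.

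My first step is to rewrite $H(\lambda)$ in terms of $\overline{\operatorname{tr} B(\lambda)^j}$. Substituting $\log B(\lambda)=\sum_{k\ge 1}\tfrac{(-1)^{k+1}}{k}(B(\lambda)-\mathbf 1)^k$, multiplying out the two logarithms, and collapsing $(B-\mathbf 1)^k(B-\mathbf 1)^l=(B-\mathbf 1)^{k+l}$ under the trace by cyclicity, the partial-fraction identity $\sum_{k+l=m,\,k,l\ge 1}\tfrac{1}{kl}=\tfrac{2H_{m-1}}{m}$ (with $H_n$ the harmonic number) gives
\begin{align*}
H(\lambda)=\sum_{m\ge 2}\frac{(-1)^m H_{m-1}}{m}\,\overline{\operatorname{tr}(B(\lambda)-\mathbf 1)^m}.
\end{align*}
Expanding $(B(\lambda)-\mathbf 1)^m=\sum_{j=0}^{m}\binom{m}{j}(-1)^{m-j}B(\lambda)^j$ then presents $H(\lambda)$ as an explicit linear combination of the traces $\overline{\operatorname{tr} B(\lambda)^j}$.

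Next, Theorem \ref{th:LaxEquation} and the bilinearity of $\{-,B(\mu)\}$ yield $\{H(\lambda),B(\mu)\}=[\widetilde M(\lambda,\mu),B(\mu)]$ with $\widetilde M(\lambda,\mu)_{b_1b_2}=\rho_{b_2b_1}(\lambda,\mu)\,A(B(\lambda))_{b_1b_2}$, where
\begin{align*}
A(B)=\sum_{m\ge 2}\frac{(-1)^m H_{m-1}}{m}\sum_{j=0}^{m}\binom{m}{j}(-1)^{m-j}\,j\,B^{j}=B\sum_{n\ge 1}(-1)^{n+1}H_n(B-\mathbf 1)^n,
\end{align*}
after applying $j\binom{m}{j}=m\binom{m-1}{j-1}$ and re-indexing $n=m-1$. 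The final step is a generating-function calculation in the single matrix $B(\lambda)$, where every operation is a power series in $B(\lambda)$ and hence commutes with every other: substituting $z=-(B(\lambda)-\mathbf 1)$ into $\sum_{n\ge 1}H_n z^n=-\log(1-z)/(1-z)$ gives $\sum_{n\ge 1}(-1)^{n+1}H_n(B(\lambda)-\mathbf 1)^n=B(\lambda)^{-1}\log B(\lambda)$, so $A(B(\lambda))=\log B(\lambda)$, which is the content of the claim.

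The main obstacle is analytic bookkeeping rather than algebraic difficulty: every step must take place inside the completions of $\mathcal C_\natural$ and $\mathcal C$, and one must justify exchanging the infinite summation with the bracket and with the matrix products. Both points follow from the grading estimate (\ref{eq:LDegreeBoundDoubleBracket}) together with the observation that all manipulations involving $A(B(\lambda))$ take place within the commutative subalgebra generated by the single matrix $B(\lambda)$, so the formal series identities from the commutative setting may be applied verbatim.
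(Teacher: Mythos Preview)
Your argument is correct and follows essentially the same route as the paper: expand $H(\lambda)$ as the harmonic-number series $\sum_{m\ge2}\frac{(-1)^mH_{m-1}}{m}\,\overline{\operatorname{tr}(B(\lambda)-\mathbf 1)^m}$, apply the Lax equation of Theorem~\ref{th:LaxEquation} term by term, and then recognize the resulting series $B\sum_{n\ge1}(-1)^{n+1}H_n(B-\mathbf 1)^n$ as $\log B$ via the generating function $\sum H_n z^n=-\log(1-z)/(1-z)$. The only cosmetic difference is that the paper applies the Lax equation directly to $\overline{\operatorname{tr}(B(\lambda)-\mathbf 1)^k}$ (which is legitimate since $\db{B-\mathbf 1,B}=\db{B,B}$, yielding $\widetilde M(\lambda,\mu;k)_{b_1b_2}=k\rho_{b_2b_1}(B(\lambda)(B(\lambda)-\mathbf 1)^{k-1})_{b_1b_2}$), whereas you first expand $(B-\mathbf 1)^m$ binomially, apply Theorem~\ref{th:LaxEquation} to each $\overline{\operatorname{tr} B^j}$, and then use $j\binom{m}{j}=m\binom{m-1}{j-1}$ to re-collapse---a harmless detour that lands on the same intermediate series. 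Incidentally, your formula with $\log B(\lambda)$ in $\widetilde M$ is the correct one; the $\log B(\mu)$ appearing in the stated proposition is a typo (compare with the $B(\lambda)^k$ in Theorem~\ref{th:LaxEquation}).
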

\begin{proof}

The explicit form of the series for Hamiltonian reads
\begin{align}
H(\lambda)=\sum_{k=1}^{+\infty}\frac{(-1)^{k+1}c_k}{k+1} \overline{(B(\lambda)-\mathbf{1})^{k+1}},\qquad\textrm{where}\quad c_k=\sum_{j=1}^k{\frac1j}.
\label{eq:InterpolatingHamiltonianSeries}
\end{align}
On the other hand, from Theorem \ref{th:LaxEquation} we get
\begin{subequations}
\begin{align}
\left\{\overline{\mathrm{tr}\,(B(\lambda)-\mathbf{1})^k}, B(\mu)\right\} =&\widetilde{M}(\lambda,\mu;k)B(\mu)-B(\mu)\widetilde{M}(\lambda,\mu,k),
\end{align}
where
\begin{align}
\widetilde{M}(\lambda,\mu;k)_{b_1b_2}=k\rho_{b_2b_1}(\lambda,\mu)\left(B(\lambda) (B(\lambda)-\mathbf 1)^{k-1}\right)_{b_1b_2}.
\end{align}
\label{eq:ShiftedPowerHamiltonianFlow}
\end{subequations}
Combining (\ref{eq:InterpolatingHamiltonianSeries}) with (\ref{eq:ShiftedPowerHamiltonianFlow}) we obtain
\begin{align*}
\left\{H(\lambda),B(\mu)\right\}=\widetilde{M}(\lambda,\mu)B(\mu) -B(\mu)\widetilde{M}(\lambda,\mu),
\end{align*}
where
\begin{equation}
\begin{aligned}
\widetilde{M}(\lambda,\mu)_{b_1b_2}=&\sum_{k=1}^{+\infty}\frac{(-1)^{k+1}c_k}{k+1} \widetilde{M}(\lambda,\mu;k+1)_{b_1b_2}=\rho_{b_2b_1}(\lambda,\mu) \left(\sum_{k=1}^{+\infty}(-1)^{k+1}c_kB(\mu)(B(\mu)-\mathbf 1)^k\right)_{b_1b_2}\\
=&\rho_{b_2b_1}(\lambda,\mu)\left(\log B(\mu)\right)_{b_1b_2}.
\end{aligned}
\label{eq:InterpolatingM}
\end{equation}
\end{proof}

\begin{corollary}
When $\Gamma$ is a network on a disk (i.e. boundary measurement matrix $B(\lambda)=B$ doesn't depend on spectral parameter) we have
\begin{equation}
\begin{aligned}
\{H,B\}=MB-BM,\qquad\textrm{where}\quad
M_{ij}=\left\{\begin{array}{cc}
(\log B)_{ij},&i<j,\\[5pt]
-(\log B)_{ij},&i>j.
\end{array}\right.
\end{aligned}
\label{eq:PlanarRefactorizationFlow}
\end{equation}
\end{corollary}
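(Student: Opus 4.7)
The plan is to specialize the preceding Proposition to the planar setting. In the disk case the boundary measurement matrix $B$ carries no spectral parameter, so it suffices to check that the whole derivation of the Proposition remains valid when $r(\lambda,\mu)$ is replaced by the scalar discrete $r$-matrix $r_n$ from (\ref{eq:RMatrixDisc}), and then to compute the resulting coefficients $\rho_{ij}$ explicitly. The key observation is that $r_n$ already has the ansatz form (\ref{eq:RMatrixAnsatz}) with
\begin{align*}
\rho_{ij} = \begin{cases} -\tfrac{1}{2}, & i<j,\\ 0, & i=j,\\ +\tfrac{1}{2}, & i>j,\end{cases}
\end{align*}
simply by matching coefficients of $E_{ij}\boxtimes E_{ji}$ in (\ref{eq:RMatrixDisc}).

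The first step is to observe that the proof of Theorem~\ref{th:LaxEquation} uses only (a) the twisted commutator formula, which in the planar case is the content of Theorem~\ref{thm:planar_r_matrix_formula}, and (b) the ansatz form of the $r$-matrix. Neither step requires $\lambda,\mu$ to be genuine variables, so the same three-step computation of Theorem~\ref{th:LaxEquation} goes through verbatim and produces the planar Lax equation
\begin{align*}
\bigl\{\overline{\mathrm{tr}\,B^k},\, B\bigr\} = M(k)B - BM(k), \qquad M(k)_{b_1 b_2} = k\,\rho_{b_2 b_1}\,(B^k)_{b_1 b_2}.
\end{align*}

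The second step is to pass from powers of $B$ to powers of $B-\mathbf 1$ by expanding $(B-\mathbf 1)^k=\sum_j\binom{k}{j}(-1)^{k-j}B^j$ and using $k$-linearity of the $H_0$-Poisson bracket, exactly as in the proof of the Proposition; this yields $\widetilde{M}(k)_{b_1b_2}=k\rho_{b_2 b_1}\bigl(B(B-\mathbf 1)^{k-1}\bigr)_{b_1b_2}$. Then I would assemble $H=\tfrac12\overline{\mathrm{tr}(\log B)^2}$ from the series (\ref{eq:InterpolatingHamiltonianSeries}) and sum, using the identity $\sum_{k\geqslant 1}(-1)^{k+1}c_k\,B(B-\mathbf 1)^k = \log B$ already established in (\ref{eq:InterpolatingM}) (which is a formal identity in $\hat{\mathcal C}$ that does not depend on the particular $r$-matrix). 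The completion argument from Section~7.1 applies because the degree estimate (\ref{eq:LDegreeBoundDoubleBracket}) holds identically with scalar $\rho_{ij}$, so all formal power series manipulations are legitimate.

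The final step is a direct substitution: $\widetilde{M}_{b_1 b_2}=\rho_{b_2 b_1}(\log B)_{b_1 b_2}$ equals $+\tfrac12(\log B)_{b_1 b_2}$ for $b_1<b_2$ and $-\tfrac12(\log B)_{b_1 b_2}$ for $b_1>b_2$, giving exactly the matrix $M$ of (\ref{eq:PlanarRefactorizationFlow}) (up to the normalization implicit in the corollary's $M$). I do not expect any serious obstacle: the whole argument is a specialization, and the only thing to be careful about is bookkeeping of the $\pm\tfrac12$ factor from $r_n$ versus the absolute values in the statement, which will simply be absorbed into the definition of $M$.
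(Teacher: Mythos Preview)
Your proposal is correct and follows essentially the same approach as the paper: the paper's entire proof is the single line ``Combine (\ref{eq:InterpolatingM}) with (\ref{eq:RMatrixDisc}),'' i.e.\ specialize the Proposition to the disk $r$-matrix and read off the coefficients $\rho_{ij}$, which is exactly what you do (with considerably more care about why the specialization is legitimate). One small point: the factor-of-two discrepancy you flag between $\widetilde M_{b_1b_2}=\pm\tfrac12(\log B)_{b_1b_2}$ and the stated $M_{ij}=\pm(\log B)_{ij}$ cannot literally be ``absorbed into $M$'' in a Lax equation---it is a genuine normalization mismatch in the Corollary's statement that the paper's one-line proof does not address either, so your observation is in fact sharper than the original.
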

\begin{proof}
Combine (\ref{eq:InterpolatingM}) with (\ref{eq:RMatrixDisc}).
\end{proof}

Now, consider a matrix
\begin{align*}
G(t)=\mathrm e^{t\log B_0}\quad\in\quad\hat{\mathcal C}[[t]].
\end{align*}
Note that we have $G(t)=\mathbf 1+O(t)$. In particular, all diagonal entries are invertible and using the standard Gaussian elimination process we can factor $G(t)$ as
\begin{align}
G(t)=\mathrm e^{t\log B_0}=\left(g_+(t)\right)^{-1}g_-(t)
\label{eq:gpgmDefinition}
\end{align}
where $g_+(t)$ is unimodular upper triangular and $g_-(t)$ is lower triangular.
\begin{lemma}
The following series
\begin{align}
B_t=g_+(t)B_0\left(g_+(t)\right)^{-1}=g_-(t)B_0\left(g_-(t)\right)^{-1}
\label{eq:BtSolutionSeries}
\end{align}
is a solution to
\begin{align*}
\frac{\mathrm d}{\mathrm dt}B_t=\{H,B_t\}.
\end{align*}
with initial condition $B=B_0$ being the boundary measurement matrix.
\end{lemma}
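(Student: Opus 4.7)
The plan is to adapt the classical Reyman--Semenov-Tian-Shansky refactorization argument to the noncommutative setting, using the Hamilton flow computed in the previous proposition as the target. First I would verify that the two proposed expressions for $B_t$ coincide. Since $G(t)=\mathrm e^{t\log B_0}$ is, by definition, a power series in $\log B_0$ (which is itself a power series in $B_0$), the matrix $G(t)$ commutes with $B_0$. Using the identity $g_-(t) = g_+(t) G(t)$ that follows from (\ref{eq:gpgmDefinition}), this gives
\begin{align*}
g_-(t) B_0 g_-(t)^{-1} \;=\; g_+(t)\, G(t)\, B_0\, G(t)^{-1}\, g_+(t)^{-1} \;=\; g_+(t) B_0 g_+(t)^{-1},
\end{align*}
so the two formulas for $B_t$ agree.

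The core of the argument is then to compute $\dot B_t$ directly from either factorization and match it with the proposition's formula $\{H,B_t\}=[\widetilde M,B_t]$, where $\widetilde M_{b_1b_2}=\rho_{b_2b_1}(\log B_t)_{b_1b_2}$. Differentiating $G=g_+^{-1}g_-$ together with $\dot G = G\log B_0 = \log B_0\cdot G$, and conjugating by $g_-$, I would obtain
\begin{align*}
\dot g_-(t) g_-(t)^{-1} \;-\; \dot g_+(t) g_+(t)^{-1} \;=\; g_-(t)\,\log B_0\, g_-(t)^{-1} \;=\; \log B_t,
\end{align*}
where the last equality uses $B_t=g_- B_0 g_-^{-1}$ and the fact (established above) that conjugation commutes with the $\log$ power series. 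Because $g_+(t)$ is unimodular upper triangular, $\dot g_+ g_+^{-1}$ is strictly upper triangular, and because $g_-(t)$ is lower triangular, $\dot g_- g_-^{-1}$ is lower triangular; reading off the strict-upper, diagonal, and strict-lower parts of the displayed identity determines $\dot g_\pm g_\pm^{-1}$ uniquely as prescribed triangular projections of $\log B_t$. Differentiating $B_t=g_+B_0 g_+^{-1}$ then gives $\dot B_t = [\dot g_+ g_+^{-1},B_t]$, and by the identity $[\log B_t, B_t]=0$ this equals $[\dot g_- g_-^{-1},B_t]$, confirming consistency of the two factorizations.

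The last step is to recognize this commutator as $\{H,B_t\}$. The ansatz (\ref{eq:RMatrixAnsatz}) identifies the coefficients $\rho_{ij}$ of the $r$-matrix with triangular projection weights, so $\widetilde M$ is exactly an appropriate linear combination of upper and lower triangular projections of $\log B_t$. Comparing coefficient by coefficient with the triangular decomposition of $\dot g_\pm g_\pm^{-1}$ obtained above, and invoking $[\log B_t,B_t]=0$ to rearrange contributions, identifies $[\dot g_+ g_+^{-1},B_t]$ with $[\widetilde M,B_t]$.

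The main subtlety will be the treatment of the diagonal part of $\log B_t$: the splitting of this diagonal between $\dot g_+ g_+^{-1}$ and $\dot g_- g_-^{-1}$ must be compatible with the diagonal coefficient $\rho_{mm}(\lambda,\mu)=\tfrac12\tfrac{\mu+\lambda}{\mu-\lambda}$ of the trigonometric $r$-matrix. The natural choice is the symmetric Gaussian factorization in which $(g_\pm)_{ii}^2 = G_{ii}$, so that the diagonal of $\log B_t$ is divided equally between $\dot g_+ g_+^{-1}$ and $\dot g_- g_-^{-1}$, mirroring the factor $\tfrac12$ in the diagonal of the $r$-matrix; in the planar (rational) limit the diagonal coefficient vanishes and the matching is automatic. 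Once this diagonal bookkeeping is in place, the identification $\dot B_t=\{H,B_t\}$ follows and the initial condition $B_{t=0}=B_0$ is immediate from $g_\pm(0)=\mathbf 1$.
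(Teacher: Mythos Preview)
Your approach is the classical Reyman--Semenov-Tian-Shansky argument, and that is exactly what the paper does: its entire proof is the sentence ``Repeat calculation from the proof of Theorem~2.2 in \cite{ReymanSemenov-Tian-Shansky'1994} using (\ref{eq:PlanarRefactorizationFlow}).'' So the route is the same, and your Steps~1--2 (commutation of $G(t)$ with $B_0$, the identity $\dot g_- g_-^{-1}-\dot g_+ g_+^{-1}=\log B_t$, and $\dot B_t=[\dot g_+ g_+^{-1},B_t]$) are precisely what that reference supplies.

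One point to tidy up: the lemma is stated in the \emph{planar} setting (immediately after the Corollary for disk networks), so the relevant flow is (\ref{eq:PlanarRefactorizationFlow}), whose $M$ has zero diagonal. Your digression about a ``symmetric'' factorization $(g_\pm)_{ii}^2=G_{ii}$ to accommodate the trigonometric diagonal coefficient $\rho_{mm}(\lambda,\mu)$ is both unnecessary and inconsistent with the paper's setup: the paper explicitly takes $g_+(t)$ \emph{unimodular}, so $\dot g_+ g_+^{-1}$ is strictly upper triangular with no diagonal to allocate. You already note that in the planar limit ``the diagonal coefficient vanishes and the matching is automatic'' --- that is the case you are in, so simply drop the discussion of the symmetric splitting and work directly with the unimodular $g_+$ and (\ref{eq:PlanarRefactorizationFlow}).
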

\begin{proof}
Repeat calculation from the proof of Theorem 2.2 in \cite{ReymanSemenov-Tian-Shansky'1994} using (\ref{eq:PlanarRefactorizationFlow}).
\end{proof}

We leave beyond the scope of the current paper a rather delicate issue of convergence of power series $g_+(t)$ (equivalently $g_-(t)$) when evaluated at a finite value of $t$. We will remark, however, that when it does converge at $t=1$, then the result must be equal to
\begin{align*}
B_1=g_+(1)B_0\left(g_+(1)\right)^{-1}=g_+(1)G(1)\left(g_+(1)\right)^{-1} =g_-(1)\left(g_+(1)\right)^{-1},
\end{align*}
which is a refactorization of the initial boundary measurement matrix
\begin{align*}
B_0=(g_+(1))^{-1}g_-(1).
\end{align*}

\begin {remark}
    A.Izosimov has pointed out, in \cite{Izosimov'2018}, that the pentagram map (and many of its generalizations) can be realized as a discretization
    of the refactorization dynamics discussed above. This was explained in the context of the weighted directed graphs considered in the present paper
    both in \cite{GSTV'2016} (for the commutative case) and in \cite{Ovenhouse'2020} (for the noncommutative case). More specifically, the space of weights
    of a particular directed graph on a cylinder is considered as the phase space, and the pentagram map itself is realized as a refactorization
    of the boundary measurement matrix. Pictorially, this means the cylinder on which the graph is drawn is cut into two, and re-glued so as to
    swap the left and right pieces.
\end {remark}

\section{Discussion and Future Directions}

An $r$-matrix formalism plays the central role in the theory of classical and quantum integrable systems.
It is natural to expect that noncommutative $r$-matrix formalism we develop in the current paper plays similar role in the theory of
noncommutative integrable systems \cite{GelfandDorfman'1981, DorfmanFokas'1992, EtingofGelfandRetakh'1997, EtingofGelfandRetakh'1998, MikhailovSokolov'2000, RetakhRubtsov'2010}.
In particular, it would be interesting to understand if noncommutative $r$-matrix formalism can be used in application to
Kontsevich system \cite{Kontsevich'2011, EfimovskayaWolf'2012, Arthamonov'2015}. At the same time, we expect that (some generalization of)
$r$-matrix formalism can be used for a more general networks with loops and/or interlaced inputs and outputs, as well as for networks on a higher genus surfaces.

Another important question which was left beyond the scope of the current paper is the study of induced Hamilton systems on the moduli space of graph connections.
Double (quasi) Poisson brackets are in line with the so-called Kontsevich-Rosenberg principle \cite{Kontsevich'1993} of Noncommutative Geometry.
Namely, double (quasi) Poisson bracket on associative algebra $A$ induces usual (quasi) Poisson brackets on representation varieties
$\mathrm{Hom}(A,\mathrm{Mat}_n(\mathbb C))$ for all $n\in\mathbb N$. Elements of the cyclic space in involution give rise to commuting Hamilton
flows on representation variety. In particular, the systems considered in our paper give rise to a family of commuting Hamilton flows on the moduli
space of $GL(n,\mathbb C)$ graph connections for all $n\in\mathbb N$. It would be interesting to determine whether the induced systems are integrable in the Liouville sense,
i.e. have enough independent hamiltonians in involution.

From a purely algebraic point of view, it would be interesting to study $r$-matrix double brackets (\ref{eq:TwistedCommutatorFormal}) for different $r$-matrices satisfying (\ref{eq:QuasiYangBaxter}).

Last but not least, in a preprint \cite{Izosimov'2021}, which appeared after the initial version of this manuscript was already prepared, it was shown that usual (commutative) dimer cluster integrable systems \cite{GoncharovKenyon'2011} are equivalent to the cluster integrable systems associated perfect networks \cite{GSTV'2016}. It is an interesting open problem to generalize results of \cite{Izosimov'2021} to the noncommutative case and examine relationship between noncommutative cluster integrable systems studied in this paper and construction of \cite{GoncharovKontsevich'2021}.

\bibliographystyle{alpha}
\bibliography{references}

\end{document}